\mathchardef\ordinarycolon\mathcode`\:
\def\vcentcolon{\mathrel{\mathop\ordinarycolon}}
\theoremstyle{plain}
\newtheorem{theorem}{Theorem}[section]
\newtheorem{lemma}[theorem]{Lemma}
\newtheorem{proposition}[theorem]{Proposition}
\newtheorem{corollary}[theorem]{Corollary}
\theoremstyle{definition}
\newtheorem{definition}[theorem]{Definition}
\newtheorem{example}[theorem]{Example}
\newtheorem{notation}[theorem]{Notation}
\newtheorem{remark}[theorem]{Remark}
\let\origthebibliography=\thebibliography
\def\thebibliography{\renewcommand{\section}[2]{}\origthebibliography}
\newcommand{\alg}{\mathcal{A}}
\newcommand{\Aut}{\mathop{\mathrm{Aut}}}
\newcommand{\blg}{\mathcal{B}}
\newcommand{\bop}[2]%
{\ifthenelse{\equal{#2}{}}{\bopp(#1)}{\bopp(#1;#2)}}
\newcommand{\bopp}{\mathcal{B}}
\newcommand{\bra}[1]{\langle#1|}
\newcommand{\bt}{\mathbf{t}}
\newcommand{\cb}{\text{cb}}
\newcommand{\comp}{\mathbin{\circ}}
\newcommand{\ctimes}{\mathbin{\bar{\otimes}}}
\newcommand{\dom}{\mathop{\mathrm{dom}}}
\newcommand{\dyad}[2]{|#1\rangle\langle#2|}
\newcommand{\elltwo}{L^2( \R_+; \mul )}
\newcommand{\evec}[1]{\varepsilon(#1)}
\newcommand{\evecs}{\mathcal{E}}
\newcommand{\expn}{\mathbb{E}}
\newcommand{\fock}{\mathcal{F}}
\newcommand{\hilb}{\mathsf{H}}
\newcommand{\hlf}{\mbox{$\frac12$}}
\newcommand{\id}{1}
\newcommand{\ini}{\mathsf{h}}
\newcommand{\jt}{\bar{\jmath}}
\newcommand{\ket}[1]{|#1\rangle}
\newcommand{\lin}{\mathop{\mathrm{lin}}}
\newcommand{\matten}{\mathbin{\otimes_{\mathrm{M}}}}
\newcommand{\mul}{\mathsf{k}}
\newcommand{\mmul}{{\wh{\mul}}}
\renewcommand{\Pr}{\mathbb{P}}
\newcommand{\rd}{\mathrm{d}}
\newcommand{\simp}{D}
\newcommand{\supp}{\mathop{\mathrm{supp}}\nolimits}
\newcommand{\std}{\,\rd}
\newcommand{\tfn}[1]{\mathbbm{1}_{#1}}
\newcommand{\tr}{\mathop{\mathrm{tr}}\nolimits}
\newcommand{\vac}{\omega}
\newcommand{\wh}[1]{\widehat{#1}}
\newcommand{\wt}[1]{\widetilde{#1}}
\newcommand{\C}{\mathbb{C}}
\newcommand{\I}{\mathrm{i}}
\newcommand{\N}{\mathbb{N}}
\newcommand{\R}{\mathbb{R}}
\newcommand{\T}{\mathbb{T}}
\newcommand{\Z}{\mathbb{Z}}
\renewcommand{\ge}{\geqslant}
\renewcommand{\le}{\leqslant}
\newcommand{\etc}{\textit{et cetera}}
\newcommand{\eg}{\textit{e.g.},\ }
\newcommand{\ie}{\textit{i.e.},\ }
\newcommand{\ds}{\displaystyle}
\numberwithin{equation}{section}
\begin{document}

\begin{center}
{\LARGE An algebraic construction of quantum\\[0.25ex]
flows with unbounded generators}
\vspace*{1ex}
\begin{multicols}{2}
{\large Alexander C.~R.~Belton}\\[0.5ex]
{\small Department of Mathematics and Statistics\\
Lancaster University, United Kingdom\\[0.5ex]
\textsf{a.belton@lancaster.ac.uk}}
\columnbreak

{\large Stephen J.~Wills}\\[0.5ex]
{\small School of Mathematical Sciences\\
University College Cork, Ireland\\[0.5ex]
\textsf{s.wills@ucc.ie}}
\end{multicols}
{\small \today}
\end{center}

\begin{abstract}
{\small \noindent
It is shown how to construct $*$-homomorphic quantum stochastic Feller
cocycles for certain unbounded generators, and so obtain dilations of
strongly continuous quantum dynamical semigroups on $C^*$~algebras;
this generalises the construction of a classical Feller process and
semigroup from a given generator. The construction is possible
provided the generator satisfies an invariance property for some dense
subalgebra $\alg_0$ of the $C^*$~algebra $\alg$ and obeys the
necessary structure relations; the iterates of the generator, when
applied to a generating set for $\alg_0$, must satisfy a growth
condition. Furthermore, it is assumed that either the
subalgebra~$\alg_0$ is generated by isometries and $\alg$ is
universal, or $\alg_0$ contains its square roots. These conditions are
verified in four cases: classical random walks on discrete groups,
Rebolledo's symmetric quantum exclusion processes and flows on the
non-commutative torus and the universal rotation algebra.}
\end{abstract}

{\small\textit{Key words:} quantum dynamical semigroup; quantum Markov
semigroup; CPC semigroup; strongly continuous semigroup; semigroup
dilation; Feller cocycle; higher-order It\^{o} product formula; random
walks on discrete groups; quantum exclusion process; non-commutative
torus}

{\small\textit{MSC 2000:}
81S25 (primary);    
46L53,              
46N50,              
47D06,              
60J27 (secondary).} 

\section{Introduction}

The connexion between time-homogeneous Markov processes and
one-parameter contraction semigroups is an excellent example of the
interplay between probability theory and functional analysis. Given a
measurable space $( E, \mathcal{E} )$, a \emph{Markov semigroup} $T$
with state space $E$ is a family $( T_t )_{t \ge 0}$ of positive
contraction operators on $L^\infty( E )$ such that
\[
T_{s + t} = T_s \comp T_t \quad \text{for all } s, t \ge 0 %
\qquad \text{and} \qquad T_0 f = f %
\quad \text{for all } f \in L^\infty( E );
\]
the semigroup is \emph{conservative} if $T_t 1 = 1$ for all $t \ge 0$.
Typically, such a semigroup is defined by setting
\[
( T_t f )( x ) = \int_E f( y ) p_t( x, \rd y )
\]
for a family of transition kernels
$p_t : E \times \mathcal{E} \to [ 0, 1 ]$. Given a time-homogeneous
Markov process $( X_t )_{t \ge 0}$ with values in $E$, the associated
Markov semigroup is obtained from the prescription
\begin{equation}\label{eqn:markov}
( T_t f )( x ) = \expn[ f( X_t ) | X_0 = x ],
\end{equation}
so that $p_t( x, A ) = \mathbb{P}( X_t \in A | X_0 = x )$ is the
probability of moving from $x$ into $A$ in time~$t$. When the state
space~$E$ is a locally compact Hausdorff space we may specialise
further: a \emph{Feller semigroup} is a Markov semigroup $T$ such that
\[
T_t\bigl( C_0 ( E ) \bigr) \subseteq C_0 ( E ) %
\quad \text{for all } t \ge 0 \quad \text{ and } \quad %
\| T_t f - f \|_\infty \to 0 \text{ as } t \to 0 %
\quad \text{ for all } f \in C_0 ( E ).
\]
Any sufficiently nice Markov process, such as a L\'{e}vy process,
gives rise to a Feller semigroup; conversely, if $E$ is separable then
any Feller semigroup gives rise to a Markov process with
\textit{c\`{a}dl\`{a}g} paths.

A celebrated theorem of Gelfand and Naimark states that every
commutative $C^*$~algebra is of the form $C_0( E )$ for some locally
compact Hausdorff space $E$. Thus the first step in generalising
Feller semigroups, and so Markov processes, to a non-commutative
setting is to replace $C_0( E )$ with a general
$C^*$~algebra~$\alg$. Moreover, a strengthening of positivity, called
\emph{complete positivity}, is required for a satisfactory theory: a
map $\phi : \alg \to \blg$ between $C^*$~algebras is completely
positive if the ampliation
\[
\phi^{(n)} : M_n( \alg ) \to M_n( \blg ); \ %
( x_{i j} ) \mapsto \bigl( \phi( x_{i j} ) \bigr)
\]
is positive for all $n \ge 1$. This property is justified on physical
grounds and is equivalent to the usual form of positivity when either
algebra $\alg$ or $\blg$ is commutative. The resulting object, a
semigroup of completely positive contractions on a $C^*$~algebra
$\alg$, is known as a \emph{quantum dynamical semigroup} or, when
conservative, a \emph{quantum Markov semigroup}; such semigroups are
used to describe the evolution of quantum-mechanical systems which
interact irreversibly with their environment.

Any strongly continuous quantum dynamical semigroup $T$ is
characterised by its infinitesimal generator $\tau$, the closed
linear operator such that
\[
\dom \tau = \Bigl\{ f \in \alg : %
\lim_{t \to 0} \frac{T_t f - f}{t} \text{ exists} \Bigr\} \quad %
\text{and} \quad \tau f = \lim_{t \to 0} \frac{T_t f - f}{t}.
\]
For a Feller semigroup, the form of the generator $\tau$ may reveal
properties of the corresponding process; for instance, a classical
L\'{e}vy process may be specified, \textit{via} the
L\'{e}vy--Khintchine formula, by the characteristics of its generator,
\textit{viz.}\ a drift vector, a diffusion matrix describing the
Brownian-motion component and a L\'{e}vy measure characterising its
jumps. If we start with a putative generator $\tau$ then
operator-theoretic methods may be used to construct the semigroup,
although there are often considerable analytical challenges to be
met. Verifying that $\tau$ satisfies the hypotheses of the
Hille--Yosida theorem, the key analytical tool for this construction,
is often difficult. In this paper we provide, for a suitable class of
generators, another method of constructing quantum dynamical
semigroups and the corresponding non-commutative Markov processes.

To understand how the relationship between semigroups and Markov
processes generalises to the non-commutative framework, recall first
that any locally compact Hausdorff space $E$ may be made compact by
adjoining a point at infinity, which corresponds to adding an identity
to the algebra $C_0( E )$ or adding a coffin state for an $E$-valued
Markov process; it is sufficient, therefore, to restrict our attention
to compact Hausdorff spaces or, equivalently, unital
$C^*$~algebras. The correct analogue of an $E$-valued random variable
$X$ is then a unital $*$-homomorphism $j$ from $\alg$ to some unital
$C^*$~algebra $\blg$; classically, $j$ is the map $f \mapsto f \comp
X$, where $f \in \alg = C_0( E )$ and~$\blg$ is~$L^\infty( \Pr )$ for
some probability measure $\Pr$. A family of unital $*$-homomorphisms
$( j_t : \alg \to \blg )_{t \ge 0}$, \ie a non-commutative stochastic
process, is said to \emph{dilate} the quantum dynamical semigroup~$T$
if $\alg$ is a subalgebra of $\blg$ and $\expn \comp j_t = T_t$ for
all $t \ge 0$, where~$\expn$ is a conditional expectation from $\blg$
to $\alg$; the relationship to \eqref{eqn:markov} is clear. Thus
finding a dilation for a given semigroup is analogous to constructing
a Markov process from a family of transition kernels.

The tool used here for constructing semigroups and their dilations is
a stochastic calculus: the quantum stochastic calculus introduced by
Hudson and Parthasarathy in their 1984 paper \cite{HuP84}. In its
simplest form, this is a non-commutative theory of stochastic
integration with respect to three operator martingales which
correspond to the creation, annihilation and gauge processes of
quantum field theory. It generalises simultaneously the It\^{o}--Doob
$L^2$ integral with respect to either Brownian motion or the
compensated Poisson process; as emphasised by Meyer \cite{Mey95} and
Attal \cite{Att98}, the $L^2$~theory of any normal martingale having
the chaotic-representation property, such as Brownian motion, the
compensated Poisson process or Az\'{e}ma's martingale, gives a
classical probabilistic interpretation of Boson Fock space, the
ambient space of quantum stochastic calculus.

We develop below new techniques for obtaining $*$-homomorphic
solutions to the Evans--Hudson quantum stochastic differential
equation (QSDE)
\begin{equation}\label{eqn:EHqsde}
\rd j_t = %
( j_t \otimes \iota_{\bop{\mmul}{}} ) \comp \phi \std \Lambda_t,
\end{equation}
where the solution $j_t$ acts on a unital $C^*$~algebra $\alg$.
In this way, we obtain the process $j$ and the quantum dynamical
semigroup $T$ simultaneously. The components of the \emph{flow
generator}~$\phi$ include $\tau$, the restriction of a semigroup
generator, and $\delta$, a bimodule derivation, which are related to
one another through the Bakry--\'{E}mery \textit{carr\'{e} du champ}
operator: see Remark~\ref{rem:BEcdc}. If~$\alg$ is commutative then,
by Theorem~\ref{thm:abelian}, the process $j$ is classical, in the
sense that the algebra generated by
$\{ j_t( a ): t \ge 0, \ a \in \alg \}$ is also commutative.

The recent expository paper \cite{Bia10} on quantum stochastic
methods, written for an audience of probabilists, includes
Parthasarathy and Sinha's method \cite{PaS90} for constructing
continuous-time Markov chains with finite state spaces by solving
quantum stochastic differential equations. To quote Biane,
\begin{quote}
``It may seem strange to the classical probabilist to use
noncommutative objects in order to describe a perfectly commutative
situation, however, this seems to be necessary if one wants to deal
with processes with jumps \ldots\ The right mathematical notion
\ldots, which generalizes to the noncommutative situation, is that of
a derivation into a bimodule \ldots\ Using this formalism, we can use
the Fock space as a uniform source of noise, and construct general
Markov processes (both continuous and discontinuous) using stochastic
differential equations.''.
\end{quote}
The results herein give a further illustration of this philosophy.

The use of quantum stochastic calculus to produce dilations has now
been studied for nearly thirty years. Most results, by Hudson and
Parthasarathy, Fagnola, Mohari, Sinha \etc, are obtained in the case
that $\alg = \bop{\ini}{}$ by first solving an operator-valued QSDE,
the Hudson--Parthasarathy equation, to obtain a unitary process~$U$,
and defining~$j$ through conjugation by $U$; see~\cite{Fag99} and
references therein. The corresponding theory for the Heisenberg
rather than the Schr\"{o}dinger viewpoint, solving the Evans--Hudson
equation~\eqref{eqn:EHqsde}, has mainly been developed under the
standing assumption that the generator $\phi$ is completely bounded,
which is necessary if the corresponding semigroup~$T$ is norm
continuous \cite{LiW00a}. When one deviates from this assumption,
which is analytically convenient but very restrictive, there are few
results. The earliest general method is due to Fagnola and Sinha
\cite{FaS93}, with later results by Goswami, Sahu and Sinha for a
particular model \cite{GSS05} and a more general method developed by
Goswami and Sinha in \cite{SiG07}. Another approach based on semigroup
methods has yet to yield existence results for the Evans--Hudson
equation: see \cite{AcK01} and \cite{LiW12}.

Our method here has an attractive simplicity, imposing minimal
conditions on the generator $\phi$. It must be a $*$-linear map
\[
\phi : \alg_0 \to \alg_0 \otimes \bop{\C \oplus \mul}{},
\]
where $\alg_0$ is a dense $*$-subalgebra of the unital $C^*$~algebra
$\alg \subseteq \bop{\ini}{}$ which contains $\id = \id_\ini$
and $\mul$ is a Hilbert space, called the \emph{multiplicity space},
the dimension of which measures the amount of noise available in the
system. This incorporates an assumption that, if $\phi$ is viewed as a
matrix of maps, its components leave~$\alg_0$ invariant, a hypothesis
also used in~\cite{FaS93}. Furthermore, $\phi$ must be such that
$\phi( \id ) = 0$ and the \emph{first-order It\^o product formula}
holds:
\begin{equation}\label{eqn:firstIto}
\phi( x y ) = \phi( x ) ( y \otimes \id_\mmul ) + %
( x \otimes \id_\mmul ) \phi( y ) + %
\phi( x ) \Delta \phi( y )
\qquad \text{for all } x, y \in \alg_0,
\end{equation}
where $\mmul := \C \oplus \mul$ and
$\Delta \in \alg_0 \otimes \bop{\mmul}{}$ is the orthogonal projection
onto $\ini \ctimes \mul$. Both these conditions are known to be
necessary if $\phi$ is to generate a family of unital
$*$-homomorphisms. Finally, a growth bound must be established for the
iterates of $\phi$ applied to elements taken from a suitable subset of
$\alg_0$.

Our approach is an elementary one for those adept in quantum
stochastic calculus, relying on familiar techniques such as
representing the solution to the Evans--Hudson QSDE as a sum of
quantum Wiener integrals. An essential tool is the higher-order It\^o
product formula, presented in Section~\ref{sec:hopf}. This formula was
first stated, for finite-dimensional noise, in \cite{CEH95}, was
proved for that case in \cite{HPu98} and reached its definitive form
in~\cite{LiW03}. In that last paper it was shown
that~\eqref{eqn:firstIto} is but the first of a sequence of identities
that must be satisfied in order to show that the solution $j$ of the
QSDE is weakly multiplicative. However, there are many situations in
which the validity of~\eqref{eqn:firstIto} implies that the other
identities hold \cite[Corollary~4.2]{LiW03}, and this is the case for
$\phi$ as above. Moreover, one of our main observations,
Corollary~\ref{cor:bound}, is that, by exploiting the algebraic
structure imposed by this sequence of identities, it is sufficient to
establish pointwise growth bounds on a $*$-generating set of~$\alg_0$;
this is a major simplification when compared with~\cite{FaS93}. Also,
by using the coordinate-free approach to quantum stochastic analysis
given in~\cite{Lin05}, we can take $\mul$ to be any Hilbert space,
removing the restriction in~\cite{FaS93} that $\mul$ be finite
dimensional.

The growth bounds obtained Section~\ref{sec:hopf} are employed in
Section~\ref{sec:qf} to produce a family of weakly multiplicative
$*$-linear maps from the algebra $\alg_0$ into the space of linear
operators in~$\ini \ctimes \fock$, where $\fock$ is the Boson Fock
space over $\elltwo$. It is shown that these maps extend to unital
$*$-homomorphisms in two distinct situations.
Theorem~\ref{thm:extension1}, which includes the case of AF~algebras,
exploits a square-root trick that is well known in the literature;
Theorem~\ref{thm:extension2}, which applies to universal
$C^*$~algebras such as the non-commutative torus or the Cuntz
algebras, is believed to be novel. Uniqueness of the solution is
proved, and it is also shown that $j$ is a cocycle, \ie it satisfies
the evolution equation
\begin{equation}\label{eqn:cocycle}
j_{s + t} = ( j_s \ctimes \iota_{\bop{\fock_{[ s ,\infty )}}{}} ) %
\comp \sigma_s \comp j_t \qquad \text{for all } s, t \ge 0,
\end{equation}
where $( \sigma_t )_{t \ge 0}$ is the shift semigroup on the algebra
of all bounded operators on~$\fock$. At this point we see another
novel feature of our work in contrast to previous results, all of
which start with a particular quantum dynamical semigroup $T$. In
these other papers the generator~$\tau$ of~$T$ is then augmented to
produce $\phi$, and the QSDE solved to give a dilation of $T$. For
example, in~\cite{FaS93} it is assumed that $T$ is an analytic
semigroup and that the composition of $\tau$ with the other components
of $\phi$ is well behaved in a certain sense; in~\cite{SiG07} it is
assumed that $T$ is covariant with respect to some group action on
$\alg$. For us, the starting point is the map $\phi$, which yields the
cocycle $j$, and hence, by compression, a quantum dynamical semigroup
$T$ generated by the closure of $\tau$, which has core $\alg_0$; this
semigroup, \textit{a fortiori}, is dilated by $j$. Thus we do not have
to check that $\tau$ is a semigroup generator with good properties at
the outset, thereby rendering our method easier to apply.

Our first application of Theorem~\ref{thm:extension1}, in
Section~\ref{sec:walks}, is to construct the Markov semigroups which
correspond to certain random walks on discrete groups.
Theorem~\ref{thm:extension1} is also employed in
Section~\ref{sec:sqep} to produce a dilation of the symmetric quantum
exclusion semigroup. This object, a model for systems of interacting
quantum particles, was introduced by Rebolledo \cite{Reb05} as a
non-commutative generalisation of the classical exclusion process
\cite{Lig99} and has generated much interest: see~\cite{PMQ09}
and~\cite{GQPM11}. The multiplicity space $\mul$ is required to be
infinite dimensional for this process, as in previous work on
processes arising from quantum interacting particle systems, \eg
\cite{GSS05}.

In Section~\ref{sec:uni} we use Theorem~\ref{thm:extension2} to obtain
flows on some universal $C^*$~algebras, namely the non-commutative
torus and the universal rotation algebra \cite{AnP89}; the former is a
particularly important example in non-commutative geometry. Quantum
flows on these algebras have previously been considered by Goswami,
Sahu and Sinha \cite{GSS05} and by Hudson and Robinson \cite{HuR88},
respectively.

\subsection{Conventions and notation}

The quantity $:=$ is to be read as `is defined to be' or similarly.
The quantity $\tfn{P}$ equals $1$ if the proposition $P$ is true and
$0$ if $P$ is false, where $1$ and $0$ are the appropriate
multiplicative and additive identities. The set of natural numbers is
denoted by $\N := \{ 1, 2, 3, \ldots \}$; the set of non-negative
integers is denoted by $\Z_+ := \{ 0, 1, 2, \ldots \}$. The linear
span of the set $S$ in the vector space $V$ is denoted by $\lin S$;
all vector spaces have complex scalar field and inner products are
linear on the right. The \emph{algebraic} tensor product is denoted by
$\otimes$; the Hilbert-space tensor product is denoted by $\ctimes$,
as is the ultraweak tensor product. The domain of the linear
operator~$T$ is denoted by $\dom T$. The identity transformation on
the vector space~$V$ is denoted by~$\id_V$. If $P$ is an orthogonal
projection on the inner-product space $V$ then the complement
$P^\perp := \id_V - P$, the projection onto the orthogonal complement
of the range of $P$. The Banach space of bounded operators from the
Banach space $X_1$ to the Banach space $X_2$ is denoted by
$\bop{X_1}{X_2}$, or by~$\bop{X_1}{}$ if $X_1$ and $X_2$ are equal.
The identity automorphism on the algebra $\alg$ is denoted
by~$\iota_\alg$. If $a$ and $b$ are elements in an algebra $\alg$ then
$[ a, b ] := a b - b a$ and $\{ a, b \} := a b + b a$ denote their
commutator and anti-commutator, respectively. If $\alg_0$ is a
$*$-algebra, $\hilb_1$ and $\hilb_2$ are Hilbert spaces and
$\alpha: \alg_0 \to \bop{\hilb_1}{\hilb_2}$ is a linear map then the
\emph{adjoint} map $\alpha^\dag: \alg_0 \to \bop{\hilb_2}{\hilb_1}$ is
such that $\alpha^\dag( a ) := \alpha( a^* )^*$ for all
$a \in \alg_0$.

\section{A higher-order product formula}\label{sec:hopf}

\begin{notation}
The Dirac bra-ket notation will be useful: for any Hilbert
space $\hilb$ and vectors $\xi$, $\chi \in \hilb$, let
\begin{alignat*}{3}
\ket{\hilb} & := \bop{\C}{\hilb}, & \qquad & %
\ket{\xi} : \C \to \hilb; \ \lambda \mapsto \lambda \xi %
& \qquad \text{(\emph{ket})\hphantom{.}} \\[2ex]
\text{and} \qquad %
\bra{\hilb} & := \bop{\hilb}{\C}, & & %
\bra{\chi} : \hilb \to \C; \ %
\eta \mapsto \langle \chi, \eta \rangle & \qquad \text{(\emph{bra})}.
\end{alignat*}
\end{notation}
In particular, we have the linear map
$\dyad{\xi}{\chi} \in \bop{\hilb}{}$ such that
$\dyad{\xi}{\chi} \eta = \langle \chi, \eta \rangle \xi$ for all
$\eta \in \hilb$.

Let $\alg \subseteq \bop{\ini}{}$ be a unital $C^*$~algebra with
identity $\id = \id_\ini$, whose elements act as bounded operators on
the \emph{initial space} $\ini$, a Hilbert space. Let
$\alg_0 \subseteq \alg$ be a norm-dense $*$-subalgebra of~$\alg$ which
contains~$\id$.

Let the \emph{extended multiplicity space} $\mmul := \C \oplus \mul$,
where the \emph{multiplicity space} $\mul$ is a Hilbert space,
and distinguish the unit vector $\vac := ( 1, 0 )$. For
brevity, let $\bopp := \bop{\mmul}{}$.

Let $\Delta := \id \otimes P_\mul \in \alg_0 \otimes \bopp$, where
$P_\mul := \dyad{\vac}{\vac}^\perp \in \bopp$ is the orthogonal
projection onto $\mul \subset \mmul$.

\begin{lemma}\label{lem:gen}
The map $\phi : \alg_0 \to \alg_0 \otimes \bopp$ is
$*$-linear, such that $\phi( \id ) = 0$ and such that
\begin{equation}\label{eqn:gen}
\phi( x y ) = %
\phi( x ) ( y \otimes \id_\mul ) + %
( x \otimes \id_\mul ) \phi( y ) + %
\phi( x ) \Delta \phi( y )
\qquad \text{for all } x, y \in \alg_0
\end{equation}
if and only if
\begin{equation}\label{eqn:gencpts}
\phi( x ) = \begin{bmatrix} \tau( x ) & \delta^\dagger( x ) \\[1ex]
\delta( x ) & \pi( x ) - x \otimes \id_\mul \end{bmatrix}
\qquad \text{for all } x \in \alg_0,
\end{equation}
where $\pi : \alg_0 \to \alg_0 \otimes \bop{\mul}{}$ is a
unital $*$-homomorphism, 
$\delta : \alg_0 \to \alg_0 \otimes \ket{\mul}$ is a
$\pi$-derivation, \ie a linear map such that
\[
\delta( x y ) = \delta( x ) y + \pi( x ) \delta( y ) %
\qquad \text{for all } x, y \in \alg_0,
\]
and $\tau : \alg_0 \to \alg_0$ is a $*$-linear map such that
\begin{equation}\label{eqn:tau}
\tau( x y ) - \tau( x ) y - x \tau( y ) = %
\delta^\dagger( x ) \delta( y ) %
\qquad \text{for all } x, y \in \alg_0.
\end{equation}
\end{lemma}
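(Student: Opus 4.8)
The plan is to decompose $\phi$ into its matrix entries relative to the orthogonal decomposition $\mmul = \C\vac \oplus \mul$ and translate the single identity \eqref{eqn:gen}, together with $*$-linearity and $\phi(\id) = 0$, into the stated properties of the four corner maps; the converse direction is then a direct (if somewhat lengthy) block-matrix computation. Concretely, write
\[
\phi(x) = \begin{bmatrix} \tau(x) & \beta(x) \\[1ex] \delta(x) & \gamma(x) \end{bmatrix}
\]
where $\tau(x) := (\id\otimes\bra{\vac})\,\phi(x)\,(\id\otimes\ket{\vac}) \in \alg_0$, $\delta(x) := (\id\otimes P_\mul)\,\phi(x)\,(\id\otimes\ket{\vac}) \in \alg_0\otimes\ket{\mul}$, $\beta(x) := (\id\otimes\bra{\vac})\,\phi(x)\,(\id\otimes P_\mul) \in \alg_0\otimes\bra{\mul}$, and $\gamma(x) := \Delta\,\phi(x)\,\Delta \in \alg_0\otimes\bop{\mul}{}$. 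Since $\id\otimes\ket{\vac}$ and $\Delta$ are orthogonal projections summing to $\id_{\ini\ctimes\mmul}$, these pieces determine $\phi$. Applying $(\id\otimes\bra{\vac})$ on the left and $(\id\otimes\ket{\vac})$ on the right to \eqref{eqn:gen}, and observing that $\Delta(\id\otimes\ket{\vac}) = 0$ kills the contributions of $(y\otimes\id_\mul)$ and $(x\otimes\id_\mul)$ to the first two terms except through their $\vac$-components while the It\^o term $\phi(x)\Delta\phi(y)$ contributes $\beta(x)\delta(y)$, yields $\tau(xy) = \tau(x)y + x\tau(y) + \beta(x)\delta(y)$; once we show $\beta = \delta^\dagger$ this is \eqref{eqn:tau}. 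The identity $*$-linearity of $\phi$ forces $\tau^\dagger = \tau$, i.e. $\tau$ is $*$-linear, and forces $\beta(x) = \delta^\dagger(x) = \delta(x^*)^*$, establishing $\beta = \delta^\dagger$.

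Next I would extract the lower-right block: applying $\Delta$ on both sides of \eqref{eqn:gen} and using that $\Delta(y\otimes\id_\mul) = (y\otimes\id_\mul)$ on the range of $\Delta$ (more precisely $\Delta\phi(x)(y\otimes\id_\mul)\Delta = \gamma(x)(y\otimes\id_\mul)$, since $(y\otimes\id_\mul)$ commutes with $\Delta = \id\otimes P_\mul$) gives
\[
\gamma(xy) = \gamma(x)(y\otimes\id_\mul) + (x\otimes\id_\mul)\gamma(y) + \gamma(x)\gamma(y).
\]
Setting $\pi(x) := \gamma(x) + x\otimes\id_\mul$ converts this into $\pi(xy) = \pi(x)\pi(y)$; that $\phi(\id) = 0$ gives $\gamma(\id) = 0$, hence $\pi(\id) = \id\otimes\id_\mul$ is unital, and $*$-linearity of $\phi$ gives $\gamma^\dagger = \gamma$, hence $\pi^\dagger = \pi$, so $\pi$ is a unital $*$-homomorphism. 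Finally, applying $\Delta$ on the left and $(\id\otimes\ket{\vac})$ on the right to \eqref{eqn:gen} isolates the lower-left block: the term $\phi(x)(y\otimes\id_\mul)$ contributes $\delta(x)y$ (the bottom-left corner of $\phi(x)$ times $y$), the term $(x\otimes\id_\mul)\phi(y)$ contributes $(x\otimes\id_\mul)\delta(y)$, and the It\^o term $\phi(x)\Delta\phi(y)$ contributes $\gamma(x)\delta(y)$; summing the last two gives $\pi(x)\delta(y)$, so $\delta(xy) = \delta(x)y + \pi(x)\delta(y)$, i.e. $\delta$ is a $\pi$-derivation. (One should check $\delta$ is valued in $\alg_0\otimes\ket{\mul}$, which holds by construction.)

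For the converse, assume $\phi$ has the form \eqref{eqn:gencpts} with $\pi$, $\delta$, $\tau$ as described. Then $\phi(\id) = 0$ is immediate from $\pi(\id) = \id\otimes\id_\mul$ and $\delta(\id) = 0$ (the latter a standard consequence of the derivation property, applied to $\id = \id\cdot\id$) and $\tau(\id) = 0$ (apply \eqref{eqn:tau} with $x = y = \id$, using $\delta(\id) = 0$). The $*$-linearity of $\phi$ follows from $\tau^\dagger = \tau$, $\pi^\dagger = \pi$, and $\delta^\dagger$ being, by definition, the adjoint of $\delta$. It only remains to verify \eqref{eqn:gen} entry by entry: multiplying out the right-hand side as a product of $2\times 2$ operator matrices over $\alg_0\otimes\bopp$ and comparing each of the four entries reduces exactly to the four identities already in hand --- the top-left entry is \eqref{eqn:tau}, the bottom-right is the homomorphism property of $\pi$ rewritten via $\gamma = \pi - (\,\cdot\,)\otimes\id_\mul$, and the two off-diagonal entries are the derivation identity for $\delta$ and its adjoint for $\delta^\dagger$. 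The main obstacle is purely bookkeeping: keeping careful track of which ampliation ($\id_\mul$ versus $\id_\mmul$) appears where, and confirming that the projections $\Delta$ and $\id\otimes\ket{\vac}$ interact with the multiplication operators $x\otimes\id_\mul$ in the way claimed, since the dimensions of the tensor legs shift between $\mmul$ and $\mul$. Once the index conventions are pinned down, both directions are routine.
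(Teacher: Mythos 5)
Your proposal is correct and is clearly the argument the authors have in mind when they call the lemma ``a straightforward exercise in elementary algebra'': decompose $\phi(x)$ into its four corner maps with respect to the splitting $\mmul = \C\vac \oplus \mul$, compress \eqref{eqn:gen} by the corresponding projections/isometries to obtain one identity per corner, and recognise the lower-right identity as the homomorphism property of $\pi := \gamma + (\cdot)\otimes\id_\mul$, the lower-left as the $\pi$-derivation property, and the upper-left as \eqref{eqn:tau}, with $*$-linearity supplying $\beta = \delta^\dagger$ and the adjoint relations; the converse is the reverse block-matrix computation. One cosmetic slip: $\id\otimes\ket{\vac}$ is an isometry $\ini \to \ini\ctimes\mmul$, not a projection, so the decomposition of the identity you want is $\id\otimes\dyad{\vac}{\vac} + \Delta = \id_{\ini\ctimes\mmul}$; this does not affect the argument, which otherwise matches the intended proof.
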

\begin{proof}
This is a straightforward exercise in elementary algebra.
\end{proof}

\begin{definition}
A $*$-linear map $\phi : \alg_0 \to \alg_0 \otimes \bopp$ such that
$\phi( \id ) = 0$ and such that~\eqref{eqn:gen} holds is a
\emph{flow generator}.
\end{definition}

\begin{remark}\label{rem:BEcdc}
Condition~\eqref{eqn:tau} may be expressed in terms of the
Bakry--\'{E}mery \textit{carr\'{e} du champ} operator
\[
\Gamma : \alg_0 \times \alg_0 \to \alg_0; \ ( x, y ) \mapsto %
\hlf \bigl( \tau( x y ) - \tau( x ) y - x \tau ( y ) \bigr);
\]
for \eqref{eqn:tau} to be satisfied, it is necessary and sufficient
that $2 \Gamma( x, y ) = \delta^\dagger( x ) \delta( y )$ for all
$x$, $y \in \alg_0$.

The $\pi$-derivation $\delta$ becomes a bimodule derivation if $\alg_0
\otimes \ket{\mul}$ is made into an $\alg_0$-$\alg_0$ bimodule by
setting $x \cdot z \cdot y := \pi( x ) z y$ for all $x$, $y \in
\alg_0$ and $z \in \alg_0 \otimes \ket{\mul}$.
\end{remark}

\begin{lemma}\label{lem:bddphi}
Let $\alg_0 = \alg$, let
$\pi : \alg \to \alg \otimes \bop{\mul}{}$ be a unital
$*$-homomorphism, let $z \in \alg \otimes \ket{\mul}$ and let
$h \in \alg$ be self adjoint. Define
\[
\delta : \alg \to \alg \otimes \ket{\mul}; \ %
x \mapsto z x - \pi( x ) z
\]
and
\[
\tau : \alg \to \alg; \ %
x \mapsto \I [ h, x ] - \hlf \{ z ^* z, x \} + z^* \pi( x ) z.
\]
Then the map $\phi: \alg \to \alg \otimes \bopp$ defined in terms of
$\pi$, $\delta$ and $\tau$ through~\eqref{eqn:gencpts} is a flow
generator.
\end{lemma}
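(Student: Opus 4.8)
The plan is to deduce this directly from Lemma~\ref{lem:gen}. Since the map $\phi : \alg \to \alg \otimes \bopp$ is, by hypothesis, defined through the prescription \eqref{eqn:gencpts} in terms of the triple $( \pi, \delta, \tau )$, the ``if'' direction of Lemma~\ref{lem:gen} tells us that $\phi$ is a flow generator as soon as we know that $\pi$ is a unital $*$-homomorphism, that $\delta$ is a $\pi$-derivation, and that $\tau$ is $*$-linear and satisfies the structure relation \eqref{eqn:tau}. The first of these is assumed, and the relevant codomains are automatically correct because $\alg$ is a $*$-algebra containing $\id$, $h$, $z$ and $z^*$; so two verifications remain, each being a routine manipulation in the $*$-algebra $\alg \otimes \bop{\mmul}{}$ that uses only associativity, the multiplicativity $\pi( x ) \pi( y ) = \pi( x y )$, the self-adjointness of $h$ and $z^* z$, and the fact that $\pi$ is a $*$-homomorphism.

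First I would check that $\delta$ is a $\pi$-derivation: expanding $\delta( x y ) = z x y - \pi( x y ) z$ and $\delta( x ) y + \pi( x ) \delta( y ) = ( z x - \pi( x ) z ) y + \pi( x )( z y - \pi( y ) z )$, the two $\pi( x ) z y$ terms cancel and the remainders agree once $\pi( x ) \pi( y )$ is rewritten as $\pi( x y )$. For $\tau$, linearity is clear and $*$-linearity ($\tau( x^* )^* = \tau( x )$) follows from $[ h, x^* ]^* = -[ h, x ]$, from $\{ z^* z, x^* \}^* = \{ z^* z, x \}$ and from $\pi( x^* )^* = \pi( x )$.

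The substantive computation is \eqref{eqn:tau}. First record that
\[
\delta^\dagger( x ) = \delta( x^* )^* = ( z x^* - \pi( x^* ) z )^* = x z^* - z^* \pi( x ),
\]
using that $\pi$ is a $*$-homomorphism, whence
\[
\delta^\dagger( x ) \delta( y ) = x z^* z y - x z^* \pi( y ) z - z^* \pi( x ) z y + z^* \pi( x y ) z .
\]
Then split $\tau( x y ) - \tau( x ) y - x \tau( y )$ into its three natural pieces: the commutator piece $\I\bigl( [ h, x y ] - [ h, x ] y - x [ h, y ] \bigr)$ vanishes because $[ h, \cdot ]$ is a derivation; the anticommutator piece $-\hlf \{ z^* z, x y \} + \hlf \{ z^* z, x \} y + \hlf x \{ z^* z, y \}$ collapses to $x z^* z y$ after cancellation; and the third piece is $z^* \pi( x y ) z - z^* \pi( x ) z y - x z^* \pi( y ) z$ (again by $\pi( x ) \pi( y ) = \pi( x y )$). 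Summing the three contributions reproduces precisely the expression displayed above for $\delta^\dagger( x ) \delta( y )$, so \eqref{eqn:tau} holds and Lemma~\ref{lem:gen} applies, giving the result. I do not expect any genuine obstacle here; the only point needing a little care is computing $\delta^\dagger$ correctly and then matching it term by term against the anticommutator and $\pi$-conjugation parts of $\tau( x y ) - \tau( x ) y - x \tau( y )$.
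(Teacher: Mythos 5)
Your proposal is correct and fills in exactly what the paper leaves as a ``straightforward exercise'': verifying the hypotheses of Lemma~\ref{lem:gen} by direct algebraic computation. The key steps — identifying $\delta^\dagger(x) = xz^* - z^*\pi(x)$, noting that the commutator piece of $\tau(xy)-\tau(x)y-x\tau(y)$ vanishes, that the anticommutator piece collapses to $xz^*zy$, and matching the remainder against $\delta^\dagger(x)\delta(y)$ — are all sound.
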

\begin{proof}
This is another straightforward exercise.
\end{proof}

\begin{remark}
Modulo important considerations regarding tensor products and the
ranges of $\delta$ and $\tau$, the above form for $\phi$ is,
essentially, the only one possible \cite[Lemma~6.4]{LiW00b}. The
quantum exclusion process in Section~\ref{sec:sqep} has a generator of
the same form but with unbounded $z$ and $h$.
\end{remark}

\begin{definition}\label{dfn:qrw}
Given a flow generator $\phi : \alg_0 \to \alg_0 \otimes \bopp$, the
\emph{quantum random walk} $\bigl( \phi_n \bigr)_{n \in \Z_+}$ is a
family of $*$-linear maps
\[
\phi_n : \alg_0 \to \alg_0 \otimes \bopp^{\otimes n}
\]
defined by setting
\[
\phi_0 := \iota_{\alg_0} \qquad \text{and} \qquad %
\phi_{n + 1} := %
\bigl( \phi_n \otimes \iota_{\bopp} \bigr) \comp \phi %
\qquad \text{for all } n \in \Z_+. 
\]
The following identity is useful: if $\xi_1$, $\chi_1$, \ldots,
$\xi_n$, $\chi_n \in \mmul$ and $x \in \alg_0$ then
\begin{equation}\label{eqn:ncpts}
\bigl( \id_\ini \otimes \bra{\xi_1} \otimes \cdots \otimes \bra{\xi_n}
\bigr)
\phi_n( x )
\bigl( \id_\ini \otimes \ket{\chi_1} \otimes \cdots \otimes %
\ket{\chi_n} \bigr) = %
\phi^{\xi_1}_{\chi_1} \comp \cdots \comp \phi^{\xi_n}_{\chi_n}( x ),
\end{equation}
where
\[
\phi^\xi_\chi : \alg_0 \to \alg_0; \ x \mapsto %
( 1_\ini \otimes \bra{\xi} ) \phi (x) (1_\ini \otimes \ket{\chi} )
\]
is a linear map for each choice of $\xi$, $\chi \in \mmul$.
\end{definition}

\begin{remark}\label{rem:ordering}
The paper \cite{LiW03}, results from which will be employed below,
uses a different convention to that adopted in
Definition~\ref{dfn:qrw}: the components of the product
$\bopp^{\otimes n}$ appear in the reverse order to how they do above.
\end{remark}

\begin{notation}
Let $\alpha \subseteq \{ 1, \ldots, n \}$, with elements arranged in
increasing order, and denote its cardinality by~$| \alpha |$. The
unital $*$-homomorphism
\[
\alg_0 \otimes \bopp^{\otimes | \alpha |} \to %
\alg_0 \otimes \bopp^{\otimes n}; \ T \mapsto T( n, \alpha )
\]
is defined by linear extension of the map
\[
A \otimes B_1 \otimes \cdots \otimes B_{| \alpha |} \mapsto %
A \otimes C_1 \otimes \cdots \otimes C_n,
\]
where
\[
C_i := \left\{ \begin{array}{ll}
 B_j & \text{if $i$ is the $j$th element of $\alpha$}, \\[1ex]
 \id_\mmul & \text{if $i$ is not an element of $\alpha$}.
\end{array}\right.
\]
For example, if $\alpha = \{ 1, 3, 4 \}$ and $n = 5$ then
\[
( A \otimes B_1 \otimes B_2 \otimes B_3 )( 5, \alpha ) = %
A \otimes %
B_1 \otimes \id_\mmul \otimes B_2 \otimes B_3 \otimes \id_\mmul.
\]

Given a flow generator $\phi : \alg_0 \to \alg_0 \otimes \bopp$, for
all $n \in \Z_+$ and $\alpha \subseteq \{ 1, \ldots, n \}$,
let
\[
\phi_{| \alpha |}( x; n, \alpha ) := %
\bigl( \phi_{| \alpha |}( x ) \bigr)( n, \alpha ) %
\qquad \text{for all } x \in \alg_0
\]
and let
\[
\Delta( n, \alpha ) := %
( \id_\ini \otimes P_\mul^{\otimes | \alpha|} )( n, \alpha ),
\]
so that, in the latter, $P_\mul$ acts on the components of
$\mmul^{\otimes n}$ which have indices in $\alpha$ and $\id_\mmul$
acts on the others.
\end{notation}

\begin{theorem}
Let $\bigl( \phi_n \bigr)_{n \in \Z_+}$ be the quantum random walk
given by the flow generator $\phi$. For all $n \in \Z_+$ and
$x$,~$y \in \alg_0$,
\begin{equation}\label{eqn:hoprod}
\phi_n( x y ) = %
\sum_{\alpha \cup \beta = \{ 1, \ldots, n \} } %
\phi_{| \alpha |}( x ; n, \alpha ) \Delta( n, \alpha \cap \beta ) %
\phi_{| \beta |}( y ; n, \beta),
\end{equation}
where the summation is taken over all sets $\alpha$ and $\beta$ whose
union is $\{ 1, \ldots n \}$.
\end{theorem}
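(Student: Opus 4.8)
The plan is to prove \eqref{eqn:hoprod} by induction on $n$, using the recursive definition $\phi_{n+1} = (\phi_n \otimes \iota_\bopp) \comp \phi$ and the first-order It\^o product formula \eqref{eqn:gen} as the base case. For $n = 0$ the formula reads $\phi_0(xy) = \phi_0(x) \Delta(0, \emptyset) \phi_0(y)$, which is just $xy = xy$ since the only pair with $\alpha \cup \beta = \emptyset$ is $\alpha = \beta = \emptyset$ and $\Delta(0,\emptyset) = \id_\ini$. For $n = 1$, the three terms on the right of \eqref{eqn:gen} correspond exactly to the three pairs $(\alpha,\beta)$ with $\alpha \cup \beta = \{1\}$: namely $(\{1\},\emptyset)$ giving $\phi(x)(y \otimes \id_\mul)$, then $(\emptyset,\{1\})$ giving $(x \otimes \id_\mul)\phi(y)$, and $(\{1\},\{1\})$ giving $\phi(x)\Delta\phi(y)$; here one reads $\phi_0(y;1,\emptyset) = y \otimes \id_\mmul$ and notes that the projection $\id_\mul \oplus \Delta$-bookkeeping matches because $\Delta(1,\emptyset) = \id_\mmul$ while $\Delta(1,\{1\}) = \Delta$.

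For the inductive step, I would apply $\phi_{n+1}(xy) = (\phi_n \otimes \iota_\bopp)\bigl(\phi(xy)\bigr)$, substitute \eqref{eqn:gen} for $\phi(xy)$, and then apply $\phi_n \otimes \iota_\bopp$ to each of the three resulting terms. For a term of the form $(\phi_n \otimes \iota_\bopp)\bigl(\phi(x)(y \otimes \id_\mul)\bigr)$ one uses that $\phi_n \otimes \iota_\bopp$ is an algebra homomorphism on $\alg_0 \otimes \bopp$ (it is the ampliation of $\phi_n$), so this factors as $(\phi_n \otimes \iota_\bopp)(\phi(x)) \cdot (\phi_n \otimes \iota_\bopp)(y \otimes \id_\mul)$. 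The first factor is, by definition, $\phi_{n+1}(x)$ — no wait, more carefully, $\phi(x) \in \alg_0 \otimes \bopp$ and $(\phi_n \otimes \iota_\bopp)$ applied to it is exactly the definition of $\phi_{n+1}(x)$; the second factor, since $y \otimes \id_\mul \in \alg_0 \otimes \bopp$ with trivial $\bopp$-component, becomes $\phi_n(y) \otimes \id_\mmul = \phi_n(y; n+1, \{1,\dots,n\})$ after relabelling. One then needs the combinatorial identity that the pairs $(\alpha,\beta)$ covering $\{1,\dots,n+1\}$ with $n+1 \in \alpha \setminus \beta$, $n+1 \in \beta \setminus \alpha$, or $n+1 \in \alpha \cap \beta$ are in bijection with the pairs covering $\{1,\dots,n\}$ (restrict by deleting $n+1$), and that under this bijection the operators $\phi_{|\alpha|}(x;n+1,\alpha)$ etc.\ decompose compatibly with the homomorphism property of $\cdot(n+1,\cdot)$ and with $\Delta(n+1, \alpha\cap\beta)$ splitting off a factor $\id_\mmul$, $\Delta$, or $P_\mul$ in the last slot.

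The main obstacle is the bookkeeping for how $\Delta$ interacts with the $(n,\alpha)$-ampliation maps: one must check that $(\phi_n \otimes \iota_\bopp)$ applied to $\phi(x)\Delta\phi(y)$ yields $\sum \phi_{|\alpha|}(x;n,\alpha)\Delta(n,\alpha\cap\beta)\phi_{|\beta|}(y;n,\beta)$ with a $\Delta$ inserted in the new $(n+1)$-st slot, and that this combines correctly with the other two terms — in particular that $\Delta = P_\mul$ in the final coordinate, so that the three contributions (last index in $\alpha$ only, in $\beta$ only, or in both) reassemble into a single sum over all $(\alpha',\beta')$ with $\alpha' \cup \beta' = \{1,\dots,n+1\}$. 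This requires care because the inductive hypothesis produces terms indexed by subsets of $\{1,\dots,n\}$, and one must track how $\cdot(n,\gamma)$ composed with $\cdot(n+1, \cdot)$-type embeddings (or rather, the way ampliation by $\iota_\bopp$ adds a slot) interacts with the existing slot structure; writing everything in terms of the "matrix element" identity \eqref{eqn:ncpts} and checking the formula coordinate-wise against $\xi_1,\chi_1,\dots$ may be the cleanest route, reducing the identity to the scalar recursion $\phi^{\xi}_{\chi}(xy) = \phi^\xi_\chi(x)\,y \cdot[\![\chi = \vac]\!] + \dots$, though some genuine combinatorial care over the covers $\alpha \cup \beta$ is still unavoidable. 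Alternatively, I would cite \cite{LiW03} for the combinatorial heart of the argument, noting the reversed-ordering convention flagged in Remark~\ref{rem:ordering} and simply verifying that the translation of conventions is consistent.
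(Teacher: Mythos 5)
Your sketch contains a genuine error at the heart of the inductive step: you assert that $\phi_n \otimes \iota_\bopp$ is an algebra homomorphism on $\alg_0 \otimes \bopp$. It is not. For $n \ge 1$ the map $\phi_n$ is \emph{not} multiplicative — indeed the theorem you are trying to prove says exactly how far $\phi_n$ is from being multiplicative, by exhibiting the correction terms indexed by pairs $(\alpha, \beta)$. The ampliation of a non-multiplicative map is non-multiplicative, so the factorization
\[
(\phi_n \otimes \iota_\bopp)\bigl(\phi(x)(y \otimes \id_\mmul)\bigr) = (\phi_n \otimes \iota_\bopp)\bigl(\phi(x)\bigr) \cdot (\phi_n \otimes \iota_\bopp)\bigl(y \otimes \id_\mmul\bigr)
\]
fails. (If it held, it would give $\phi_{n+1}(xy)$ as a single product of three factors, not the required sum over covers of $\{1, \ldots, n+1\}$.) The correct inductive step instead expands $\phi(x) = \sum_i a_i \otimes S_i$ and $\phi(y) = \sum_j b_j \otimes T_j$ as finite sums of elementary tensors, computes the three summands of $\phi(xy)$ as $\sum_i a_i y \otimes S_i$, $\sum_j x b_j \otimes T_j$ and $\sum_{i,j} a_i b_j \otimes S_i P_\mul T_j$, applies $\phi_n \otimes \iota_\bopp$ to each (which merely replaces the left tensor factor by $\phi_n$ of it), invokes the inductive hypothesis on each $\phi_n(a_i y)$, $\phi_n(x b_j)$ and $\phi_n(a_i b_j)$, and then re-assembles using the recursion $\phi_{m+1}(x) = \sum_i \phi_m(a_i) \otimes S_i$. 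The combinatorics of extending covers of $\{1,\ldots,n\}$ to covers of $\{1,\ldots,n+1\}$, which you gesture at correctly, only emerge after this tensor expansion.

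Your base cases ($n = 0, 1$) are fine, and the fallback you offer — citing \cite{LiW03} with the convention translation noted in Remark~\ref{rem:ordering} — is in fact exactly what the paper does; its entire proof is the sentence ``This may be established inductively: see [Proof of Theorem~4.1]{LiW03}.'' So the citation route is acceptable, but the self-contained argument as you have written it would not go through without repairing the false homomorphism claim.
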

\begin{proof}
This may be established inductively: see
\cite[Proof of Theorem~4.1]{LiW03}.
\end{proof}

\begin{definition}
The set $S \subseteq \alg_0$ is \emph{$*$-generating for $\alg_0$} if
$\alg_0$ is the smallest unital $*$-algebra which contains~$S$.
\end{definition}

\begin{corollary}\label{cor:bound}
For a flow generator $\phi : \alg_0 \to \alg_0 \otimes \bopp$, let
\begin{equation}\label{eqn:growth}
\alg_\phi := \{ x \in \alg_0 : %
\text{there exist $C_x$,~$M_x > 0$ such that
$\| \phi_n( x ) \| \le C_x M_x^n$ for all } n \in \Z_+ \}.
\end{equation}
Then $\alg_\phi$ is a unital $*$-subalgebra of $\alg_0$, which
is equal to $\alg_0$ if $\alg_\phi$ contains a $*$-generating set
for~$\alg_0$.
\end{corollary}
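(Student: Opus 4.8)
The plan is to verify that $\alg_\phi$ is closed under the unital $*$-algebra operations — that is, that it contains $\id$, and is closed under scalar multiplication, addition, multiplication and the involution — each time producing explicit constants $C$ and $M$ witnessing the required growth bound. Once this is done, the final clause follows immediately: if $\alg_\phi$ is a unital $*$-subalgebra containing a $*$-generating set $S$ for $\alg_0$, then by the very definition of ``$*$-generating'' the smallest unital $*$-algebra containing $S$ is $\alg_0$, so $\alg_0 \subseteq \alg_\phi \subseteq \alg_0$.

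First I would dispatch the easy closure properties. Since $\phi(\id) = 0$, an induction on $n$ using $\phi_{n+1} = (\phi_n \otimes \iota_\bopp) \comp \phi$ gives $\phi_n(\id) = 0$ for $n \ge 1$ and $\phi_0(\id) = \id$, so $\id \in \alg_\phi$ with $C_\id = 1$, $M_\id = 1$. Linearity of each $\phi_n$ gives, for $x, y \in \alg_\phi$ and $\lambda \in \C$, the bounds $\| \phi_n(\lambda x) \| = |\lambda| \, \| \phi_n(x) \| \le (|\lambda| C_x) M_x^n$ and $\| \phi_n(x + y) \| \le C_x M_x^n + C_y M_y^n \le (C_x + C_y)(M_x \vee M_y)^n$, so $\alg_\phi$ is a linear subspace. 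For the involution, note that each $\phi_n$ is $*$-linear: this follows by induction, since $\phi$ is $*$-linear and the adjoint on $\bopp^{\otimes n}$ is computed componentwise, whence $\phi_n(x^*) = \phi_n(x)^*$ and $\| \phi_n(x^*) \| = \| \phi_n(x) \|$, giving $x^* \in \alg_\phi$ with the same constants.

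The main work — and the step I expect to be the crux — is closure under multiplication, and here the higher-order It\^o product formula~\eqref{eqn:hoprod} does the essential job. For $x, y \in \alg_\phi$ we estimate $\| \phi_n(xy) \|$ by summing the norms of the terms $\phi_{|\alpha|}(x; n, \alpha) \, \Delta(n, \alpha \cap \beta) \, \phi_{|\beta|}(y; n, \beta)$ over the pairs $(\alpha, \beta)$ with $\alpha \cup \beta = \{1, \dots, n\}$. The maps $T \mapsto T(n, \alpha)$ are unital $*$-homomorphisms and hence contractive, and $\Delta(n, \alpha \cap \beta)$ is a projection, so each term is bounded in norm by $\| \phi_{|\alpha|}(x) \| \, \| \phi_{|\beta|}(y) \| \le C_x C_y M_x^{|\alpha|} M_y^{|\beta|}$. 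Writing $M := \max\{M_x, M_y, 1\}$, this is at most $C_x C_y M^{|\alpha| + |\beta|} \le C_x C_y M^{2n}$, since $|\alpha| + |\beta| \le 2n$. The number of pairs $(\alpha, \beta)$ with $\alpha \cup \beta = \{1, \dots, n\}$ is $3^n$ (each index independently lies in $\alpha \setminus \beta$, $\beta \setminus \alpha$, or $\alpha \cap \beta$), so
\[
\| \phi_n(xy) \| \le 3^n \cdot C_x C_y M^{2n} = C_x C_y \, (3 M^2)^n,
\]
and thus $xy \in \alg_\phi$ with $C_{xy} = C_x C_y$ and $M_{xy} = 3 M^2$. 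The only subtlety is bookkeeping: one should check that the exponents in~\eqref{eqn:hoprod} are indeed $|\alpha|$ and $|\beta|$ (not, say, $n$), which is exactly how the notation $\phi_{|\alpha|}(x; n, \alpha)$ is set up, and that the ordering convention of Remark~\ref{rem:ordering} does not affect norm estimates — it does not, since reversing tensor factors is an isometric $*$-isomorphism. This completes the verification that $\alg_\phi$ is a unital $*$-subalgebra of $\alg_0$, and the final assertion follows as noted above. \qed
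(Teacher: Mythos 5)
Your proof is correct and follows essentially the same approach as the paper: both reduce the problem to closure under products and both invoke the higher-order It\^o product formula~\eqref{eqn:hoprod}, bounding each term by $\| \phi_{|\alpha|}(x) \| \, \| \phi_{|\beta|}(y) \|$. The only difference is that you use the coarser (but perfectly sufficient) estimate $3^n C_x C_y M^{2n}$, whereas the paper organises the sum by $k = |\alpha|$ and $l = |\alpha \cap \beta|$ to obtain the sharper constant $C_x C_y ( M_x + M_x M_y + M_y )^n$.
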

\begin{proof}
It suffices to demonstrate that $\alg_\phi$ is closed under products. To
see this, let $x$,~$y \in \alg_\phi$ and suppose
$C_x$, $M_x$ and $C_y$, $M_y$ are as in~\eqref{eqn:growth}.
Then~\eqref{eqn:hoprod} implies that
\begin{align*}
\| \phi_n( x y ) \| & \le %
\sum_{\alpha \cup \beta = \{ 1, \ldots, n \} } %
\| \phi_{| \alpha |}( x ) \| \, %
\| \phi_{| \beta |}( y ) \| \\[1ex]
 & \le C_x C_y \sum_{k = 0}^n \binom{n}{k} M_x^k %
\sum_{l = 0}^k \binom{k}{l} M_y^{n - k + l} \qquad %
(k = | \alpha |,\ l = | \alpha \cap \beta |) \\[1ex]
 & = C_x C_y \sum_{k = 0}^n %
\binom{n}{k} M_x^k M_y^{n - k} ( 1 + M_y )^k \\[1ex]
 & = C_x C_y ( M_x + M_x M_y + M_y )^n
\end{align*}
for all $n \in \Z_+$, as required.
\end{proof}

\begin{lemma}
If the flow generator $\phi$ is as defined in
Lemma~\ref{lem:bddphi} then $\alg_\phi = \alg_0$.
\end{lemma}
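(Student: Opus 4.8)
The plan is to note that, since $z$ and $h$ are bounded, the flow generator $\phi$ is \emph{completely} bounded, and that the recursion $\phi_0 = \iota_{\alg_0}$, $\phi_{n + 1} = ( \phi_n \otimes \iota_{\bopp} ) \comp \phi$ of Definition~\ref{dfn:qrw} then forces an exponential bound $\| \phi_n \|_\cb \le \| \phi \|_\cb^n$. From this the identity $\alg_\phi = \alg_0$ follows at once, without recourse to Corollary~\ref{cor:bound}.

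First I would verify that $\phi$ is completely bounded by inspecting its matrix of components~\eqref{eqn:gencpts}. The unital $*$-homomorphism $x \mapsto \pi( x )$ and the ampliation $x \mapsto x \otimes \id_\mul$ are completely contractive; the maps $x \mapsto z x$ and $x \mapsto \pi( x ) z$ (and those obtained on replacing $z$ by $z^*$) have completely bounded norm at most $\| z \|$, being compositions of completely contractive maps with left or right multiplication by $z$; hence the $\pi$-derivation $\delta( x ) = z x - \pi( x ) z$, its adjoint $\delta^\dagger$ and $\tau( x ) = \I [ h, x ] - \hlf \{ z^* z, x \} + z^* \pi( x ) z$ are all completely bounded. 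Since $\phi$, viewed through~\eqref{eqn:gencpts} as a $2 \times 2$ matrix of completely bounded maps into $\bop{\ini \ctimes \mmul}{}$, is itself completely bounded, $\| \phi \|_\cb < \infty$.

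Next I would induct on $n$, the case $n = 0$ being immediate from $\phi_0 = \iota_{\alg_0}$. For the inductive step, ampliation by the identity map of $\bopp$ does not increase the completely bounded norm, so
\[
\| \phi_{n + 1} \|_\cb = \| ( \phi_n \otimes \iota_{\bopp} ) \comp \phi \|_\cb
\le \| \phi_n \otimes \iota_{\bopp} \|_\cb \, \| \phi \|_\cb
\le \| \phi_n \|_\cb \, \| \phi \|_\cb ,
\]
whence $\| \phi_n \|_\cb \le \| \phi \|_\cb^n$ for all $n \in \Z_+$; consequently, for each $x \in \alg_0$,
\[
\| \phi_n( x ) \| \le \| \phi_n \|_\cb \, \| x \| \le \| x \| \, M_x^n
\qquad \text{for all } n \in \Z_+ ,
\]
where $M_x := \max \{ 1, \| \phi \|_\cb \} > 0$, so that $x \in \alg_\phi$ and therefore $\alg_\phi = \alg_0$. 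There is no real obstacle here; the one point deserving emphasis is that the estimate must be carried out in the completely bounded norm and not the operator norm, because each step of the recursion enlarges the ambient algebra by a tensor factor of $\bopp$, and when $\mul$ is infinite dimensional a bound on the operator norm of $\phi$ alone gives no control over $\| \phi_n \otimes \iota_{\bopp} \|$ --- whereas the completely bounded norm is stable under such ampliation.
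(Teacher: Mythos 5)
Your proposal is correct and follows essentially the same route as the paper, which notes that $\phi$ is completely bounded and that $\| \phi_n \| \le \| \phi_n \|_\cb \le \| \phi \|_\cb^n$ for all $n \in \Z_+$. You merely spell out the verification that $\phi$ is completely bounded (via its matrix of components) and the induction on $n$, both of which the paper leaves implicit.
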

\begin{proof}
This follows immediately, since $\phi$ is completely bounded and
$\| \phi_n \| \le \| \phi_n \|_\cb \le \| \phi \|_\cb^n$ for
all~$n \in \Z_+$.
\end{proof}

The following result shows that, given a flow generator $\phi$ and
vectors $\chi$, $\xi \in \mmul$, the elements of~$\alg_\phi$ are
entire vectors for $\phi^\xi_\chi$.

\begin{lemma}\label{lem:abe}
Let $\phi : \alg_0 \to \alg_0 \otimes \bopp$ be a flow generator. For
all $\xi$,~$\chi \in \mmul$ we have
$\phi_\chi^\xi( \alg_\phi ) \subseteq \alg_\phi$, and the series
\begin{equation}\label{eqn:series}
\exp( z \phi_\chi^\xi ) := %
\sum_{n = 0}^\infty \frac{z^n ( \phi_\chi^\xi )^n}{n!}
\end{equation}
is strongly absolutely convergent on $\alg_\phi$ for all $z \in \C$.
\end{lemma}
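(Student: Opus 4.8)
The plan is to control the norm of $(\phi_\chi^\xi)^n(x)$ for $x\in\alg_\phi$ by realising this iterated map as a compression of the quantum random walk $\phi_n$. From the identity~\eqref{eqn:ncpts} in Definition~\ref{dfn:qrw}, taking all the $\xi_i$ equal to $\xi$ and all the $\chi_i$ equal to $\chi$, we obtain
\[
(\phi_\chi^\xi)^n(x) =
\bigl(\id_\ini\otimes\bra{\xi}^{\otimes n}\bigr)\,\phi_n(x)\,
\bigl(\id_\ini\otimes\ket{\chi}^{\otimes n}\bigr),
\]
so that $\|(\phi_\chi^\xi)^n(x)\| \le \|\xi\|^n\,\|\chi\|^n\,\|\phi_n(x)\|$. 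Hence if $x\in\alg_\phi$ with constants $C_x,M_x$ as in~\eqref{eqn:growth}, then
\[
\|(\phi_\chi^\xi)^n(x)\| \le C_x\,\bigl(\|\xi\|\,\|\chi\|\,M_x\bigr)^n
\qquad\text{for all }n\in\Z_+.
\]
This immediately gives strong absolute convergence of the series~\eqref{eqn:series} on $\alg_\phi$: for any $x\in\alg_\phi$ and $z\in\C$,
\[
\sum_{n=0}^\infty \frac{|z|^n\,\|(\phi_\chi^\xi)^n(x)\|}{n!}
\le C_x\sum_{n=0}^\infty\frac{\bigl(|z|\,\|\xi\|\,\|\chi\|\,M_x\bigr)^n}{n!}
= C_x\exp\bigl(|z|\,\|\xi\|\,\|\chi\|\,M_x\bigr) < \infty.
\]

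It remains to verify the invariance $\phi_\chi^\xi(\alg_\phi)\subseteq\alg_\phi$, which is needed for the series even to make sense term-by-term inside $\alg_\phi$. For $x\in\alg_\phi$, set $y:=\phi_\chi^\xi(x)\in\alg_0$; I must exhibit constants $C_y,M_y$ with $\|\phi_n(y)\|\le C_yM_y^n$. The key is a ``shift'' relation expressing $\phi_n\comp\phi_\chi^\xi$ in terms of $\phi_{n+1}$: directly from the recursive definition $\phi_{n+1}=(\phi_n\otimes\iota_\bopp)\comp\phi$ one checks that, for $x\in\alg_0$,
\[
\phi_n\bigl(\phi_\chi^\xi(x)\bigr)
= \bigl(\id_{\alg_0\otimes\bopp^{\otimes n}}\otimes\bra{\xi}\bigr)\,
\phi_{n+1}(x)\,
\bigl(\id_{\alg_0\otimes\bopp^{\otimes n}}\otimes\ket{\chi}\bigr),
\]
where the $\bra{\xi}$ and $\ket{\chi}$ act on the last tensor leg. (Here one uses that $\phi_n$ applied to $(\id_\ini\otimes\bra{\xi})\phi(x)(\id_\ini\otimes\ket{\chi})$ equals the corresponding compression of $\phi_{n+1}(x)=(\phi_n\otimes\iota_\bopp)\phi(x)$ in its final leg, since $\phi_n$ acts on the first $\alg_0$-factor and commutes with the final-leg compression.) It follows that
\[
\|\phi_n(\phi_\chi^\xi(x))\| \le \|\xi\|\,\|\chi\|\,\|\phi_{n+1}(x)\|
\le \|\xi\|\,\|\chi\|\,C_x M_x^{n+1}
= \bigl(\|\xi\|\,\|\chi\|\,C_x M_x\bigr)\,M_x^n,
\]
so $\phi_\chi^\xi(x)\in\alg_\phi$ with $C_y=\|\xi\|\,\|\chi\|\,C_xM_x$ and $M_y=M_x$. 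Combined with the bound of the previous paragraph, this completes the proof.

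The step I expect to require the most care is the shift relation in the second paragraph: one must track precisely which tensor legs the bra and ket act on and confirm that the operation ``apply $\phi_n$ to the first $\alg_0$-component'' genuinely commutes with ``compress the final $\bopp$-leg against $\bra{\xi}$ and $\ket{\chi}$''. This is purely formal — it follows from the associativity of the algebraic tensor product and the fact that these two operations act on disjoint legs — but it is the only place where one does more than quote~\eqref{eqn:ncpts} and Corollary~\ref{cor:bound}. An alternative, avoiding the shift relation, is to prove invariance directly from~\eqref{eqn:ncpts}: since $(\phi_\chi^\xi)^m(x)$ for each fixed $m$ already lies in $\alg_0$ and satisfies $\|(\phi_\chi^\xi)^m(x)\|\le C_x(\|\xi\|\|\chi\|M_x)^m$, and $(\phi_\chi^\xi)^{n}\bigl((\phi_\chi^\xi)^m(x)\bigr)=(\phi_\chi^\xi)^{n+m}(x)$, one gets $\|\phi_n((\phi_\chi^\xi)^m(x))\|\le (\|\xi\|\|\chi\|)^{-m}\|(\phi_\chi^\xi)^{n+m}(x)\|\cdot(\cdots)$ — but this still ultimately needs $(\phi_\chi^\xi)^m(x)\in\alg_\phi$, so the shift relation (or an equivalent direct estimate of $\|\phi_n\comp(\phi_\chi^\xi)^m\|$) is unavoidable; either way the estimate is the routine bound $C_xM_x^{n+m}$ coming from $x\in\alg_\phi$.
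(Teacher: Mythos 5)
Your proof is correct and is essentially the paper's argument: you derive the same two consequences of \eqref{eqn:ncpts} — the uniform slice bound $\|(\phi_\chi^\xi)^n(x)\|\le C_x(\|\xi\|\,\|\chi\|\,M_x)^n$ and the shift identity $\phi_n(\phi_\chi^\xi(x))=(\id\otimes\bra{\xi})\phi_{n+1}(x)(\id\otimes\ket{\chi})$ (the paper's \eqref{eqn:qrwbk} and \eqref{eqn:qrwslicebound}) — and read off convergence and invariance respectively, merely in the opposite order to the paper. One small quibble: invariance is not needed ``for the series to make sense term-by-term,'' since each $(\phi_\chi^\xi)^n(x)$ already lies in $\alg_0$ and the norm estimate alone gives convergence in $\alg$; invariance is simply the other assertion of the lemma.
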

\begin{proof}
Suppose $\| \phi_n( x ) \| \le C_x M_x^n$ for all $n \in \Z_+$.
It follows from~\eqref{eqn:ncpts} that
\begin{equation}\label{eqn:qrwbk}
\bigl( \id_{\ini \ctimes \mmul^{\ctimes n}} \otimes \bra{\xi} \bigr) %
\phi_{n + 1}( x ) %
\bigl( \id_{\ini \ctimes \mmul^{\ctimes n}} \otimes \ket{\chi} %
\bigr) = %
\phi_n\bigl( \phi_\chi^\xi( x ) \bigr),
\end{equation}
so
\[
\bigl\| \phi_n \bigl( \phi_\chi^\xi( x ) \bigr) \bigr\| \le %
\| \xi \| C_x M_x^{n + 1} \| \chi \| = %
( \| \xi \| \, \| \chi \| C_x M_x ) M_x^n
\]
and $\phi_\chi^\xi( x ) \in \alg_\phi$. Moreover~\eqref{eqn:ncpts}
also gives that
\begin{equation}\label{eqn:qrwslicebound}
\bigl\| \bigl( \phi_{\chi_1}^{\xi_1} \comp \cdots \comp %
\phi_{\chi_n}^{\xi_n} \bigr)( x ) \bigr\| \le %
\| \xi_1 \| \cdots \| \xi_n \| \| \chi_1 \| \cdots %
\| \chi_n \| C_x M_x^n,
\end{equation}
hence the series~\eqref{eqn:series} converges as claimed.
\end{proof}

\section{Quantum flows}\label{sec:qf}

\begin{notation}
Let $\fock$ denote Boson Fock space over $\elltwo$, the Hilbert space
of $\mul$-valued, square-integrable functions on the half line, and
let
\[
\evecs := \lin\{ \evec{f} : f \in \elltwo \}
\]
denote the linear span of the total set of exponential vectors
in $\fock$. As is customary, elementary tensors in
$\ini \otimes \fock$ are written without a tensor-product sign: in
other words, $u \evec{f} := u \otimes \evec{f}$ for all $u \in \ini$
and $f \in \elltwo$, \etc.

If $f \in \elltwo$ and $t \ge 0$ then $\wh{f}( t ) := \wh{f( t )}$,
where $\wh{\xi} := \vac + \xi \in \mmul$ for all $\xi \in \mul$.

Given $f \in \elltwo$ and an interval $I \subseteq \R_+$, let
$f_I \in \elltwo$ be defined to equal $f$ on~$I$ and~$0$ elsewhere,
with $f_{t)} := f_{[ 0, t )}$ and $f_{[t} := f_{[ t, \infty )}$ for all
$t \ge 0$.
\end{notation}

\begin{definition}
A family of linear operators $( T_t )_{t \ge 0}$ in
$\ini \ctimes \fock$ with domains including $\ini \otimes \evecs$ is
\emph{adapted} if
\[
\langle u \evec{f}, T_t v \evec{g} \rangle = %
\langle u \evec{f_{t)}}, T_t v \evec{g_{t)}} \rangle %
\langle \evec{f_{[t}}, \evec{g_{[t}} \rangle
\]
for all $u$,~$v \in \ini$, $f$,~$g \in \elltwo$ and~$t \ge 0$.
\end{definition}

\begin{theorem}\label{thm:qwi}
For all $n \in \N$ and
$T \in \bop{\ini \ctimes \mmul^{\ctimes n}}{}$ there exists a
family $\Lambda^n( T ) = \bigl( \Lambda^n_t( T ) \bigr)_{t \ge 0}$ of
linear operators in $\ini \ctimes \fock$, with domains including
$\ini \otimes \evecs$, that is adapted and
such that
\begin{equation}\label{eqn:qwiip}
\langle u \evec{f}, \Lambda^n_t( T ) v \evec{g} \rangle = %
\int_{\simp_n( t )} %
\langle u \otimes \wh{f}^{\otimes n}( \bt ), %
 T v\otimes \wh{g}^{\otimes n}( \bt ) \rangle \std \bt \,
\langle \evec{f}, \evec{g} \rangle
\end{equation}
for all $u$,~$v \in \ini$, $f$,~$g \in \elltwo$ and $t \ge 0$. Here
the simplex
\[
\simp_n( t ) := \{ \bt := ( t_1, \ldots, t_n ) \in [ 0 , t ]^n : %
t_1 < \cdots < t_n \}
\]
and
\[
\wh{f}^{\otimes n}( \bt ) := %
\wh{f}( t_1 ) \otimes \cdots \otimes \wh{f}( t_n ),
\qquad \text{\etc}.
\]
We extend this definition to include $n = 0$ by setting
$\Lambda^0_t( T ) := T \otimes \id_\fock$ for all $t \ge 0$.

If $f \in \elltwo$ then
\begin{equation}\label{eqn:qwi}
\| \Lambda^n_t( T ) u \evec{f} \| \le %
\frac{K_{f, t}^n}{\sqrt{n!}} \, \| T \| \, \| u \evec{f} \| %
\qquad \text{for all }t \ge 0 \text{ and } u \in \ini,
\end{equation}
where
$K_{f, t} := \sqrt{(2 + 4 \| f \|^2 ) ( t + \| f \|^2 )}$,
and the map
\[
\R_+ \to \bop{\ini}{\ini \ctimes \fock}; \ %
t \mapsto \Lambda^n_t( T ) %
\bigl( \id_\ini \otimes \ket{\evec{f}} \bigr)
\]
is norm continuous.
\end{theorem}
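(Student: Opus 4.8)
The plan is to build the quantum Wiener integrals $\Lambda^n_t(T)$ directly via the Riesz representation theorem, using \eqref{eqn:qwiip} as a definition of a sesquilinear form and then checking that it arises from a genuine operator. First I would fix $t\ge0$ and $T\in\bop{\ini\ctimes\mmul^{\ctimes n}}{}$, and for $u\evec f,\,v\evec g$ in the (linearly independent) total set $\ini\otimes\evecs$ define the right-hand side of \eqref{eqn:qwiip} as a bilinear quantity $B(u\evec f, v\evec g)$; extending bilinearly gives a sesquilinear form $B$ on $\ini\otimes\evecs$. The key estimate is that the integrand $\langle u\otimes\wh f^{\otimes n}(\bt),\,Tv\otimes\wh g^{\otimes n}(\bt)\rangle$ is bounded in modulus by $\|T\|\,\|u\|\,\|v\|\prod_{i=1}^n\|\wh f(t_i)\|\,\|\wh g(t_i)\|$, and $\|\wh f(s)\|^2 = 1+\|f(s)\|^2$, so after integrating over the simplex $\simp_n(t)$ one gets a bound of the form $\tfrac{1}{n!}\bigl(\prod\cdots\bigr)$ times $\|T\|\,\|u\|\,\|v\|\,|\langle\evec f,\evec g\rangle|$. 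This shows $B$ is bounded relative to the norms $\|u\evec f\|$, $\|v\evec g\|$ on the exponential domain; by the standard fact that a form on $\ini\otimes\evecs$ which is bounded in this sense is implemented by a unique operator with domain containing $\ini\otimes\evecs$, we obtain $\Lambda^n_t(T)$.

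Next I would verify the two stated quantitative claims. For \eqref{eqn:qwi}, rather than going through the form for a general second argument, I would take $v\evec g = u\evec f$ in the bound above, or more efficiently estimate $\|\Lambda^n_t(T)u\evec f\|^2$ by a direct Cauchy--Schwarz argument on the iterated-integral expression: writing $\Lambda^n_t(T)u\evec f$ as (the limit of) a sum of increments over a partition and using adaptedness together with the fact that the creation/number/annihilation increments on the exponential domain satisfy the usual $L^2$ bounds, one arrives at a factor $t+\|f\|^2$ per variable from the annihilation/identity part and a factor $2+4\|f\|^2$ from the martingale part, together with the combinatorial $1/n!$ from the simplex. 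Collecting these gives exactly $K_{f,t}:=\sqrt{(2+4\|f\|^2)(t+\|f\|^2)}$ and the claimed bound $\|\Lambda^n_t(T)u\evec f\|\le \tfrac{K_{f,t}^n}{\sqrt{n!}}\|T\|\,\|u\evec f\|$. Adaptedness itself is read off from \eqref{eqn:qwiip}: splitting each $\wh f(t_i)$ according to whether $t_i<t$ (it never exceeds $t$ on $\simp_n(t)$), the integral factorises through the $[0,t)$ components, and the $[t,\infty)$ part contributes only $\langle\evec{f_{[t}},\evec{g_{[t}}\rangle$, which is the adaptedness identity.

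For norm continuity of $t\mapsto\Lambda^n_t(T)\bigl(\id_\ini\otimes\ket{\evec f}\bigr)$, I would estimate, for $0\le s\le t$, the difference $\bigl(\Lambda^n_t(T)-\Lambda^n_s(T)\bigr)u\evec f$ by the same $L^2$ machinery applied to the integral over $\simp_n(t)\setminus\simp_n(s)$; since that region has Lebesgue measure tending to $0$ as $t\downarrow s$ (it is contained in $[0,t]^n\setminus[0,s]^n$), and the integrand is bounded uniformly for $t$ in a compact set, the bound goes to $0$, giving continuity. The extension to $n=0$ is immediate from $\Lambda^0_t(T)=T\otimes\id_\fock$. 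The main obstacle I anticipate is not any single estimate but keeping the bookkeeping of the three fundamental noises clean enough to extract precisely the stated constant $K_{f,t}$: the naive Cauchy--Schwarz on \eqref{eqn:qwiip} gives $\prod(1+\|f(t_i)\|^2)^{1/2}(1+\|g(t_i)\|^2)^{1/2}$, whose simplex integral produces $\tfrac{1}{n!}(t+\|f\|^2)^{n/2}(t+\|g\|^2)^{n/2}$ with no factor $2+4\|f\|^2$, so obtaining the sharper form of \eqref{eqn:qwi} with that exact constant genuinely requires the martingale/It\^o-isometry refinement rather than the crude form-boundedness argument, and I would be careful to invoke the former for \eqref{eqn:qwi} even though the latter suffices for mere existence.
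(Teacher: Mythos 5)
Your plan has the right three-part structure (define the operator, prove the bound \eqref{eqn:qwi}, prove continuity), and your diagnosis at the end --- that the crude Cauchy--Schwarz form bound does not yield the factor $2 + 4\|f\|^2$ and that one needs an It\^{o}-isometry refinement --- is exactly right. But the \emph{existence} step has a genuine gap. The estimate you derive for the sesquilinear form $B$ is a \emph{pointwise} bound on elementary exponential vectors, $|B(u\evec{f}, v\evec{g})| \le C_{f,g,t}\, \|u\|\, \|v\|\, |\langle \evec{f}, \evec{g}\rangle|$, where the constant $C_{f,g,t}$ grows without bound as $\|f\|, \|g\| \to \infty$. This is not a bound of the form $|B(\theta, \zeta)| \le C\|\theta\|\|\zeta\|$ on arbitrary linear combinations $\theta, \zeta \in \ini \otimes \evecs$; indeed $\Lambda^n_t(T)$ is generically unbounded, so no such uniform bound can exist, and the pointwise estimate does not control $B$ on linear combinations of exponential vectors, which need not be close to orthogonal. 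There is therefore no ``standard fact'' to invoke: to show that for each fixed $v\evec{g}$ the conjugate-linear functional $\theta \mapsto \overline{B(\theta, v\evec{g})}$ is bounded on $\ini \otimes \evecs$, you already need the same Fock-space machinery that produces \eqref{eqn:qwi}. The paper gets the operator by citing Proposition~3.18 of~\cite{Lin05}, which constructs $\Lambda^n_t(T)v\evec{g}$ explicitly via the chaos decomposition of $\fock$, and then cites inequality~(3.21) there for the vector-norm estimate with constant $C_f^2 = (\|f\| + \sqrt{1+\|f\|^2})^2 \le 2 + 4\|f\|^2$; the dichotomy you set up --- ``crude form bound for existence, It\^{o} refinement for the sharp estimate'' --- does not hold, since the refinement is needed for both.

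On the remaining points you are essentially in agreement with the paper. Adaptedness read off from \eqref{eqn:qwiip} by splitting at time $t$ is fine. Your continuity argument --- estimating the contribution of $\simp_n(t) \setminus \simp_n(s)$ and using dominated convergence --- is a serviceable alternative to the paper's route, which instead writes $\Lambda^n_t(T) - \Lambda^n_s(T) = \Lambda_t\bigl(\tfn{(s,t]}(\cdot)\,\Lambda^{n-1}_{\cdot}(\wt{T})\bigr)$ and applies the first-order fundamental estimate (Theorem~3.13 of~\cite{Lin05}) to get a quantitative modulus of continuity. Both rest on the same quantitative input; the paper's version has the advantage of giving an explicit H\"{o}lder-type bound rather than mere convergence, but either suffices for the stated claim. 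If you replace the Riesz-representation step by the explicit Fock-space construction (or simply cite~\cite{Lin05} as the paper does), the rest of your proposal goes through.
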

\begin{proof}
This is an extension of Proposition~3.18 of~\cite{Lin05}, from which
we borrow the notation; as for Remark~\ref{rem:ordering}, the
ordering of the components in tensor products is different but this is
no more than a convention. For each $f \in \elltwo$ define
$C_f \ge 0$ so that
\[
C_f^2 = \bigl( \| f \| + \sqrt{1 + \| f \|^2} \bigr)^2 \le %
2 + 4 \| f \|^2,
\]
and note that, by inequality~(3.21) of \cite{Lin05},
\begin{align*}
\| \Lambda^n_t ( T ) u \evec{f} \|^2 & \le %
\bigl( C_{f_{t)}} \bigr)^{2n} \int_{\simp_n(t)} %
\| T u \otimes \wh{f}^{\otimes n}( \bt ) \|^2 \std \bt \, %
\| \evec{f} \|^2 \\
& \le %
\frac{K_{f, t}^{2 n}}{n!} \, \| T \|^2 \| u \evec{f} \|^2.
\end{align*}
To show continuity, let $\wt{T}$ denote $T$ considered as an
operator on
$( \ini \ctimes \mmul ) \ctimes \mmul^{\ctimes ( n - 1 )}$, where the
right-most copy of $\mmul$ in the $n$-fold tensor product has moved
next to the initial space $\ini$. Then
\[
\Lambda^n_t( T ) - \Lambda^n_s( T ) = %
\Lambda_t \bigl( %
1_{( s, t ]}( \cdot ) \Lambda^{n - 1}_\cdot( \wt{T} )
\bigr),
\]
and so, using Theorem~3.13 of~\cite{Lin05},
\begin{align*}
\| \bigl( \Lambda^n_t( T ) - \Lambda^n_s( T ) \bigr) u \evec{f} \|^2
& \le 2 ( t + C_f^2 ) %
\int^t_s \| \Lambda^{n - 1}_r( \wt{T} ) %
\bigl( u \otimes \wh{f}( r ) \bigr) \evec{f} \|^2 \std r \\
& \le 2 ( t + C_f^2 ) \Bigl( \int^t_s \| \wh{f} (r) \|^2 \std r \Bigr)
\frac{K_{f, t}^{2 n - 2}}{( n - 1 )!} \, \| T \|^2 %
\| u \evec{f} \|^2.
\qedhere
\end{align*}
\end{proof}

The family $\Lambda^n( T )$ is the
\emph{$n$-fold quantum Wiener integral} of~$T$.

\begin{remark}
It may be shown \cite[Proof of Theorem~2.2]{LiW03} that
\[
\dom \Lambda^l_t( S )^* \supseteq %
\Lambda^m_t( T )( \ini \otimes \evecs )
\]
for all $l$,~$m \in \Z_+$, 
$S \in \bop{\ini \ctimes \mmul^{\ctimes l}}{}$,
$T \in \bop{\ini \ctimes \mmul^{\ctimes m}}{}$ and $t \ge 0$.
\end{remark}

\begin{theorem}\label{thm:intrep}
Let $\phi : \alg_0 \to \alg_0 \otimes \bopp$ be a flow generator. If
$x \in \alg_\phi$ then the series
\begin{equation}\label{eqn:jdef}
j_t( x ) := %
\sum_{n = 0}^\infty \Lambda^n_t\bigl( \phi_n( x ) \bigr)
\end{equation}
is strongly absolutely convergent on $\ini \otimes \evecs$ for all
$t \ge 0$, uniformly so on compact subsets of $\R_+$. The map
\[
\R_+ \to \bop{\ini}{\ini \ctimes \fock}; \ %
t \mapsto j_t( x ) %
\bigl( \id_\ini \otimes \ket{\evec{f}} \bigr)
\]
is norm continuous for all $f \in \elltwo$, the family
$\bigl( j_t( x ) \bigr)_{t \ge 0}$ is adapted and
\begin{equation}\label{eqn:wqsde}
\langle u \evec{f}, j_t( x ) v\evec{g} \rangle = %
\langle u \evec{f}, ( x v ) \evec{g} \rangle + %
\int_0^t \bigl\langle u \evec{f}, %
j_s \bigl( \phi_{\wh{g}( s )}^{\wh{f}( s )}( x ) \bigr) %
v \evec{g} \bigr\rangle \std s
\end{equation}
for all $u$,~$v \in \ini$, $f$,~$g \in \elltwo$, $x \in \alg_\phi$ and
$t \ge 0$. Furthermore,
\begin{equation}\label{eqn:matsp}
\bigl( \id_\ini \otimes \bra{\evec{f}} \bigr) j_t( x ) %
\bigl( \id_\ini \otimes \ket{\evec{g}} \bigr) \in \alg
\end{equation}
for all~$x \in \alg_\phi$, $f$,~$g \in \elltwo$ and $t \ge 0$.
\end{theorem}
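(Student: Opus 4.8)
The plan is to establish the four assertions — strong absolute convergence of the series \eqref{eqn:jdef}, norm continuity, adaptedness, and the weak QSDE \eqref{eqn:wqsde}, then \eqref{eqn:matsp} — in that order, leveraging the quantum Wiener integral estimates of Theorem~\ref{thm:qwi} and the growth bound defining $\alg_\phi$. First I would fix $x \in \alg_\phi$, with constants $C_x$, $M_x$ such that $\| \phi_n( x ) \| \le C_x M_x^n$ for all $n \in \Z_+$. Applying \eqref{eqn:qwi} to each summand gives
\[
\| \Lambda^n_t\bigl( \phi_n( x ) \bigr) u \evec{f} \| \le
\frac{K_{f, t}^n}{\sqrt{n!}} \, C_x M_x^n \, \| u \evec{f} \|,
\]
and since $\sum_n ( K_{f, t} M_x )^n / \sqrt{n!} < \infty$ with $K_{f, t}$ increasing in $t$, the series converges strongly and absolutely on $\ini \otimes \evecs$, uniformly for $t$ in compact subsets of $\R_+$. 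Norm continuity of $t \mapsto j_t( x )( \id_\ini \otimes \ket{\evec{f}} )$ then follows because each term $t \mapsto \Lambda^n_t\bigl( \phi_n( x ) \bigr)( \id_\ini \otimes \ket{\evec{f}} )$ is norm continuous by Theorem~\ref{thm:qwi}, and the convergence is locally uniform, so the uniform limit of continuous functions is continuous. Adaptedness of $\bigl( j_t( x ) \bigr)_{t \ge 0}$ is inherited termwise from the adaptedness of each $\Lambda^n( \phi_n( x ) )$, passing to the limit in the defining identity.

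For the weak QSDE \eqref{eqn:wqsde}, the idea is to differentiate the integral-product formula \eqref{eqn:qwiip} in $t$. Writing out $\langle u \evec{f}, \Lambda^n_t( \phi_n( x ) ) v \evec{g} \rangle$ as an iterated integral over the simplex $\simp_n( t )$ and peeling off the integration in the last variable $t_n$, one obtains
\[
\langle u \evec{f}, \Lambda^n_t( \phi_n( x ) ) v \evec{g} \rangle =
\int_0^t \langle u \evec{f}, \Lambda^{n - 1}_s\bigl( R_n( s ) \bigr) v \evec{g} \rangle \std s,
\]
where $R_n( s )$ is the operator on $\ini \ctimes \mmul^{\ctimes ( n - 1 )}$ obtained by compressing $\phi_n( x )$ in the last $\mmul$-slot against $\wh{f}( s )$ and $\wh{g}( s )$; by the identity \eqref{eqn:ncpts}, more precisely its consequence \eqref{eqn:qrwbk}, this compression equals $\phi_{n - 1}\bigl( \phi_{\wh{g}( s )}^{\wh{f}( s )}( x ) \bigr)$. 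Summing over $n \ge 1$ and interchanging sum and integral — justified by the locally uniform convergence already established, together with the bound \eqref{eqn:qwiip}/\eqref{eqn:qwi} applied with $\phi_{\wh{g}( s )}^{\wh{f}( s )}( x ) \in \alg_\phi$ guaranteed by Lemma~\ref{lem:abe} — collapses the right-hand side to $\int_0^t \langle u \evec{f}, j_s\bigl( \phi_{\wh{g}( s )}^{\wh{f}( s )}( x ) \bigr) v \evec{g} \rangle \std s$, while the $n = 0$ term contributes $\langle u \evec{f}, ( x v ) \evec{g} \rangle$. I expect the bookkeeping in this Fubini-type interchange — in particular keeping uniform control of the compressed elements $\phi_{\wh{g}( s )}^{\wh{f}( s )}( x )$ over $s$ in a compact interval, using that $\| \wh{f}( s ) \|$ is locally $L^2$ — to be the main technical obstacle, though it is routine in spirit.

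Finally, for \eqref{eqn:matsp}, the point is that $\bigl( \id_\ini \otimes \bra{\evec{f}} \bigr) j_t( x ) \bigl( \id_\ini \otimes \ket{\evec{g}} \bigr)$ is a bounded operator on $\ini$ (convergence of the series shows boundedness), and one must show it lies in the $C^*$-algebra $\alg$ rather than merely in $\bop{\ini}{}$. For each $n$, the operator $\bigl( \id_\ini \otimes \bra{\evec{f}} \bigr) \Lambda^n_t( \phi_n( x ) ) \bigl( \id_\ini \otimes \ket{\evec{g}} \bigr)$ is, by \eqref{eqn:qwiip}, an integral over $\simp_n( t )$ of elements of the form $\bigl( \id_\ini \otimes \bra{\wh{f}^{\otimes n}( \bt )} \bigr) \phi_n( x ) \bigl( \id_\ini \otimes \ket{\wh{g}^{\otimes n}( \bt )} \bigr)$, which by \eqref{eqn:ncpts} equal iterated slices $\phi^{\wh{f}( t_1 )}_{\wh{g}( t_1 )} \comp \cdots \comp \phi^{\wh{f}( t_n )}_{\wh{g}( t_n )}( x ) \in \alg_0 \subseteq \alg$; since $\alg$ is norm closed and the map $\bt \mapsto$ this element is bounded and (Bochner-)integrable with values in $\alg$, the integral lies in $\alg$. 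Summing over $n$ and using that $\alg$ is norm closed gives the claim.
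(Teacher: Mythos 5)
Your proposal is correct and follows essentially the same route as the paper's proof: the convergence and continuity estimates via \eqref{eqn:qwi} and the growth bound defining $\alg_\phi$, adaptedness inherited termwise, the weak QSDE obtained by peeling off the outermost simplex variable and using \eqref{eqn:qrwbk} (the paper writes this calculation in the reverse direction, building up from $\Lambda^n_s$ of the compressed element to the $(n+1)$-simplex integral, but it is the same manipulation), and \eqref{eqn:matsp} from the $\alg_0$-valued Bochner integral representation \eqref{eqn:cdqwi} together with the bound \eqref{eqn:qrwslicebound} and norm-closedness of $\alg$. Your explicit attention to the sum--integral interchange via Lemma~\ref{lem:abe} and local $L^2$ control of $\|\wh{f}(\cdot)\|$ is a welcome bit of extra rigour that the paper leaves implicit.
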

\begin{proof}
The first two claims are a consequence of the
estimate~\eqref{eqn:qwi}, the definition of~$\alg_\phi$ and the
continuity result from Theorem~\ref{thm:qwi}; adaptedness is inherited
from the adaptedness of the quantum Wiener integrals.
Lemma~\ref{lem:abe} implies that the integrand on the right-hand side
of~\eqref{eqn:wqsde} is well defined and,
by~\eqref{eqn:qrwbk},
\begin{multline*}
\bigl\langle u\evec{f}, %
\Lambda^n_s\bigl( \phi_n \bigl( %
\phi_{\wh{g}( s )}^{\wh{f}( s )}( x ) \bigr) \bigr) %
v \evec{g} \bigr\rangle \\[1ex]
\begin{aligned}
& = \int_{\simp_n( s )} %
\langle u \otimes \wh{f}^{\otimes n}( \bt ), %
\phi_n\bigl( \phi_{\wh{g}( s )}^{\wh{f}( s )}( x ) \bigr) %
v \otimes \wh{g}^{\otimes n}( \bt ) \rangle %
\std \bt \langle \evec{f}, \evec{g} \rangle \\[1ex]
& = \int_{\simp_n( s )} \langle u \otimes %
\wh{f}^{\otimes n}( \bt ) \otimes \wh{f}( s ), %
\phi_{n + 1}( x ) v \otimes \wh{g}^{\otimes n}( \bt ) \otimes %
\wh{g}( s ) \rangle \std \bt \langle \evec{f}, \evec{g} \rangle;
\end{aligned}
\end{multline*}
integrating with respect to $s$ then taking the sum of these terms
gives~\eqref{eqn:wqsde}. For the final claim, note that for any
$f$, $g \in \elltwo$, the $\alg_0$-valued map
\[
\simp_n( t ) \ni \bt \mapsto %
\phi^{\wh{f}( t_1 )}_{\wh{g}( t_1 )} \comp \cdots \comp %
\phi^{\wh{f}( t_n )}_{\wh{g}( t_n )}( x ) = %
\bigl( \id_\ini \otimes \bra{ \wh{f}^{\otimes n}( \bt ) } \bigr) %
\phi_n( x )
\bigl( \id_\ini \otimes \ket{ \wh{g}^{\otimes n}( \bt ) } \bigr)
\]
is Bochner integrable, hence
\begin{equation}\label{eqn:cdqwi}
\bigl( \id_\ini \otimes \bra{\evec{f}} \bigr) %
\Lambda_t^n\bigl( \phi_n( x ) \bigr) %
\bigl( \id_\ini \otimes \ket{\evec{g}} \bigr) = %
e^{\langle f, g \rangle} \int_{\simp_n( t )} %
\bigl( \phi_{\wh{g}( t_1 )}^{\wh{f}( t_1 )} %
\comp \cdots \comp %
\phi_{\wh{g}( t_n )}^{\wh{f}( t_n )} \bigr)( x ) \std \bt %
\in \alg.
\end{equation}
By~\eqref{eqn:qrwslicebound}, we may sum~\eqref{eqn:cdqwi} over all
$n \in \Z_+$, with the resulting series being norm convergent, and so
the final claim follows.
\end{proof}

\begin{remark}\label{rmk:jid}
For all $t \ge 0$, let $j_t$ be as in Theorem~\ref{thm:intrep}.
Since $\alg_\phi$ is a subspace of $\alg_0$ containing~$1$, and
each $\phi_n$ is linear with $\phi_n( 1 ) = 0$, it follows
from~\eqref{eqn:jdef} and Theorem~\ref{thm:qwi} that each $j_t$ is
linear and unital, as a map into the space of operators with domain
$\ini \otimes \evecs$. Moreover, the maps $j_t$ are weakly
$*$-homomorphic in the following sense.
\end{remark}

\begin{lemma}\label{lem:mult}
Let $\phi : \alg_0 \to \alg_0 \otimes \bopp$ be a flow generator and
let $j_t$ be as in Theorem~\ref{thm:intrep} for
all~$t \ge 0$. If $x$,~$y \in \alg_\phi$ then
$x^* y \in \alg_\phi$, with
\begin{equation}\label{eqn:wmult}
\langle j_t( x ) u \evec{f}, j_t( y ) v \evec{g} \rangle = %
\langle u \evec{f}, j_t( x^* y ) v \evec{g} \rangle
\end{equation}
for all $u$,~$v \in \ini$ and $f$,~$g \in \elltwo$. In particular, if
$x \in \alg_\phi$ then $j_t( x )^* \supseteq j_t( x^* )$.
\end{lemma}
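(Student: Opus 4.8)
The plan is to prove the weak multiplicativity identity~\eqref{eqn:wmult} by expanding both sides as series of quantum Wiener integrals and matching them term by term, using the higher-order It\^o product formula~\eqref{eqn:hoprod} as the algebraic engine. First I would record that $x^* y \in \alg_\phi$: since $\alg_\phi$ is a $*$-subalgebra of $\alg_0$ by Corollary~\ref{cor:bound} and $x \in \alg_\phi \Rightarrow x^* \in \alg_\phi$ (the defining growth bound~\eqref{eqn:growth} is stable under $*$ because each $\phi_n$ is $*$-linear, so $\|\phi_n(x^*)\| = \|\phi_n(x)\|$), the product $x^* y$ lies in $\alg_\phi$ and hence $j_t(x^* y)$ is defined by Theorem~\ref{thm:intrep}. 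Then I would write $\langle j_t(x) u\evec{f}, j_t(y) v\evec{g}\rangle = \sum_{l, m \ge 0} \langle \Lambda^l_t(\phi_l(x)) u\evec{f}, \Lambda^m_t(\phi_m(y)) v\evec{g}\rangle$, the double series converging absolutely by the norm bound~\eqref{eqn:qwi} together with the membership $x, y \in \alg_\phi$; the inner-product pairing of two quantum Wiener integrals is legitimate by the adjoint-domain remark preceding Theorem~\ref{thm:intrep}.

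The heart of the argument is the standard formula for the product of two quantum Wiener integrals: pairing $\Lambda^l_t(S)$ with $\Lambda^m_t(T)$ produces a sum over all ways of interleaving the two time-simplices $\simp_l$ and $\simp_m$ inside a single simplex $\simp_n$ with $n = |\alpha \cup \beta|$, where $\alpha$ indexes the $l$ "creation" times from the first integral and $\beta$ the $m$ times from the second, and coincident indices $\alpha \cap \beta$ contribute a $P_\mul$-contraction (because $\langle \wh{f}(s), \wh{g}(s)\rangle$ versus $\langle \widehat{f(s)}, \cdot\rangle$ bookkeeping forces the middle slot to be the projection $\Delta$, matching $\Delta(n,\alpha\cap\beta)$). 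Concretely, for fixed $n$ the coefficient of the $n$-fold integral on the left-hand side is exactly $\sum_{\alpha \cup \beta = \{1,\dots,n\}} \phi_{|\alpha|}(x^*)^{\dag}(n,\alpha)\,\Delta(n,\alpha\cap\beta)\,\phi_{|\beta|}(y)(n,\beta)$ once one uses $\langle u \otimes \wh f^{\otimes|\alpha|}, \phi_{|\alpha|}(x) \cdot\rangle = \langle \phi_{|\alpha|}(x)^\dag(\cdots) u \otimes \wh f^{\otimes|\alpha|}, \cdot\rangle$ and the $*$-linearity $\phi_{|\alpha|}(x)^\dag = \phi_{|\alpha|}(x^*)$ (which follows inductively from $\phi^\dag = \phi$ and Definition~\ref{dfn:qrw}). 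By the higher-order It\^o product formula~\eqref{eqn:hoprod} this sum equals $\phi_n(x^* y)$, which is precisely the coefficient of the $n$-fold quantum Wiener integral in the expansion~\eqref{eqn:jdef} of $\langle u\evec{f}, j_t(x^* y) v\evec{g}\rangle$. Summing over $n$ gives~\eqref{eqn:wmult}.

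I expect the main obstacle to be the careful derivation of the two-integral product formula and the verification that the combinatorics of interleaving simplices matches the index-set combinatorics of~\eqref{eqn:hoprod} exactly, including the appearance of $\Delta(n,\alpha\cap\beta)$ in the overlap slots; this is where one must be most attentive to the ordering conventions flagged in Remarks~\ref{rem:ordering} and after Theorem~\ref{thm:qwi}, and to the distinction between $\wh f$ and $\widehat{f(\cdot)}$ that produces the $P_\mul$ projections. All convergence issues — interchanging the $s$-integral with the sum, summing the double series, and justifying the term-by-term pairing — are routine given the geometric bound~\eqref{eqn:qwi} and the definition of $\alg_\phi$, so I would treat them briskly. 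The final sentence, $j_t(x)^* \supseteq j_t(x^*)$, is then immediate: taking $y = \id$ in~\eqref{eqn:wmult} (or rather reading~\eqref{eqn:wmult} with the roles of the pairing unwound) gives $\langle j_t(x) u\evec f, v\evec g\rangle = \langle u\evec f, j_t(x^*) v\evec g\rangle$ for all $u, v, f, g$, which says exactly that $v\evec g \in \dom j_t(x)^*$ with $j_t(x)^* v\evec g = j_t(x^*) v\evec g$, i.e. $j_t(x)^* \supseteq j_t(x^*)$ as operators with domain containing $\ini \otimes \evecs$.
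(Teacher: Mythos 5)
Your proposal is correct and follows the same route as the paper: express both sides as series of quantum Wiener integrals, pair them, and invoke the higher-order It\^o product formula~\eqref{eqn:hoprod} to match terms. The paper packages the pairing step by citing \cite[Theorem~2.2]{LiW03}, which gives exactly the finite algebraic identity
\[
\sum_{l,m=0}^{N} \Lambda^l_t\bigl(\phi_l(x)\bigr)^*\,\Lambda^m_t\bigl(\phi_m(y)\bigr)
= \sum_{n=0}^{2N} \Lambda^n_t\bigl(\phi_{n,N]}(x^*y)\bigr)
\qquad\text{on } \ini\otimes\evecs,
\]
where $\phi_{n,N]}$ is the $\alpha\cup\beta=\{1,\dots,n\}$, $|\alpha|,|\beta|\le N$ truncation; you would be re-deriving this pairing from scratch. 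The one point where you are a little too brisk is the reorganisation of the double sum over $(l,m)$ into a single sum over $n$, which you dismiss as routine. That rearrangement is precisely where the paper does work: since $\phi_{n,N]}=\phi_n$ only for $n\le N$, the passage $N\to\infty$ requires splitting off the tail $N<n\le 2N$ and killing it with a growth estimate on $\|\phi_{n,N]}(x^*y)\|$ obtained by the same combinatorial computation as in Corollary~\ref{cor:bound}, combined with \eqref{eqn:qwi}. So the convergence bookkeeping is not free; it is the same estimate that makes $\alg_\phi$ an algebra, repackaged. One small notational slip: you write $\phi_{|\alpha|}(x^*)^\dag(n,\alpha)$ in the interleaving sum, but $*$-linearity of $\phi_{|\alpha|}$ already gives $\phi_{|\alpha|}(x)^* = \phi_{|\alpha|}(x^*)$, so the correct coefficient is $\phi_{|\alpha|}(x^*;n,\alpha)$ with no extra dagger. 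Your derivation of $j_t(x)^*\supseteq j_t(x^*)$ from \eqref{eqn:wmult} with $y=\id$ is exactly right.
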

\begin{proof}
As $\alg_\phi$ is a $*$-algebra, so $x^* y \in \alg_\phi$. Let
$N \in \Z_+$ and note that, by \cite[Theorem~2.2]{LiW03},
\begin{equation}\label{eqn:finiteIto}
\sum_{l, m = 0}^N \Lambda^l_t\bigl( \phi_l( x ) \bigr)^* %
\Lambda^m_t\bigl( \phi_m( y ) \bigr) = %
\sum_{n = 0}^{2 N} \Lambda^n_t\bigl( \phi_{n, N]}( x^* y ) \bigr) %
\qquad \text{on } \ini \otimes \evecs,
\end{equation}
where
\[
\phi_{n, N]}( x^* y ) := %
\sum_{\substack{\alpha \cup \beta = \{ 1, \ldots, n \}\\[0.5ex]
| \alpha |, \, | \beta | \le N }} %
\phi_{| \alpha |}( x^* ; n, \alpha ) \Delta( n, \alpha \cap \beta ) %
\phi_{| \beta |}( y ; n, \beta ).
\]
Working as in the proof of Corollary~\ref{cor:bound} yields
the inequality
\[
\| \phi_{n, N]}( x^* y ) \| \le %
C_{x^*} C_y ( M_{x^*} + M_{x^*} M_y + M_y )^n,
\]
and so, by \eqref{eqn:qwi},
\begin{equation}\label{eqn:cutoffest}
| \langle u \evec{f}, \Lambda^n_t\bigl( \phi_{n, N]}( x^* y ) \bigr) %
v \evec{g} \rangle | \le %
\frac{K_{g, t}^n ( M_{x^*} + M_{x^*} M_y + M_y )^n}{\sqrt{n!}} \, %
C_{x^*} C_y \, \| u \evec{f} \| \, \| v \evec{g} \|.
\end{equation}
As $\phi_{n,N]} = \phi_n$ if $n \in \{0, 1, \ldots, N\}$, it follows
that
\begin{align*}
\langle j_t( x ) u \evec{f}, j_t( y ) v \evec{g} \rangle & = %
\lim_{N \to \infty} \sum_{l, m = 0}^N \langle u \evec{f}, %
\Lambda^l_t\bigl( \phi_l( x ) \bigr)^* %
\Lambda^m_t\bigl( \phi_m( y ) \bigr) v \evec{g} \rangle
\\[1ex]
 & = \lim_{N \to \infty} \sum_{n = 0}^N \langle u \evec{f}, %
\Lambda^n_t\bigl( \phi_n( x^* y ) \bigr) v \evec{g} \rangle
\\[1.5ex]
 & \hspace{5em} + \lim_{N \to \infty} %
\smash[t]{\sum_{n = N + 1}^{2 N}} \langle u \evec{f}, %
\Lambda^n_t\bigl( \phi_{n, N]}( x^* y ) \bigr) v \evec{g} \rangle
\\[1ex]
 & = \langle u \evec{f}, j_t( x^* y ) v \evec{g} \rangle,
\label{eqn:ll}
\end{align*}
since the final limit is zero by~\eqref{eqn:cutoffest}.
\end{proof}

\begin{lemma}\label{lem:uniquesoln}
If $\alg_\phi$ is dense in $\alg$ then there is at most one family of
$*$-homomorphisms $( \jt_t )_{t \ge 0}$ from $\alg$ to
$\bop{\ini \ctimes \fock}{}$ that satisfies~\eqref{eqn:wqsde}.
\end{lemma}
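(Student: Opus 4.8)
The plan is to show that any two candidate solutions must agree on the dense subalgebra $\alg_\phi$, and then to extend the conclusion to all of $\alg$ by continuity. So suppose $( \jt_t )_{t \ge 0}$ and $( \jt'_t )_{t \ge 0}$ are families of $*$-homomorphisms from $\alg$ to $\bop{\ini \ctimes \fock}{}$, each satisfying~\eqref{eqn:wqsde} for all $x \in \alg_\phi$. Since a $*$-homomorphism between $C^*$~algebras is norm contractive, $\| \jt_t( y ) \| \le \| y \|$ and $\| \jt'_t( y ) \| \le \| y \|$ for all $y \in \alg$ and $t \ge 0$. For $x \in \alg_\phi$ write $w_t( x ) := \jt_t( x ) - \jt'_t( x ) \in \bop{\ini \ctimes \fock}{}$, so that $\| w_t( x ) \| \le 2 \| x \|$; subtracting the two instances of~\eqref{eqn:wqsde}, whose free terms $\langle u \evec{f}, ( x v ) \evec{g} \rangle$ cancel, and using Lemma~\ref{lem:abe} to ensure that $\phi^{\wh{f}( s )}_{\wh{g}( s )}( x )$ again lies in $\alg_\phi$ — so that the identity may be fed back into itself — gives
\[
\langle u \evec{f}, w_t( x ) v \evec{g} \rangle =
\int_0^t \bigl\langle u \evec{f},
w_s\bigl( \phi^{\wh{f}( s )}_{\wh{g}( s )}( x ) \bigr) v \evec{g}
\bigr\rangle \std s
\]
for all $u$,~$v \in \ini$, $f$,~$g \in \elltwo$ and $t \ge 0$.

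The next step is to iterate this relation $n$ times. Routine measurability considerations justify the resulting use of Fubini's theorem: for each fixed $y \in \alg_\phi$ the map $s \mapsto \langle u \evec{f}, w_s( y ) v \evec{g} \rangle$ is continuous (being, by~\eqref{eqn:wqsde}, a difference of indefinite integrals), $w_s$ is linear and bounded in its argument, and $s \mapsto \phi^{\wh{f}( s )}_{\wh{g}( s )}( x )$ is Bochner measurable with separable range. Re-expressing the $n$-fold iterated integral as an integral over the simplex $\simp_n( t )$ — and noting, as in~\eqref{eqn:ncpts}, that the nested slice maps compose — one arrives at
\[
\langle u \evec{f}, w_t( x ) v \evec{g} \rangle =
\int_{\simp_n( t )} \bigl\langle u \evec{f},
w_{t_1}\bigl( ( \phi^{\wh{f}( t_1 )}_{\wh{g}( t_1 )} \comp \cdots
\comp \phi^{\wh{f}( t_n )}_{\wh{g}( t_n )} )( x ) \bigr) v \evec{g}
\bigr\rangle \std \bt.
\]
Now apply the crude bound $| \langle u \evec{f}, w_{t_1}( z ) v \evec{g} \rangle | \le 2 \| z \| \, \| u \evec{f} \| \, \| v \evec{g} \|$ together with the slice estimate~\eqref{eqn:qrwslicebound} and the elementary identity $\int_{\simp_n( t )} \prod_{i = 1}^n h( t_i ) \std \bt = \frac{1}{n!} \bigl( \int_0^t h( s ) \std s \bigr)^n$, valid for $h \ge 0$, taken with $h( s ) := \| \wh{f}( s ) \| \, \| \wh{g}( s ) \|$; since $\| \wh{f}( \cdot ) \|^2 = 1 + \| f( \cdot ) \|^2$, Cauchy--Schwarz gives $\int_0^t h( s ) \std s \le \sqrt{( t + \| f_{t)} \|^2 )( t + \| g_{t)} \|^2 )}$. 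Writing $C_x$,~$M_x$ for constants bounding $\| \phi_n( x ) \|$ as in~\eqref{eqn:growth}, this yields
\[
| \langle u \evec{f}, w_t( x ) v \evec{g} \rangle | \le
2 \| u \evec{f} \| \, \| v \evec{g} \| \, C_x \,
\frac{\bigl( M_x \sqrt{( t + \| f_{t)} \|^2 )( t + \| g_{t)} \|^2 )}
\bigr)^n}{n!},
\]
and letting $n \to \infty$ forces $\langle u \evec{f}, w_t( x ) v \evec{g} \rangle = 0$.

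Since $\ini \otimes \evecs$ is dense in $\ini \ctimes \fock$ and $w_t( x )$ is a bounded operator, it follows that $\jt_t( x ) = \jt'_t( x )$ for all $x \in \alg_\phi$ and $t \ge 0$. As $\alg_\phi$ is norm dense in $\alg$ by hypothesis and each of $\jt_t$,~$\jt'_t$ is norm continuous, being contractive, we conclude that $\jt_t = \jt'_t$ on $\alg$, as required.

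I expect the only point needing genuine care, beyond routine bookkeeping, to be the justification of the iteration and the interchange of integrals. The essential mechanism is that integration over $\simp_n( t )$ produces the factor $1 / n!$, while the slice bound~\eqref{eqn:qrwslicebound} — with its product of norms $\| \wh{f}( t_i ) \|$, $\| \wh{g}( t_i ) \|$, each square-integrable in its own variable — is precisely what lets that factorial decay dominate the geometric growth in $M_x$.
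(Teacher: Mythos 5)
Your proof is correct and follows essentially the same route as the paper: subtract the two instances of~\eqref{eqn:wqsde}, iterate the resulting integral identity $n$ times, pass to a simplex integral, and apply the slice bound~\eqref{eqn:qrwslicebound} together with the uniform bound $\| w_t( x ) \| \le 2 \| x \|$ (coming from contractivity of $*$-homomorphisms) to obtain a quantity of the form $C^n / n!$ which vanishes as $n \to \infty$. The paper is terser — it states the iterated bound directly in terms of $\| \wh{f}_{t)} \| \| \wh{g}_{t)} \|$ — while you make the Cauchy--Schwarz step and the Fubini justification explicit, but these are the same estimates written out in more detail.
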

\begin{proof}
Suppose that $j^{( 1 )}$ and $j^{( 2 )}$ are two families of
$*$-homomorphisms from $\alg$ to $\bop{\ini \otimes \fock}{}$
that satisfy~\eqref{eqn:wqsde}. Set $k_t := j^{( 1 )}_t - j^{( 2 )}_t$
and note we have that
\[
\langle u \evec{f}, k_t( x ) v \evec{g} \rangle = %
\int_0^t \langle u \evec{f}, %
k_s \bigl( \phi^{\wh{f}( s )}_{\wh{g}( s )}( x ) \bigr) v \evec{g} %
\rangle \std s
\]
for all $u$, $v \in \ini$, $f$, $g \in \elltwo$ and $x \in \alg_\phi$.
Iterating the above, and using the fact that $\| k_t \| \le 2$ for all
$t \ge 0$, we obtain the inequality
\[
| \langle u \evec{f}, k_t( x ) v \evec{g} \rangle | \le 2 %
\int_{\simp_n( t )} \| \phi^{\wh{f}( t_1 )}_{\wh{g}( t_1 )} \comp %
\cdots \comp \phi^{\wh{f}( t_n )}_{\wh{g}( t_n )}( x ) \| \std \bt %
\, \| u \evec{f}\| \, \| v \evec{g} \|.
\]
However~\eqref{eqn:qrwslicebound} now gives that
\[
| \langle u \evec{f}, k_t (x) v \evec{g} \rangle | \le 2 C_x %
\frac{\bigl( M_x \| \wh{f}_{t)} \| \|\wh{g}_{t)} \| \bigr)^n}{n!} %
\, \| u \evec{f} \| \, \| v \evec{g} \|
\]
and the result follows by letting $n \to \infty$.
\end{proof}

\begin{theorem}\label{thm:extension1}
Let $\phi : \alg_0 \to \alg_0 \otimes \bopp$ be a flow generator and
suppose $\alg_0$ contains its square roots: for all non-negative
$x \in \alg_0$, the square root~$x^{1 / 2}$ lies in $\alg_0$. If
$\alg_\phi = \alg_0$ then, for all~$t \ge 0$, there exists a
unital $*$-homomorphism
\[
\jt_t : \alg \to \bop{\ini \ctimes \fock}{}
\]
such that $\jt_t( x ) = j_t( x )$ on $\ini \otimes \evecs$ for all
$x \in \alg_0$, where $j_t( x )$ is as defined in
Theorem~\ref{thm:intrep}.
\end{theorem}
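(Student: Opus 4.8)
The plan is to fix $t \ge 0$ and bootstrap the weakly $*$-homomorphic family $j_t$ of Theorem~\ref{thm:intrep} up to a genuine $*$-homomorphism on all of $\alg$ in two stages: first show $j_t$ extends to a well-defined, bounded, $*$-homomorphic map on $\alg_0$, then extend by continuity to $\alg$. For the first stage, the natural route is to build a GNS-type representation. For each $f \in \elltwo$ and $u \in \ini$ the functional $x \mapsto \langle u\evec{f}, j_t(x) u\evec{f}\rangle$ is a positive linear functional on $\alg_0$: positivity follows from Lemma~\ref{lem:mult}, since $\langle u\evec{f}, j_t(x^*x)u\evec{f}\rangle = \|j_t(x)u\evec{f}\|^2 \ge 0$, using that $x^*x \in \alg_\phi = \alg_0$ and that $j_t$ is $*$-linear. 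Here is where the square-root hypothesis enters: to conclude that this functional extends continuously to $\alg$ — equivalently, that $|\langle u\evec{f}, j_t(x) u\evec{f}\rangle| \le \|x\|\,\|u\evec{f}\|^2$ — one writes a non-negative $a \in \alg_0$ with $\|a\|\le 1$ as $a = \id - b^*b$ for $b = (\id - a)^{1/2} \in \alg_0$ (available precisely because $\alg_0$ contains its square roots), whence $\langle u\evec{f}, j_t(a) u\evec{f}\rangle = \|u\evec{f}\|^2 - \|j_t(b)u\evec{f}\|^2 \in [\,\text{something},\|u\evec{f}\|^2\,]$; combined with positivity this pins the functional between $0$ and $\|u\evec{f}\|^2$ on the positive part of the unit ball, and polarisation plus the decomposition of a self-adjoint element into positive and negative parts (again needing square roots, via $|x| = (x^2)^{1/2}$) gives the desired norm bound. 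This is the step I expect to be the main obstacle: carefully tracking that all the elements produced by functional calculus actually lie in $\alg_0$ and that the resulting estimate is uniform in the relevant parameters.

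Granting the uniform bound $\|j_t(x) u\evec{f}\| \le \|x\|\,\|u\evec{f}\|$ for all $x \in \alg_0$ (first for self-adjoint $x$ via the functional-calculus argument above applied to $x^2$, then in general from $\|j_t(x)u\evec{f}\|^2 = \langle u\evec{f}, j_t(x^*x)u\evec{f}\rangle \le \|x^*x\|\,\|u\evec{f}\|^2$), one concludes that $j_t(x)$ is bounded on the dense domain $\ini\otimes\evecs$ with $\|j_t(x)\| \le \|x\|$, hence extends to a bounded operator on $\ini\ctimes\fock$; write $\jt_t(x)$ for this extension, which is manifestly linear in $x$. That $\jt_t$ is $*$-preserving follows from the last sentence of Lemma~\ref{lem:mult}: $j_t(x)^* \supseteq j_t(x^*)$ on the common core $\ini\otimes\evecs$, and both sides are now bounded, so $\jt_t(x)^* = \jt_t(x^*)$. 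Multiplicativity is the identity $\jt_t(x)\jt_t(y) = \jt_t(xy)$, which one reads off from~\eqref{eqn:wmult}: replacing $x$ by $x^*$ there (legitimate since $\alg_\phi = \alg_0$ is a $*$-algebra) gives $\langle j_t(x^*)u\evec{f}, j_t(y)v\evec{g}\rangle = \langle u\evec{f}, j_t(xy)v\evec{g}\rangle$, and using $j_t(x^*)^* = \jt_t(x)$ as bounded operators this rearranges to $\langle u\evec{f}, \jt_t(x)\jt_t(y)v\evec{g}\rangle = \langle u\evec{f}, \jt_t(xy)v\evec{g}\rangle$ for all $u,v,f,g$; since $\ini\otimes\evecs$ is dense, $\jt_t(x)\jt_t(y) = \jt_t(xy)$. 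Unitality is immediate from Remark~\ref{rmk:jid}.

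Finally, since $\jt_t$ is a unital $*$-homomorphism between $C^*$-algebras it is automatically norm-decreasing, $\|\jt_t(x)\| \le \|x\|$ for $x \in \alg_0$ — which re-derives the bound and makes it clean — so $\jt_t$ extends uniquely by continuity from the norm-dense subalgebra $\alg_0$ to a bounded linear map $\jt_t : \alg \to \bop{\ini\ctimes\fock}{}$. Continuity of multiplication, involution and the unit then transfers the $*$-homomorphism and unitality properties from $\alg_0$ to $\alg$. By construction $\jt_t(x) = j_t(x)$ on $\ini\otimes\evecs$ for every $x \in \alg_0$, which is the asserted compatibility. This completes the proof for each fixed $t \ge 0$.
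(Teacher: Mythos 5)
Your proposal is correct and takes essentially the same route as the paper: both hinge on the square-root trick (the paper sets $y = (\|x\|\id - x)^{1/2} \in \alg_0$ for $x \ge 0$ to obtain $\langle\theta, j_t(x)\theta\rangle \le \|x\|\,\|\theta\|^2$, your $a = \id - b^*b$ after normalising), followed by the direct estimate $\|j_t(x)\theta\|^2 = \langle\theta, j_t(x^*x)\theta\rangle \le \|x\|^2\|\theta\|^2$ for arbitrary $x$, extension by continuity, and the multiplicativity check from Lemma~\ref{lem:mult}. The polarisation and positive--negative decomposition you bring in during the first paragraph are not needed once one goes straight to the $x^*x$ estimate --- the shortcut you yourself supply in the second paragraph and the one the paper uses --- so those detours can simply be dropped.
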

\begin{proof}
Let $x \in \alg_0$ and suppose first that $x \ge 0$. If
$y := ( \| x \| \id - x )^{1 / 2}$, which lies in $\alg_0$ by
assumption, then Lemma~\ref{lem:mult} and Remark~\ref{rmk:jid} imply
that
\[
0 \le \| j_t( y ) \theta \|^2 = %
\langle \theta, j_t( y^2 ) \theta \rangle = %
\| x \| \, \| \theta \|^2 - \langle \theta, j_t( x ) \theta \rangle %
\qquad \text{for all } \theta \in \ini \otimes \evecs.
\]
If $x$ is now an arbitrary element of $\alg_0$, it follows that
\[
\| j_t( x ) \theta \|^2 = %
\langle \theta, j_t( x^* x ) \theta \rangle \le %
\| x^* x \| \, \| \theta \|^2 = \| x \|^2 \| \theta \|^2.
\]
Thus $j_t( x )$ extends to
$\jt_t( x ) \in \bop{\ini \ctimes \fock}{}$, which has norm at most
$\| x \|$, and the map
\[
\alg_0 \to \bop{\ini \ctimes \fock}{}; \ x \mapsto \jt_t( x )
\]
is a $*$-linear contraction, which itself extends to a $*$-linear
contraction
\[
\jt_t : \alg \to \bop{\ini \ctimes \fock}{}.
\]
Furthermore, if $x$,~$y \in \alg_0$ and
$\theta$,~$\zeta \in \ini \otimes \evecs$ then, by
Lemma~\ref{lem:mult},
\[
\langle \theta, \jt_t( x ) \jt_t( y ) \zeta \rangle = %
\langle \jt_t( x^* ) \theta, \jt( y ) \zeta \rangle = %
\langle j_t( x^* ) \theta, j_t( y ) \zeta \rangle = %
\langle \theta, j_t( x y ) \zeta \rangle = %
\langle \theta, \jt_t( x y ) \zeta \rangle,
\]
so $\jt_t$ is multiplicative on $\alg_0$. An approximation argument
now gives that $\jt_t$ is multiplicative on the whole of~$\alg$.
\end{proof}

\begin{remark}\label{rmk:af}
If $\alg$ is an AF algebra, \ie the norm closure of an increasing
sequence of finite-dimensional $*$-subalgebras, then its local algebra
$\alg_0$, the union of these subalgebras, contains its square roots,
since every finite-dimensional $C^*$~algebra is closed in $\alg$.
\end{remark}

\begin{definition}
The unital $C^*$~algebra $\alg$ has \emph{generators}
$\{ a_i : i \in I \}$ if $\alg$ is the smallest unital $C^*$~algebra
which contains $\{ a_i : i \in I \}$. These generators
\emph{satisfy the relations} $\{ p_k : k \in K \}$ if each $p_k$ is a
complex polynomial in the non-commuting indeterminate
$\langle X_i, X_i^* : i \in I \rangle$ and, for all $k \in K$, 
the algebra element $p_k( a_i , a_i^* : i \in I )$, obtained from
$p_k$ by replacing $X_i$ by $a_i$ and~$X_i^*$ by $a_i^*$ for all
$i \in I$, is equal to $0$.

Suppose $\alg$ has generators $\{ a_i : i \in I \}$ which satisfy the
relations $\{ p_k : k \in K \}$. Then $\alg$ is
\emph{generated by isometries} if
$\{ X_i^* X_i - 1 : i \in I \} \subseteq \{ p_k : k \in K \}$ and
is \emph{generated by unitaries}
if~$\{ X_i^* X_i - 1, \ X_i X_i^* - 1 : i \in I \} \subseteq %
\{ p_k : k \in K \}$.
The algebra $\alg$ is \emph{universal} if, given any unital
$C^*$~algebra $\blg$ containing a set of elements
$\{ b_i : i \in I \}$ which satisfies the relations
$\{ p_k : k \in K \}$, \ie~$p_k( b_i, b_i^* : i \in I ) = 0$ for all
$k \in K$, there exists a unique $*$-homomorphism
$\pi : \alg \to \blg$ such that~$\pi( a_i ) = b_i$ for all $i \in I$.
\end{definition}

\begin{theorem}\label{thm:extension2}
Let $\alg$ be the universal $C^*$~algebra generated by isometries
$\{ s_i : i \in I \}$ which satisfy the relations
$\{ p_k : k \in K \}$, and let $\alg_0$ be the $*$-algebra generated
by $\{ s_i : i \in I \}$. If~$\phi : \alg_0 \to \alg_0 \otimes \bopp$
is a flow generator such that $\alg_\phi = \alg_0$ then, for all
$t \ge 0$, there exists a unital $*$-homomorphism
\[
\jt_t : \alg \to \bop{\ini \ctimes \fock}{}
\]
such that $\jt_t( x ) = j_t( x )$ on $\ini \otimes \evecs$ for all
$x \in \alg_0$, where $j_t( x )$ is as defined in
Theorem~\ref{thm:intrep}.
\end{theorem}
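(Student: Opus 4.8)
The plan is to mimic the structure of the proof of Theorem~\ref{thm:extension1}, but replace the square-root trick --- which is unavailable here, since the $*$-algebra generated by isometries need not contain its square roots --- with an appeal to the universal property of $\alg$. The key point is that we already possess, by Lemma~\ref{lem:mult} and Remark~\ref{rmk:jid}, a family of weakly $*$-homomorphic, unital, $*$-linear maps $j_t$ on $\alg_0 = \alg_\phi$; what we lack is a bound $\|j_t(x)\| \le \|x\|$ that would let us extend $j_t$ to $\alg$ by continuity. Instead, I would first produce the $*$-homomorphism on $\alg$ directly on the generators, and only afterwards check that it agrees with $j_t$ on $\alg_0$.

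\textbf{Step 1 (the generators go to isometries).} Fix $t \ge 0$. For each $i \in I$, consider the operator $j_t(s_i)$ with domain $\ini \otimes \evecs$. Since $s_i^* s_i = \id$, Lemma~\ref{lem:mult} gives $\langle j_t(s_i)\theta, j_t(s_i)\zeta\rangle = \langle\theta, j_t(\id)\zeta\rangle = \langle\theta,\zeta\rangle$ for all $\theta, \zeta \in \ini \otimes \evecs$, so $j_t(s_i)$ is isometric on its domain and hence extends uniquely to an isometry $V_i \in \bop{\ini \ctimes \fock}{}$.

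\textbf{Step 2 (the relations are satisfied).} The crucial claim is that the $V_i$ satisfy the relations $\{p_k : k \in K\}$, \ie $p_k(V_i, V_i^* : i \in I) = 0$ for all $k \in K$. Since each $p_k$ is a polynomial in the $X_i$ and $X_i^*$, and since $p_k(s_i, s_i^* : i \in I) = 0$ in $\alg_0$, it suffices to show that, on the dense domain $\ini \otimes \evecs$, evaluating a word in the $V_i$ and $V_i^*$ against vectors in $\ini \otimes \evecs$ reproduces the corresponding word applied by $j_t$. For this one uses Lemma~\ref{lem:mult} repeatedly: for $x, y \in \alg_\phi$ one has $\langle\theta, V(x)^* V(y)\zeta\rangle$-type identities, and more generally, because $\jt_t(x^*) \subseteq j_t(x)^* = V(x)^*$ on the domain, any alternating product $V_{i_1}^{\sharp}\cdots V_{i_m}^{\sharp}$ applied to $\zeta \in \ini\otimes\evecs$ and paired with $\theta \in \ini\otimes\evecs$ equals $\langle\theta, j_t(w)\zeta\rangle$ where $w$ is the corresponding word in the $s_i$, $s_i^*$. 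Summing over the monomials of $p_k$ and using $p_k(s_i,s_i^*:i\in I)=0$ together with $j_t$ being linear gives $\langle\theta, p_k(V_i,V_i^*:i\in I)\zeta\rangle = \langle\theta, j_t(0)\zeta\rangle = 0$; density of $\ini\otimes\evecs$ finishes it. \textbf{This is the step I expect to be the main obstacle}, because one must be careful that the products $V_{i_1}^{\sharp}\cdots V_{i_m}^{\sharp}\zeta$ genuinely land back in the domain $\ini\otimes\evecs$ at each stage so that Lemma~\ref{lem:mult} can be reapplied --- this is where the hypothesis $\alg_\phi = \alg_0$ is essential, since it guarantees $j_t$ is defined on all of $\alg_0$ and, via Remark~\ref{rmk:jid} and the fact that $\jt_t(x) = j_t(x)$ on the domain, that products of the $V_i$ restrict correctly. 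One handles it by an induction on word length, using at each stage that $j_t(w)(\ini\otimes\evecs) \subseteq \ini\otimes\evecs$ is \emph{not} quite true, so instead one works with the sesquilinear form and the adjoint relation $\jt_t(x^*)\subseteq j_t(x)^*$ to move operators across the inner product one at a time.

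\textbf{Step 3 (invoke universality and identify with $j_t$).} By the universal property of $\alg$, Step~2 yields a unique unital $*$-homomorphism $\jt_t : \alg \to \bop{\ini\ctimes\fock}{}$ with $\jt_t(s_i) = V_i$ for all $i \in I$. Both $\jt_t\!\restriction_{\alg_0}$ and $j_t$ are $*$-linear, unital, and multiplicative-in-the-weak-sense maps on $\alg_0$ agreeing on the generating set $\{s_i\}$, and $\jt_t$ is genuinely multiplicative while $j_t$ satisfies \eqref{eqn:wmult}; a routine induction on word length, together with the adjoint relation of Lemma~\ref{lem:mult}, shows $\langle\theta,\jt_t(x)\zeta\rangle = \langle\theta, j_t(x)\zeta\rangle$ for all $x \in \alg_0$ and $\theta,\zeta \in \ini\otimes\evecs$, hence $\jt_t(x) = j_t(x)$ on $\ini\otimes\evecs$ as required. (Uniqueness of the cocycle, when $\alg_\phi$ is dense, is then immediate from Lemma~\ref{lem:uniquesoln}.)
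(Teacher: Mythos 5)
Your proof is correct and follows essentially the same route as the paper: isometry of $j_t(s_i)$ from Lemma~\ref{lem:mult} and $s_i^* s_i = \id$, verification that the bounded extensions of the $j_t(s_i)$ satisfy the relations $\{p_k\}$, and then the universal property of $\alg$. The paper organizes the middle step slightly more economically: it first notes that $j_t(s_i^*)$ is also contractive (because $\id - s_i s_i^*$ is a projection, so $\|\theta\|^2 - \langle \theta, j_t(s_i s_i^*)\theta\rangle = \|j_t(\id - s_i s_i^*)\theta\|^2 \ge 0$), and then repeated application of~\eqref{eqn:wmult} shows that \emph{every} $j_t(x)$ with $x \in \alg_0$ is bounded, whence $j_t$ extends to a genuine $*$-homomorphism $\alg_0 \to \bop{\ini\ctimes\fock}{}$, for which the relations hold automatically since $*$-homomorphisms preserve polynomial identities. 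Your direct inductive verification on the $V_i$ is equivalent in substance, and the domain concern you flagged in Step~2 is resolved exactly as you suggest: one does not try to land $V_{i_1}^{\sharp *}\theta$ back in $\ini\otimes\evecs$, but instead applies~\eqref{eqn:wmult} with $x$ a single generator and $y$ the remaining word, which keeps both arguments of $j_t$ in $\alg_\phi$ applied to vectors in $\ini\otimes\evecs$, and then uses $j_t(x^*) \subseteq j_t(x)^*$ to identify the left entry with $(V_{i_1}^{\sharp})^*\theta$. (One small typo: you wrote $\jt_t(x^*) \subseteq j_t(x)^*$ where the Lemma~\ref{lem:mult} containment $j_t(x^*) \subseteq j_t(x)^*$ is meant.)
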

\begin{proof}
Remark~\ref{rmk:jid} and Lemma~\ref{lem:mult} imply that $j_t( s_i )$
is isometric and that $j_t( s_i^* )$ is contractive for all $i \in I$.
Repeated application of~\eqref{eqn:wmult} then shows that
$j_t( x )$ is bounded for each $x \in \alg_0$, and that $j_t$ extends to a
unital $*$-homomorphism from $\alg_0$ to
$\bop{\ini \ctimes \fock}{}$. Furthermore, the
set~$\{ j_t( s_i ) : i \in I \}$ satisfies the relations
$\{ p_k : k \in K \}$ so, by the universal nature of $\alg$, there
exists a $*$-homomorphism $\pi$ from $\alg$ into
$\bop{\ini \ctimes \fock}{}$ such that $\pi( s_i ) = j_t( s_i )$ for
all $i \in I$ and $\jt_t := \pi$ is as required.
\end{proof}

\begin{corollary}\label{cor:stsoln}
The family
$\bigl( \jt_t: \alg \to \bop{\ini \ctimes \fock}{} \bigr)_{t \ge 0}$
constructed in Theorems~\ref{thm:extension1}
and~\ref{thm:extension2} is a strong solution of the
QSDE~\eqref{eqn:EHqsde}.
\end{corollary}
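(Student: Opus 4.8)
Recall that for $\bigl( \jt_t \bigr)_{t \ge 0}$ to be a strong solution
of~\eqref{eqn:EHqsde} means that, for each $x \in \alg_0$, the family
$\bigl( \jt_t( x ) \bigr)_{t \ge 0}$ is adapted, the operator-valued process
$s \mapsto ( \jt_s \otimes \iota_{\bopp} )\bigl( \phi( x ) \bigr)$ is quantum
stochastically integrable on every compact interval, and
\[
\jt_t( x ) = x \otimes \id_\fock +
\int_0^t ( \jt_s \otimes \iota_{\bopp} )\bigl( \phi( x ) \bigr) \std \Lambda_s
\qquad \text{on } \ini \otimes \evecs
\]
for all $t \ge 0$. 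The plan is to deduce this from the weak
identity~\eqref{eqn:wqsde}, using that the coordinate-free quantum stochastic
integral of~\cite{Lin05} is uniquely determined on the exponential domain by
its first fundamental formula.

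Adaptedness is immediate: for $x \in \alg_0$ it is part of
Theorem~\ref{thm:intrep}, since $\jt_t( x )$ agrees with $j_t( x )$ on
$\ini \otimes \evecs$, while for general $x \in \alg$ it follows by norm
approximation, because each $\jt_t$ is a unital $*$-homomorphism of the
$C^*$~algebra~$\alg$, hence contractive, and adaptedness survives
operator-norm limits. Next I would check that the integrand
$G_s := ( \jt_s \otimes \iota_{\bopp} )\bigl( \phi( x ) \bigr)$ satisfies the
admissibility hypotheses of the quantum stochastic integral of~\cite{Lin05}:
adaptedness holds because $\phi( x ) \in \alg_0 \otimes \bopp$ is a finite sum
of simple tensors $a \otimes B$ with $a \in \alg_0$ and $s \mapsto \jt_s( a )$
is adapted by Theorem~\ref{thm:intrep}, adaptedness being stable under the
ampliation $\iota_{\bopp}$; the measurability and continuity needed are
inherited from the same theorem; and $G_s$ is uniformly bounded, with
$\| G_s \| \le \| \phi( x ) \|$, since $\jt_s \otimes \iota_{\bopp}$ is
$*$-homomorphic, hence contractive. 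Together with the local
square-integrability of $s \mapsto \wh{g}( s )$, these properties ensure that
$Z_t := x \otimes \id_\fock + \int_0^t G_s \std \Lambda_s$ is a well-defined
operator on $\ini \otimes \evecs$ for each $t \ge 0$.

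It remains to identify $Z_t$ with $\jt_t( x )$. Here I would apply the first
fundamental formula, which evaluates $\langle u \evec{f}, Z_t v \evec{g}
\rangle$ by contracting the two $\mmul$-slots of $G_s$ against $\wh{f}( s )$
and $\wh{g}( s )$; because $G_s = ( \jt_s \otimes \iota_{\bopp} )( \phi( x ) )$
and $\jt_s = j_s$ on $\ini \otimes \evecs$, this contraction produces exactly
$j_s\bigl( \phi_{\wh{g}( s )}^{\wh{f}( s )}( x ) \bigr)$, with
$\phi_{\wh{g}( s )}^{\wh{f}( s )}( x ) \in \alg_\phi = \alg_0$ by
Lemma~\ref{lem:abe}. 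Hence
\[
\langle u \evec{f}, Z_t v \evec{g} \rangle =
\langle u \evec{f}, ( x v ) \evec{g} \rangle +
\int_0^t \bigl\langle u \evec{f},
j_s\bigl( \phi_{\wh{g}( s )}^{\wh{f}( s )}( x ) \bigr) v \evec{g}
\bigr\rangle \std s,
\]
which is precisely the right-hand side of~\eqref{eqn:wqsde}; thus
$\langle u \evec{f}, Z_t v \evec{g} \rangle = \langle u \evec{f}, \jt_t( x )
v \evec{g} \rangle$ for all $u$,~$v \in \ini$ and $f$,~$g \in \elltwo$, and
since the exponential vectors are total in~$\fock$ while both operators have
$\ini \otimes \evecs$ in their domain, $Z_t = \jt_t( x )$ there, as required.
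Alternatively one could proceed directly from the series
representation~\eqref{eqn:jdef}, applying the quantum Wiener integral
recursion $\Lambda^n_t( T ) = \int_0^t \Lambda^{n - 1}_s( \wt{T} ) \std
\Lambda_s$ from the proof of Theorem~\ref{thm:qwi} term by term --- the
estimate~\eqref{eqn:qwi} and the definition of $\alg_\phi$ legitimising the
interchange of sum and integral --- and recognising that
$\sum_{n \ge 1} \Lambda^{n - 1}_s\bigl( \wt{\phi_n( x )} \bigr) =
( \jt_s \otimes \iota_{\bopp} )( \phi( x ) )$, by virtue of
$\phi_n = ( \phi_{n - 1} \otimes \iota_{\bopp} ) \comp \phi$.

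The main difficulty is not a hard estimate but the bookkeeping: one must cast
the Evans--Hudson integrand $( \jt_s \otimes \iota_{\bopp} )( \phi( x ) )$
within the coordinate-free calculus of~\cite{Lin05}, verify that this specific
class of integrands meets the admissibility conditions there --- the
$*$-homomorphism property of $\jt_s$ supplying the uniform bound needed --- and
match the ensuing first fundamental formula with the slice form
$\phi_{\wh{g}( s )}^{\wh{f}( s )}( x )$ appearing in~\eqref{eqn:wqsde}; once
that identification is made, the characterisation of the quantum stochastic
integral by its first fundamental formula on the exponential domain completes
the argument.
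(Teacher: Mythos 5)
Your high-level strategy coincides with the paper's: fix $x\in\alg_0$, form the integrand process $s\mapsto(\jt_s\otimes\iota_\bopp)(\phi(x))$, show it is quantum stochastically integrable in the sense of~\cite{Lin05}, and then observe that the weak identity~\eqref{eqn:wqsde} already in hand upgrades to a strong one (whether one phrases this as ``weak solution plus integrability gives strong solution'' or, as you do, ``the QS integral $Z_t$ exists, agrees weakly with $\jt_t(x)$, hence strongly on $\ini\otimes\evecs$'' amounts to the same thing). The adaptedness and uniform-bound observations are also the same as the paper's.

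There is, however, a genuine gap where you write that ``the measurability and continuity needed are inherited from the same theorem.'' The property actually required for QS integrability is that the map $s\mapsto L_s\bigl(u\otimes\wh{f}(s)\otimes\evec{f}\bigr)$ be strongly measurable, where $L_s$ is the (suitably swapped) ampliation of $\jt_s$ applied to $\phi(x)$. Here $s$ appears twice: once through $L_s$, which Theorem~\ref{thm:intrep} shows depends norm-continuously on $s$, and once through $\wh{f}(s)$, which is merely an $L^2$ function of $s$. Neither fact alone yields strong measurability of the composite, and their combination does not do so trivially: one must first show that $s\mapsto L_s$ is \emph{strongly} continuous on the whole of $\ini\ctimes\mmul\ctimes\fock$ (the simple-tensor continuity you note, together with the uniform bound $\|L_s\|=\|\phi(x)\|$, gives this), then deduce that $s\mapsto L_s\bigl(u\otimes\wh{f}(s)\otimes\evec{f}\bigr)$ is weakly measurable and separably valued, and finally invoke Pettis's measurability theorem. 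The paper devotes the entire second half of its proof to exactly this chain of reasoning, and it is the real mathematical content of the corollary; a sentence attributing it to Theorem~\ref{thm:intrep} does not close the gap. A smaller, purely notational, omission is that you never introduce the swap isomorphism $\Sigma:\bop{\ini\ctimes\fock\ctimes\mmul}{}\to\bop{\ini\ctimes\mmul\ctimes\fock}{}$ needed to align $(\jt_s\otimes\iota_\bopp)(\phi(x))$ with the integrand format of~\cite{Lin05}; you flag ``the bookkeeping'' in your closing paragraph but do not carry it out.

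Your alternative sketch --- differentiating the series~\eqref{eqn:jdef} term by term using $\Lambda^n_t(T)=\int_0^t\Lambda^{n-1}_s(\wt{T})\std\Lambda_s$ and resumming --- is a reasonable second route, but it faces the same measurability issue for the resummed integrand plus an additional interchange-of-sum-and-integral justification, so it does not sidestep the Pettis argument either.
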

\begin{proof}
Fix $x \in \alg_\phi$ and let
\begin{equation}\label{eqn:integrand}
L_t := \Sigma\bigl( ( \jt_t \otimes \iota_\bopp )( \phi( x ) ) \bigr)
\end{equation}
for all $t \ge 0$, where
$\Sigma : \bop{\ini \ctimes \fock \ctimes \mmul}{} \to %
\bop{\ini \ctimes \mmul \ctimes \fock}{}$
is the isomorphism that swaps the last two components of simple
tensors. If $f \in \elltwo$ then
\[
\| L_t u \otimes \wh{f}( t ) \otimes \evec{f} \| \le %
\| \phi( x ) \| \, \| \wh{f}( t ) \| \, \| u \evec{f} \|,
\]
so if $t \mapsto L_t u \otimes \wh{f}( t ) \otimes \evec{f}$
is strongly measurable then $t \mapsto L_t$ is quantum
stochastically integrable \cite[p.232]{Lin05} and $\jt$
satisfies the QSDE in the strong sense, since we already have
from~\eqref{eqn:wqsde} that it is a weak solution.

Now, Theorem~\ref{thm:intrep} implies that for each
$x \in \alg_\phi = \alg_0$ and $\theta \in \ini \otimes \evecs$ the
map $t \mapsto \jt_t( x ) \theta$ is continuous, hence so is
\[
t \mapsto ( \jt_t \otimes \iota_\bopp )( y \otimes T )%
( \theta \otimes \xi ) = \jt_t( y ) \theta \otimes T \xi
\]
for all $y \in \alg_0$, $T \in \bop{\mmul}{}$ and
$\xi \in \mmul$. As $\| L_t \| = \| \phi( x ) \|$
for all $t \ge 0$, it follows that $t \mapsto L_t$ and
$t \mapsto L^*_t$ are strongly continuous on
$\ini \ctimes \mmul \ctimes \fock$. Hence
$t \mapsto L_t (u \otimes \wh{f}( t ) \otimes \evec{f})$ is
separably valued and weakly measurable, so Pettis's theorem
gives the result.
\end{proof}

\begin{remark}
Property~\eqref{eqn:matsp} implies that the homomorphism $\jt_t$ given
by Theorems~\ref{thm:extension1} and~\ref{thm:extension2} takes values
in the matrix space $\alg \matten \bop{\fock}{}$ \cite{Lin05}.
\end{remark}

\begin{notation}
For all $t \ge 0$, $f$,~$g \in \elltwo$ and $a \in \alg$, let
\[
\jt_t[ f, g ]( a ) := %
\bigl( \id_\ini \otimes \bra{\evec{f_{t)}}} \bigr) %
\jt_t( a ) \bigl( \id_\ini \otimes \ket{\evec{g_{t)}}} \bigr).
\]
\end{notation}

\begin{theorem}\label{thm:cocycle}
The family of $*$-homomorphisms $( \jt_t )_{t \ge 0}$ given by
Theorems~\ref{thm:extension1} and
\ref{thm:extension2} forms a Feller
cocycle~\emph{\cite[Section~2.4]{LiW01}} for the shift
semigroup on~$\bop{\fock}{}$: for all $s$,~$t \ge 0$,
$f$,~$g \in \elltwo$ and~$a \in \alg$,
\[
\begin{array}{rl}
\text{\rm (i)} & \jt_0[ 0, 0 ]( a ) = a, \\[1ex]
\text{\rm (ii)} & \jt_t[ f, g ]( a ) \in \alg, \\[1ex]
\text{\rm (iii)} & t \mapsto \jt_t[ f, g ]( a )
\text{ is norm continuous} \\[1ex]
\text{and} \quad \text{\rm (iv)} & %
\jt_{s + t}[ f, g ] = \jt_s[ f, g ] %
\comp \jt_t[ f( \cdot + s ), g( \cdot + s ) ].
\end{array}
\]
Consequently, setting
\[
T_t( a ) := \jt_t[ 0, 0 ]( a ) = %
\bigl( \id_\ini \otimes \bra{\evec{0}} \bigr) \jt_t( a ) %
\bigl( \id_\ini \otimes \ket{\evec{0}} \bigr) %
\qquad \text{for all } a \in \alg
\]
gives a strongly continuous semigroup $T = ( T_t )_{t \ge 0}$ of
completely positive contractions on~$\alg$ such that
$T_t( x ) = \exp( t \phi_\vac^\vac )( x )$ for all $x \in \alg_0$ and
$t \ge 0$. In particular, $T_t( \id ) = \id$ for all $t \ge 0$
and~$\alg_0$ is a core for the generator of $T$.
\end{theorem}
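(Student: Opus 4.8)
The plan is to verify each of the four cocycle properties (i)--(iv) in turn, then derive the semigroup consequences. Property~(i) is immediate from Remark~\ref{rmk:jid}, since $\jt_0(a) = j_0(a) = a \otimes \id_\fock$ for $a \in \alg_0$ (the $n = 0$ term alone survives at $t = 0$), and this extends to all of $\alg$ by continuity of each $\jt_0$; compressing by $\evec{0}$ gives $\jt_0[0,0](a) = a$. Property~(ii) is exactly~\eqref{eqn:matsp} for $x \in \alg_0$ -- noting that $\bra{\evec{f}}$ and $\bra{\evec{f_{t)}}}$ agree against adapted operators up to the scalar $\langle \evec{f_{[t}}, \evec{g_{[t}}\rangle$ -- extended to $a \in \alg$ using that $\jt_t$ is a $*$-homomorphism into $\alg \matten \bop{\fock}{}$ and that $\alg$ is norm-closed. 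Property~(iii), norm continuity of $t \mapsto \jt_t[f,g](a)$, follows for $x \in \alg_0$ from the norm continuity of $t \mapsto j_t(x)(\id_\ini \otimes \ket{\evec{f}})$ asserted in Theorem~\ref{thm:intrep}, together with the analogous statement for the adjoint (available since $j_t(x)^* \supseteq j_t(x^*)$ and $x^* \in \alg_0$); for general $a \in \alg$ one approximates in norm, using the uniform bound $\|\jt_t(a)\| \le \|a\|$.

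The cocycle identity~(iv) is the substantive point. The strategy is to show that both sides, as functions of $t$ applied to a fixed $a$, satisfy the same weak integral equation~\eqref{eqn:wqsde} (with $s$-shifted initial data and test functions) and then invoke the uniqueness furnished by Lemma~\ref{lem:uniquesoln}. Concretely, fix $s \ge 0$ and define $k_t := \jt_{s+t}$ and $\ell_t := (\jt_s \ctimes \iota) \comp \sigma_s \comp \jt_t$, each a family of unital $*$-homomorphisms on $\alg$; it suffices to check that each satisfies the defining QSDE on the half-line $[0,\infty)$ with the shifted driving data. For $k_t$ one splits the quantum stochastic integral in~\eqref{eqn:EHqsde}, which $\jt$ satisfies by Corollary~\ref{cor:stsoln}, at time $s$: the $[0,s]$ part contributes a constant (namely $\jt_s(a)$) to the evolution started at time $s$, and the remaining integral over $[s, s+t]$, after the time-shift $r \mapsto r - s$, is precisely the integral equation for the shifted problem. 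For $\ell_t$ one uses that $\sigma_s$ and the ampliation intertwine the QSDE with its shifted version -- this is the standard behaviour of quantum stochastic integrals under the shift, which may be read off from the structure of the quantum Wiener integrals $\Lambda^n_t$ in Theorem~\ref{thm:qwi}. Since $\alg_\phi = \alg_0$ is dense in $\alg$, Lemma~\ref{lem:uniquesoln} forces $k_t = \ell_t$ on $\alg$, and compressing by $\evec{f_{t)}}, \evec{g_{t)}}$ (using adaptedness to pass between compressions by $\evec{f}$ and $\evec{f_{t)}}$, and the tensor-factorisation of exponential vectors $\evec{f} = \evec{f_{s)}} \otimes \evec{f_{[s}}$ to distribute the compressions across the two factors of $\ell_t$) yields~(iv). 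The main obstacle will be bookkeeping the compressions cleanly: one must track how $\bra{\evec{f_{[s}}}$ passes through $\jt_s \ctimes \iota$ versus through $\sigma_s \comp \jt_t$, and confirm the half-line shift of the test functions $f(\cdot + s)$ emerges correctly from the time-translation in the simplicial integrals.

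Finally, the semigroup statement is a direct consequence. From~(iv) with $f = g = 0$ and~(i) one gets $T_{s+t} = T_s \comp T_t$ and $T_0 = \iota_\alg$; complete positivity and contractivity of each $T_t$ follow because $T_t$ is the $\vac$-$\vac$ compression of a unital $*$-homomorphism $\jt_t$ (so $T_t = (\id_\ini \otimes \bra{\evec{0}}) \jt_t(\cdot) (\id_\ini \otimes \ket{\evec{0}})$ is completely positive, and $T_t(\id) = \jt_t[0,0](\id) = \id$ by~(i)-type reasoning since $\jt_t$ is unital, giving contractivity). Strong continuity on $\alg$ comes from~(iii) with $f = g = 0$ for $x \in \alg_0$ and a density argument. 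To identify the generator on $\alg_0$, take $u, v \in \ini$, $x \in \alg_0$, and evaluate~\eqref{eqn:wqsde} at $f = g = 0$: one obtains $\langle u, T_t(x) v\rangle = \langle u, x v\rangle + \int_0^t \langle u, T_s(\phi^\vac_\vac(x)) v\rangle \,\std s$, since $\wh{0}(s) = \vac$. Thus $t \mapsto T_t(x)$ is the norm-differentiable solution of $\frac{\rd}{\rd t} T_t(x) = T_t(\phi^\vac_\vac(x))$ with $T_0(x) = x$; because $\phi^\vac_\vac$ maps $\alg_0$ into $\alg_0$ and its iterates obey the growth bound defining $\alg_\phi = \alg_0$, Lemma~\ref{lem:abe} gives $T_t(x) = \exp(t\phi^\vac_\vac)(x)$, convergent in norm. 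Consequently $\alg_0 \subseteq \dom(\text{generator})$ with generator agreeing with $\phi^\vac_\vac$ there, and the standard fact that an invariant dense subspace on which the semigroup acts as a norm-convergent exponential series is a core completes the proof. I would expect no real difficulty here beyond routine verification; the genuine work is concentrated in~(iv).
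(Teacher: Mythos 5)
Your handling of (i)--(iii) and of the semigroup consequences matches the paper's proof in all essentials. The genuine difference is in (iv), and there your route is broader but would need one extra tool made precise. The paper never proves the uncompressed evolution equation~\eqref{eqn:cocycle}; instead it works directly with the \emph{compressed} families $\jt_t[f,g]$, showing that both $J_t := \jt_s[f,g]\comp\jt_t[f(\cdot+s),g(\cdot+s)]$ and $\jt_{s+t}[f,g]$ satisfy the same scalar-kernelled integral equation in $\alg$ and then killing the difference by a Gronwall iteration \emph{modelled on} (not invoking) Lemma~\ref{lem:uniquesoln}. Your plan instead first establishes~\eqref{eqn:cocycle} at the operator level and compresses at the end; this is strictly stronger, but it costs you two things that the paper's compressed-level argument avoids: (a) an intertwining statement for the time shift $\sigma_s$ against the quantum Wiener integrals $\Lambda^n_t$, which you only gesture at; and (b) a uniqueness result for the QSDE with the shifted integrand and non-trivial initial condition $\jt_s$, which Lemma~\ref{lem:uniquesoln} does \emph{not} literally supply (it is stated for families satisfying~\eqref{eqn:wqsde} as written, with initial condition $x$ and unshifted test functions). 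The fix is easy --- one reruns the proof of Lemma~\ref{lem:uniquesoln} word for word --- but the reference as stated is imprecise, and the paper sidesteps the issue by doing exactly that iteration inline for $K_t := J_t - \jt_{s+t}[f,g]$. Your observation that one must distribute $\bra{\evec{f_{s+t)}}}$ across the tensor factorisation $\evec{f_{s)}} \otimes \evec{f_{[s,s+t)}}$ and track it through $\jt_s\ctimes\iota$ versus $\sigma_s\comp\jt_t$ is the right bookkeeping concern; the paper's approach absorbs all of that into the explicit inner products $\langle\evec{f_{[s+r,s+t)}},\evec{g_{[s+r,s+t)}}\rangle$ appearing in its integral equations, which is why it works with \emph{continuous} $f,g$ first and then densifies. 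In sum: your approach is viable and a little more general in what it proves, but it needs the shift-intertwining lemma and a shifted-initial-condition uniqueness lemma stated and proved, whereas the paper's compressed formulation keeps everything at the level of $\alg$-valued integral equations and needs neither.
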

\begin{proof} Properties (i) and (ii) are immediate consequences
of~\eqref{eqn:wqsde} and~\eqref{eqn:matsp} respectively. For (iii),
note that if $x \in \alg_0$ and $f$,~$g \in \elltwo$ then
Theorem~\ref{thm:intrep} implies that
\[
t \mapsto \jt_t[ f, g ]( x ) = %
\bigl( \id_\ini \otimes \bra{\evec{f}} \bigr) j_t( x ) %
\bigl( \id_\ini \otimes \ket{\evec{g}} \bigr) %
\exp\Bigl( -\int_t^\infty \langle f( s ), g( s ) \rangle \std s \Bigr)
\]
is norm continuous; the general case follows by approximation.

In order to establish (iv), fix~$s \ge 0$ and continuous functions
$f$,~$g \in \elltwo$, and let
\[
J_t := \jt_s[ f, g ] \comp \jt_t[ f( \cdot + s ), g( \cdot + s ) ]
\qquad \text{for all } t \ge 0.
\]
We will show that $J_t = \jt_{s + t}[ f, g ]$.

First note that for any $x \in \alg_0$ and $t > 0$, the map
\[
F : [ 0, t ] \to \alg; \ r \mapsto \jt_r%
[ f( \cdot + s ), g( \cdot + s ) ] \bigl( %
\phi^{\wh{f}( r + s )}_{\wh{g}( r + s )}( x ) \bigr) %
\langle \evec{f_{[ s + r , s + t )}}, \evec{g_{[ s + r, s + t )}}
\rangle
\]
is continuous, hence Bochner integrable, and so
\[
x \langle \evec{f_{[ s, s + t )}}, \evec{g_{[ s, s + t )}} \rangle + %
\int^t_0 F( r ) \std r \in \alg.
\]
By the adaptedness of $\jt_t( x )$ and~\eqref{eqn:wqsde},
\begin{multline*}
\langle u, \Bigl( x \langle \evec{f_{[ s, s + t )}},
\evec{g_{[ s, s + t )}} \rangle + \int^t_0 F( r ) \std r \Bigr) %
v \rangle \\
\begin{aligned}
& = \langle u, x v \rangle \langle \evec{f( \cdot + s )_{t)}},
\evec{g( \cdot + s )_{t)}} \rangle \\
& \quad + \int^t_0 \langle u \evec{f( \cdot + s )_{r)}}, %
j_r \bigl( \phi^{\wh{f}(r + s)}_{\wh{g}(r + s)}( x ) \bigr) %
v \evec{g( \cdot + s )_{r)}} \langle \evec{f( \cdot + s)_{[ r, t )}},
\evec{g( \cdot + s)_{[ r , t )}} \rangle \std r \\
& = \langle u \evec{f( \cdot + s )_{t)}}, %
j_t( x ) v \evec{g( \cdot + s )_{t)}} \rangle \\
& = \langle u, \jt_t[ f( \cdot + s ), g( \cdot + s )]( x ) v %
\rangle.
\end{aligned}
\end{multline*}
Consequently,
\begin{align*}
\langle u, J_t( x ) v \rangle & = %
\langle u, \jt_s[ f, g ]( x ) v \rangle %
\langle \evec{f_{[ s, s + t )}}, \evec{g_{[ s, s + t )}} \rangle \\
 & \quad + \int^t_0 \langle u, %
\jt_s[ f, g ] \comp \jt_r[ f( \cdot + s ), g( \cdot + s ) ]\bigl( %
\phi^{\wh{f}( r + s )}_{\wh{g}( r + s )}( x ) \bigr) v \rangle %
\langle \evec{f_{[ s + r, s + t )}}, \evec{g_{[ s + r, s + t )}} %
\rangle \std r \\
& = \langle u, \jt_s[ f, g ]( x ) v \rangle %
\langle \evec{f_{[ s, s + t )}}, \evec{g_{[ s, s + t )}} \rangle \\
 & \quad + \int^t_0 \langle u, J_r\bigl( %
\phi^{\wh{f}(r + s )}_{\wh{g}(r + s )}( x ) \bigr) v \rangle %
\langle \evec{f_{[ s + r, s + t )}}, \evec{g_{[ s + r, s + t )}} %
\rangle \std r.
\end{align*}
On the other hand, by~\eqref{eqn:wqsde},
\begin{align*}
\langle u, \jt_{s + t}[ f, g ]( x ) v \rangle & = %
\langle u, x v \rangle %
\langle \evec{f_{s + t)}}, \evec{g_{ s + t)}} \rangle + %
\int^s_0 \langle u \evec{f_{s + t)}}, j_r\bigl( %
\phi^{\wh{f}( r )}_{\wh{g}( r )}( x ) \bigr) v \evec{g_{s + t)}} %
\rangle \std r \\
& \quad + \int^{s + t}_s \langle u \evec{f_{s + t)}}, j_r\bigl( %
\phi^{\wh{f}( r )}_{\wh{g}( r )}( x ) \bigr) v \evec{g_{s + t)}}
\rangle \std r \\
& = %
\langle u \evec{f_{s + t)}}, j_s( x ) v \evec{g_{s + t)}} \rangle + %
\int^t_0 \langle u \evec{f_{s + t)}}, j_{q + s}\bigl( %
\phi^{\wh{f}( q + s )}_{\wh{g}(q + s)}( x ) \bigr) v \evec{g_{s + t)}}
\rangle \std q \\
& = \langle u, \jt_s[ f, g ]( x ) v \rangle %
\langle \evec{f_{[ s, s + t )}}, \evec{g_{[ s, s + t )}} \rangle \\
& \quad + \int^t_0 \langle u, \jt_{q + s}[ f, g ]\bigl( %
\phi^{\wh{f}( q + s )}_{\wh{g}( q + s )}( x ) \bigr) v \rangle %
\langle \evec{f_{[ s + q, s + t )}}, \evec{g_{[ s + q, s + t )}}
\std q.
\end{align*}
Now set $K_t := J_t - \jt_{s+t}[ f, g ]$, so that
\[
\langle u, K_t( x ) v \rangle = \int^t_0 \langle u, %
K_r \bigl( \phi^{\wh{f}( r + s )}_{\wh{g}( r + s )}%
( x ) \bigr) v \rangle \, G( r ) \std r,
\]
where
$G : r \mapsto \langle %
\evec{f_{[ s + r, s + t )}}, \evec{g_{[ s + r, s + t )}} \rangle$
is continuous. As
\[
\| K_t \| \le 2 \exp \Bigl( \hlf \bigl( \| f \|^2 + \| g \|^2 \bigr)
\Bigr) \qquad \text{for all } t \ge 0,
\]
iterating the above and estimating as in the proof of
Lemma~\ref{lem:uniquesoln} shows that $K \equiv 0$. The density of
$\alg_0$ in $\alg$ and of continuous functions in $\elltwo$ now gives
(iv).

That $T$ is a semigroup follows from this cocycle property (iv): note
that
\[
T_{s + t} = \jt_{s + t}[ 0, 0 ] = %
\jt_s[ 0, 0 ] \comp \jt_t[ 0, 0 ] = T_s \comp T_t %
\qquad \text{for all } s, t \ge 0.
\]
Contractivity, complete positivity and strong continuity of $T$ are
immediate; the exponential identity holds because
\begin{equation}\label{eqn:sie}
\langle u, T_t( x ) v \rangle = \langle u, x v \rangle + %
\int_0^t \langle u, T_s\bigl( \phi_\vac^\vac( x ) \bigr) v \rangle
\std s
\end{equation}
for all $u$,~$v \in \ini$, $t \ge 0$ and $x \in \alg_0$,
by~\eqref{eqn:wqsde}. That $\alg_0$ is a core for the generator of $T$
follows from Lemma~\ref{lem:abe} and~\cite[Corollary~3.1.20]{BrR02a}.
\end{proof}

\begin{remark}
A $*$-homomorphic Feller cocycle as in Theorem~\ref{thm:cocycle} is
called a \emph{quantum flow}; a strongly continuous semigroup
$( T_t )_{t \ge 0}$ of completely positive contractions is known as a
\emph{quantum dynamical semigroup}, and the condition
$T_t( \id ) = \id$ for all $t \ge 0$ means that the semigroup is
\emph{conservative}; conservative quantum dynamical semigroups are
also known as \emph{quantum Markov semigroups}. Hence
Theorem~\ref{thm:cocycle} gives the existence of a quantum flow which
dilates a quantum Markov semigroup on the $C^*$~algebra $\alg$.
\end{remark}

\begin{remark}
By Theorem~\ref{thm:cocycle}, the component
$\phi_\omega^\omega = \tau$ of the flow generator $\phi$ is closable,
with $\overline{\tau}$ being the generator of the quantum Markov
semigroup $T$. However, closability of the bimodule map $\delta$ seems
to be a much more delicate issue and remains an open question.
\end{remark}

\begin{theorem}\label{thm:abelian}
Consider the family of $*$-homomorphisms $( \jt_t )_{t \ge 0}$
constructed in Theorems~\ref{thm:extension1}
and~\ref{thm:extension2}. If $\alg_c$ is a commutative $*$-subalgebra
of $\alg$ such that
\[
\begin{array}{rl}
\text{\rm (i)} & %
\phi( \alg_c \cap \alg_0 ) \subseteq \alg_c \otimes \bopp \\[1ex]
\text{and} \quad \text{\rm (ii)} & %
\alg_c \cap \alg_0 \text{ is dense in } \alg_c
\end{array}
\]
then the family $\{ \jt_t( a ) : t \ge 0, \ a \in \alg_c \}$ is
commutative, \ie the commutator
$[ \jt_s( a ), \jt_t( b ) ] = 0$ for all $s$, $t \ge 0$ and
$a$,~$b \in \alg_c$.
\end{theorem}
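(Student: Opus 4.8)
The plan is to reduce the commutativity of $\{\jt_t(a)\}$ to a statement about the quantum random walk $(\phi_n)$ applied to elements of $\alg_c \cap \alg_0$, exploiting the quantum Wiener integral representation~\eqref{eqn:jdef}. By continuity of $\jt_s$ and $\jt_t$ (Theorem~\ref{thm:intrep}) and density of $\alg_c \cap \alg_0$ in $\alg_c$ (hypothesis~(ii)), it suffices to prove $[\jt_s(a), \jt_t(b)] = 0$ for $a, b \in \alg_c \cap \alg_0$. By adaptedness and the cocycle property (Theorem~\ref{thm:cocycle}(iv)), one can further reduce to the case $s = t$: if $s \le t$, write $\jt_t(b)$ using adaptedness to split off the part living on $\fock_{[s,\infty)}$, which commutes with $\jt_s(a)$ automatically, leaving a factor on $\fock_{[0,s)}$ to which the equal-time case applies. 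So the crux is to show $[\jt_t(a), \jt_t(b)] = 0$ for $a, b \in \alg_c \cap \alg_0$ and all $t \ge 0$.

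For the equal-time case, **I would** test against exponential vectors: it is enough to show $\langle \theta, \jt_t(a)\jt_t(b)\zeta\rangle = \langle\theta, \jt_t(b)\jt_t(a)\zeta\rangle$ for $\theta, \zeta \in \ini\otimes\evecs$. Using Lemma~\ref{lem:mult} twice (weak multiplicativity), $\langle\theta, \jt_t(a)\jt_t(b)\zeta\rangle = \langle\jt_t(a^*)\theta, \jt_t(b)\zeta\rangle = \langle\theta, j_t(ab)\zeta\rangle$ and similarly the other ordering gives $\langle\theta, j_t(ba)\zeta\rangle$. Since $\alg_c$ is commutative and $a, b \in \alg_c \cap \alg_0$ with $ab = ba$ in $\alg_0$, these agree. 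Hence $[\jt_t(a), \jt_t(b)] = 0$ on the dense domain $\ini\otimes\evecs$, and since both sides are bounded operators, the commutator vanishes. (This argument uses $ab, ba \in \alg_0$; to invoke Lemma~\ref{lem:mult} I also need $a^*b, b^*a, \ldots \in \alg_\phi = \alg_0$, which follows since $\alg_0$ is a $*$-algebra and $\alg_\phi = \alg_0$ by hypothesis of Theorems~\ref{thm:extension1}/\ref{thm:extension2}.)

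**The hard part will be** the reduction from general $s, t$ to the equal-time case, which requires care with the conditional expectation structure: when $s < t$, one uses the cocycle identity to write $\jt_t(b) = (\jt_s \ctimes \iota)\comp\sigma_s\comp\jt_{t-s}(b)$ in the sense of~\eqref{eqn:cocycle}, so that $\jt_t(b)$ acts as $\jt_s$ applied to an operator that, on the $\fock_{[0,s)}$-component, is built from the same building blocks as $\jt_s$ restricted to times before $s$. The point is that both $\jt_s(a)$ and the relevant compression of $\jt_t(b)$ are adapted to $[0,s)$ and take values that, by hypothesis~(i) ($\phi$ leaves $\alg_c\otimes\bopp$ invariant on $\alg_c\cap\alg_0$), lie in the algebra generated by $\{j_r(c) : r \le s,\ c \in \alg_c\cap\alg_0\}$; the equal-time argument then shows this algebra is commutative on $\ini\otimes\evecs$. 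An alternative, perhaps cleaner route: prove directly by the iteration/Gr\"onwall technique used in Lemma~\ref{lem:uniquesoln} and Theorem~\ref{thm:cocycle}(iv) that $t \mapsto [\jt_s(a), \jt_t(b)]$ satisfies a closed integral equation with zero initial data — using hypothesis~(i) to ensure $\phi^{\wh f(r)}_{\wh g(r)}(b) \in \alg_c\cap\alg_0$ so the iteration stays inside the relevant subalgebra — and hence vanishes. I expect the bookkeeping with the exponential-vector inner products and the splitting of Fock space at time $\min(s,t)$ to be the only genuinely delicate ingredient; everything else is a direct appeal to Lemma~\ref{lem:mult}, commutativity of $\alg_c$, and the density/continuity already established.
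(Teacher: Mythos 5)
Your ``alternative, perhaps cleaner route'' is precisely the paper's proof: one fixes $s<t$, sets
\[
K_t( b ) := \langle u\evec{f}, [\jt_s(a), \jt_t(b)] v\evec{g}\rangle ,
\]
uses the strong-solution property (Corollary~\ref{cor:stsoln}) and the adaptedness-multiplication identity to derive the closed integral equation $K_t(b) = \int_s^t K_r\bigl(\phi^{\wh{f}(r)}_{\wh{g}(r)}(b)\bigr)\,\rd r$ with $K_s(b)=0$ (the equal-time case), and iterates as in Lemma~\ref{lem:uniquesoln}; hypothesis~(i) is exactly what keeps the iterates $\phi^{\wh{f}(r)}_{\wh{g}(r)}(b)$ inside $\alg_c\cap\alg_0$ so the uniform bound $|K_r(c)|\le 2\|u\|\,\|v\|\,\|\evec f\|\,\|\evec g\|\,\|a\|\,\|c\|$ is available at each stage. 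You also correctly identify the equal-time case as trivial (homomorphism property plus $ab=ba$ in $\alg_c\cap\alg_0$).

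Your primary route --- splitting $\jt_t(b)$ at time $s$ via the cocycle identity and ``adaptedness'' --- is more delicate than you acknowledge. First, $\jt_t(b)$ for $t>s$ is \emph{not} a simple tensor of an operator on $\ini\ctimes\fock_{[0,s)}$ with one on $\fock_{[s,t)}$; it is a general element of the matrix space $\alg\matten\bop{\fock}{}$ which mixes these components. To make the ``commutes automatically'' step rigorous you would need to know that $\jt_t(b)$ actually lies in the smaller matrix space $\alg_c\matten\bop{\fock}{}$ when $b\in\alg_c$ --- this follows from~\eqref{eqn:cdqwi} together with hypothesis~(i), but you have not said so --- and then argue that $\jt_s(a)\ctimes\id$ commutes both with $\jt_s(\alg_c)\ctimes\id$ (the equal-time case) and with the second tensor factor (adaptedness), and pass to limits in the matrix-space topology. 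Second, the strong cocycle identity~\eqref{eqn:cocycle} that this route invokes is only stated informally in the paper; what Theorem~\ref{thm:cocycle}(iv) actually proves is the compressed version. The Gr\"onwall route avoids both issues and is what the paper does; I would regard the primary route as a plausible but unfinished sketch rather than a complete alternative.
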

\begin{proof}
The result is immediate when $s = t$, so assume without loss of
generality that $s < t$ and let $b \in \alg_c \cap \alg_0$; if
\[
K_t( b ) := %
\langle u \evec{f}, [\jt_s (a), \jt_t (b)] v \evec{g} \rangle = 0,
\]
where $u$, $v \in \ini$, $f$, $g \in \elltwo$ and $a \in \alg_c$ are
arbitrary, then the result follows by~(ii) and the continuity
of~$\jt_t$.

Write $\jt_t( b ) = \jt_s( b ) + \int_s^t L_r \std \Lambda_r$, where
$L = ( L_r )_{r \ge 0}$ is the process defined
in~\eqref{eqn:integrand} with $x$ changed to $b$. It is
straightforward, using adaptedness, to show that
\[
A \int^t_s L_r \std \Lambda_r B = %
\int^t_s \Sigma( A \otimes 1_\mmul ) \, L_r \, %
\Sigma( B \otimes 1_\mmul ) \std \Lambda_r
\]
for any
$A$, $B \in \bop{\ini \ctimes \fock_{[ 0, s )}}{} \ctimes %
1_{\fock_{[ s, \infty )}}$, where $\Sigma$ is the swap isomorphism
defined after~\eqref{eqn:integrand}. Since $\jt$ is a strong solution
of the QSDE~\eqref{eqn:EHqsde}, by Corollary~\ref{cor:stsoln}, it
follows that
\[
K_t( b ) = \int^t_s %
K_r\bigl( \phi^{\wh{f}( r )}_{\wh{g}( r )}( b ) \bigr) \std r.
\]
Assumption~(i) allows us to iterate this identity; noting also that
\[
| K_r( c ) | \le 2 \| u \| \, \| v \| \, %
\| \evec{f} \| \, \| \evec{g} \| \, \| a \| \, \| c \| %
\qquad \text{for all } c \in \alg_c \cap \alg_0,
\]
one readily obtains the estimate
\[
| K_t( b ) | \le 2 \| u \| \, \| v \| \, \| \evec{f} \| \, %
\| \evec{g} \| \, \| a \| C_b M_b^n \frac{1}{n!} %
\Bigl( %
\int_s^t \| \wh{f}( r ) \| \, \| \wh{g}( r ) \| \std r \Bigr)^n,
\]
where $C_b$ and $M_b$ are constants associated to $b$ through its
membership of $\alg_\phi$. Letting $n \to \infty$ gives the result.
\end{proof}

\begin{remark}
If $\alg$ is commutative then conditions~(i) and~(ii) of
Theorem~\ref{thm:abelian} are satisfied automatically when
$\alg_c = \alg$, so Theorems~\ref{thm:extension1} and
\ref{thm:extension2} produce classical Markov semigroups in this case.
However, Theorem~\ref{thm:abelian} also allows for the possibility of
dealing with different commutative subalgebras that do not commute
with one another, a necessary feature of quantum dynamics.
\end{remark}

\section{Random walks on groups}\label{sec:walks}

\begin{definition}\label{dfn:walks}
Let
$\alg = C_0( G ) \oplus \C 1 \subseteq %
\bopp\bigl( \ell^2( G ) \bigr)$,
where $G$ is a discrete group and $x \in C_0( G )$ acts
on~$\ell^2( G )$ by multiplication, and let
$\alg_0 = \lin\{ 1, \, e_g : g \in G \}$, where
$e_g( h ) := \tfn{g = h}$ for all $h \in G$. That is, $\alg$ is the
unitisation of the $C^*$~algebra of functions on $G$ which vanish at
infinity and $\alg_0$ is the dense unital subalgebra generated by the
functions with finite support; as positivity in the $C^*$-algebraic
sense corresponds here to the pointwise positivity of
functions,~$\alg_0$ contains its square roots.

Let $H$ be a non-empty finite subset of $G \setminus \{ e \}$ and let
the Hilbert space~$\mul$ have orthonormal basis $\{ f_h : h \in H \}$;
the maps
\[
\lambda_h : G \to G; \ g \mapsto h g \qquad ( h \in H )
\]
correspond to the permitted moves in the random walk constructed
on~$G$.
\end{definition}

\begin{lemma}\label{lem:walks}
Given a \emph{transition function}
\[
t : H \times G \to \C; \ ( h, g ) \mapsto t_h( g ),
\]
the map
\[
\phi : \alg_0 \to \alg_0 \otimes \bopp; \ %
x \mapsto \begin{bmatrix}
 \sum_{h \in H} | t_h |^2 ( x \comp \lambda_h - x ) & %
\sum_{h \in H} \overline{t_h} ( x \comp \lambda_h - x ) \otimes %
\bra{f_h} \\[1ex]
 \sum_{h \in H} t_h ( x \comp \lambda_h - x ) \otimes \ket{f_h} & %
\sum_{h \in H} ( x \comp \lambda_h - x ) \otimes \dyad{f_h}{f_h}
\end{bmatrix}
\]
is a flow generator such that
\[
\phi( e_g ) = e_g \otimes m_e( g ) + \sum_{h \in H} e_{h^{-1}g} %
\otimes m_h( h^{-1} g ) \qquad \text{for all } g \in G,
\]
where
\[
m_e( g ) := \begin{bmatrix}
 -\sum_{h \in H} | t_h( g ) |^2 & %
-\sum_{h \in H} \overline{t_h( g )} \bra{f_h} \\[1ex]
 -\sum_{h \in H} t_h( g ) \ket{f_h} & -1_\mul
\end{bmatrix} \quad \text{and} \quad %
m_h( g ) := \begin{bmatrix}
 | t_h( g ) |^2 & %
\overline{t_h( g )} \bra{f_h} \\[1ex]
 t_h( g ) \ket{f_h} & \dyad{f_h}{f_h}
\end{bmatrix}.
\]
Hence
\[
\phi_n( e_g ) = %
\sum_{h_1 \in H \cup \{ e \}} \cdots \sum_{h_n \in H \cup \{ e \}} %
e_{h_n^{-1} \cdots h_1^{-1} g} \otimes %
m_{h_n}( h_n^{-1} \cdots h_1^{-1} g ) \otimes \cdots %
\otimes m_{h_1}( h_1^{-1} g )
\]
for all $n \in \N$ and $g \in G$.
\end{lemma}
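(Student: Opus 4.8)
The plan is to establish the three assertions in turn: that $\phi$ is a flow generator, then the closed form for $\phi( e_g )$, and finally the closed form for $\phi_n( e_g )$. For the first I would deduce the conclusion from Lemma~\ref{lem:gen} after reading off from the matrix the maps $\pi( x ) := \sum_{h \in H} ( x \comp \lambda_h ) \otimes \dyad{f_h}{f_h}$, $\delta( x ) := \sum_{h \in H} t_h ( x \comp \lambda_h - x ) \otimes \ket{f_h}$ and $\tau( x ) := \sum_{h \in H} | t_h |^2 ( x \comp \lambda_h - x )$, and then checking the properties demanded there. Note first that $\alg_0$ is invariant under each substitution $x \mapsto x \comp \lambda_h$, since $e_g \comp \lambda_h = e_{h^{-1} g}$ and $\id \comp \lambda_h = \id$, and that multiplying a finitely supported function by an arbitrary function (such as $t_h$) again yields a finitely supported function, so that $\pi$, $\delta$, $\tau$ genuinely map $\alg_0$ into $\alg_0 \otimes \bop{\mul}{}$, $\alg_0 \otimes \ket{\mul}$ and $\alg_0$ respectively; this is where the freedom to take the $t_h$ unbounded is absorbed, and is the reason I would argue \emph{via} Lemma~\ref{lem:gen} rather than quoting Lemma~\ref{lem:bddphi}, to which $\phi$ corresponds only formally (with $h = 0$ and $z = -\sum_{h \in H} t_h \otimes \ket{f_h}$). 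Since $\{ f_h : h \in H \}$ is orthonormal and each $\lambda_h$ is a group translation, $\pi$ is a unital $*$-homomorphism (and the $( 2, 2 )$ block of the matrix is $\pi( x ) - x \otimes 1_\mul$ because $\sum_{h \in H} \dyad{f_h}{f_h} = 1_\mul$); a one-line computation using $( x y ) \comp \lambda_h = ( x \comp \lambda_h )( y \comp \lambda_h )$ and the commutativity of $\alg_0$ shows that $\delta$ is a $\pi$-derivation, while both sides of~\eqref{eqn:tau} reduce to $\sum_{h \in H} | t_h |^2 ( x \comp \lambda_h - x )( y \comp \lambda_h - y )$; $*$-linearity of $\tau$ and $\phi( \id ) = 0$ are then immediate.

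For $\phi( e_g )$ I would put $x = e_g$ into the matrix and use the two elementary identities $e_g \comp \lambda_h = e_{h^{-1} g}$ and $\psi\, e_k = \psi( k )\, e_k$ for any function $\psi$ on $G$, so that each matrix entry becomes a sum of a term supported at $g$ and terms supported at the points $h^{-1} g$, $h \in H$. Collecting, in every entry simultaneously, the coefficient of $e_g$ into a matrix $m_e( g )$ and the coefficient of $e_{h^{-1} g}$ into a matrix $m_h( h^{-1} g )$ produces exactly the claimed expression; this is a routine entrywise check.

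For $\phi_n( e_g )$ I would induct on $n$, the base case $n = 1$ being the previous paragraph once $\phi( e_g )$ is written as $\sum_{h_0 \in H \cup \{ e \}} e_{h_0^{-1} g} \otimes m_{h_0}( h_0^{-1} g )$. For the inductive step, $\phi_{n + 1} = ( \phi_n \otimes \iota_{\bopp} ) \comp \phi$ gives $\phi_{n + 1}( e_g ) = \sum_{h_0 \in H \cup \{ e \}} \phi_n( e_{h_0^{-1} g} ) \otimes m_{h_0}( h_0^{-1} g )$; substituting the inductive hypothesis for $\phi_n( e_{h_0^{-1} g} )$, using $h_j^{-1} \cdots h_1^{-1} ( h_0^{-1} g ) = h_j^{-1} \cdots h_0^{-1} g$, and relabelling $( h_0, \ldots, h_n )$ as $( h_1, \ldots, h_{n + 1} )$ yields the asserted formula. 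The two points to watch are that $\phi_n \otimes \iota_{\bopp}$ appends the $n$ new copies of $\bopp$ to the \emph{left} of the one already produced by $\phi$ — which is what makes the indices appear in decreasing order, matching the convention of Definition~\ref{dfn:qrw} — and that the relabelling is carried out consistently. I do not anticipate a genuine obstacle: the lemma is a structured but essentially mechanical computation, and the only mildly delicate point, the possible unboundedness of the transition functions $t_h$, is handled by verifying the flow-generator axioms directly on $\alg_0$ rather than invoking Lemma~\ref{lem:bddphi}.
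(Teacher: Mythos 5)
Your proposal is correct and follows essentially the same approach as the paper, which dismisses the first claim as "readily verified with the aid of Lemma~\ref{lem:gen}" and the rest as "immediate"; you have supplied the details the authors suppressed, including the correct identification of $\pi$, $\delta$, $\tau$, the invariance of $\alg_0$ under these maps even when the $t_h$ are unbounded, and a careful treatment of the tensor-component ordering in the induction for $\phi_n(e_g)$.
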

\begin{proof}
The first claim is readily verified with the aid of
Lemma~\ref{lem:gen}; the second is immediate.
\end{proof}

\begin{theorem}\label{thm:walks}
Let $\alg$ be as in Definition~\ref{dfn:walks} and $\phi$ as in
Lemma~\ref{lem:walks}. If the transition function~$t$ is chosen such
that $\alg_\phi = \alg_0$ then there exists an adapted family of
unital $*$-homomorphisms
$\bigl( \jt_t : \alg \to \bop{\ini \ctimes \fock}{} \bigr)_{t \ge 0}$
which forms a Feller cocycle in the sense of Theorem~\ref{thm:cocycle}
and satisfies the quantum stochastic differential
equation~\eqref{eqn:EHqsde} in the strong sense on~$\alg_0$ for
all~$t \ge 0$. Setting
\[
T_t( a ) := %
\bigl( \id_\ini \otimes \bra{\evec{0}} \bigr) \jt_t( a ) %
\bigl( \id_\ini \otimes \ket{\evec{0}} \bigr) %
\qquad \text{for all } a \in \alg \text{ and } t \ge 0
\]
gives a classical Markov semigroup~$T$ on $\alg$ whose generator is
the closure of
\[
\tau : \alg_0 \to \alg_0; \ %
x \mapsto \sum_{h \in H} | t_h |^2 ( x \comp \lambda_h - x ).
\]
\end{theorem}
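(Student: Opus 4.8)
The plan is to specialise the general construction of Section~\ref{sec:qf} to the data of Definition~\ref{dfn:walks} and Lemma~\ref{lem:walks}. First I would check that the hypotheses of Theorem~\ref{thm:extension1} hold: $\phi$ is a flow generator by Lemma~\ref{lem:walks}; the dense subalgebra $\alg_0 = \lin\{1, e_g : g \in G\}$ contains its square roots, as observed in Definition~\ref{dfn:walks} (positivity in $\alg = C_0(G)\oplus\C 1$ is pointwise positivity of functions, and the square root of $c\,1 + \sum_{g\in F}c_g e_g \ge 0$ is again a constant plus a finitely supported function, hence lies in $\alg_0$); and $\alg_\phi = \alg_0$ by the standing assumption on the transition function~$t$. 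Hence Theorem~\ref{thm:extension1} supplies, for each $t \ge 0$, a unital $*$-homomorphism $\jt_t : \alg \to \bop{\ini \ctimes \fock}{}$ with $\jt_t(x) = j_t(x)$ on $\ini \otimes \evecs$ for all $x \in \alg_0$, where $j_t$ is as in Theorem~\ref{thm:intrep}. Adaptedness of $\bigl( j_t(x) \bigr)_{t \ge 0}$ for $x \in \alg_0$ is part of Theorem~\ref{thm:intrep}; since each $\jt_t$ is a norm contraction and $\alg_0$ is norm-dense in $\alg$, the inner-product identity defining adaptedness extends to all of $\alg$, so $(\jt_t)_{t \ge 0}$ is an adapted family.

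Next I would invoke Corollary~\ref{cor:stsoln} to conclude that $(\jt_t)_{t \ge 0}$ is a strong solution of the QSDE~\eqref{eqn:EHqsde}, which on $\alg_0$ coincides with the weak form~\eqref{eqn:wqsde} satisfied by $j_t$. Theorem~\ref{thm:cocycle} then applies directly: the family $(\jt_t)_{t \ge 0}$ is a Feller cocycle for the shift semigroup, and the vacuum compression $T_t(a) := ( \id_\ini \otimes \bra{\evec{0}} ) \jt_t(a) ( \id_\ini \otimes \ket{\evec{0}} )$ defines a strongly continuous semigroup of completely positive contractions on $\alg$ with $T_t(\id) = \id$ for all $t \ge 0$, for which $\alg_0$ is a core and $T_t(x) = \exp( t \phi_\vac^\vac )(x)$ for $x \in \alg_0$.

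It remains to identify the generator and to establish classicality. Since $\vac = (1,0) \in \mmul = \C \oplus \mul$, the vector $\ket{\vac}$ selects the first coordinate, so $\phi_\vac^\vac(x) = ( \id_\ini \otimes \bra{\vac} ) \phi(x) ( \id_\ini \otimes \ket{\vac} )$ is precisely the top-left entry of the matrix for $\phi$ displayed in Lemma~\ref{lem:walks}, namely $\tau(x) = \sum_{h \in H} | t_h |^2 ( x \comp \lambda_h - x )$; thus the generator of $T$ is $\overline{\tau}$, with core $\alg_0$. For classicality, note that $\alg = C_0(G) \oplus \C 1$, acting by multiplication operators on $\ell^2(G)$, is commutative; taking $\alg_c = \alg$ in Theorem~\ref{thm:abelian}, conditions~(i) and~(ii) are satisfied trivially, so $\{ \jt_t(a) : t \ge 0,\ a \in \alg \}$ is a commutative family and $T$ is a classical Markov semigroup. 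There is no genuine obstacle in the proof itself --- it is an assembly of the results of Section~\ref{sec:qf} --- but all the real work is concealed in the hypothesis $\alg_\phi = \alg_0$, which is a growth bound on the iterates $\phi_n( e_g )$; the explicit formula for $\phi_n( e_g )$ recorded in Lemma~\ref{lem:walks} is the tool one would use to verify this for any concrete choice of transition function.
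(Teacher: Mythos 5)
Your proposal is correct and follows essentially the same route as the paper: the published proof is a one-line assembly citing Theorems~\ref{thm:extension1} and~\ref{thm:abelian} together with Lemma~\ref{lem:walks}, and you have simply spelled out the intermediate steps (verifying the square-root property of $\alg_0$, reading off $\phi_\vac^\vac = \tau$ from the matrix in Lemma~\ref{lem:walks}, and invoking Corollary~\ref{cor:stsoln} and Theorem~\ref{thm:cocycle} for the strong QSDE solution and Feller-cocycle claims already stated in the theorem). No gap; your elaboration of which hypotheses each cited result requires is a useful expansion of the paper's terse argument.
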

\begin{proof}
This follows from Theorems~\ref{thm:extension1} and~\ref{thm:abelian}
together with Lemma~\ref{lem:walks}.
\end{proof}

\begin{remark}\label{rem:walks}
Given $g \in G$, let
$A := \bigl[ B \ 1_\mul \bigr] \in %
\bop{\C \oplus \mul}{\mul}$,
where $B := \sum_{h \in H} t_h( g ) \ket{f_h}$. Then
$m_e( g ) = -A^* A$ and
\[
\| m_e( g ) \| = \| A A^* \| = \| B B^* + 1_\mul \| %
= \| B^* B \| + 1 = 1 + \sum_{h \in H} | t_h( g ) |^2.
\]
It may be shown similarly that
$\| m_h( g ) \| = 1 + | t_h( g ) |^2$ for all $g \in G$ and $h \in H$,
so if
\begin{equation}\label{eqn:walks}
M_g := \lim_{n \to \infty} %
\sup\bigl\{ | t_h( h_n^{-1} \cdots h_1^{-1} g ) | : %
h_1, \ldots, h_n \in H \cup \{ e \}, \ h \in H \bigr\} < \infty
\end{equation}
then
\[
\| \phi_n( e_g ) \| \le ( 1 + | H | + 2 | H | M_g^2 )^n %
\qquad \text{for all } n \in \Z_+,
\]
where $| H |$ denotes the cardinality of $H$. Hence
$\alg_\phi = \alg_0$ if \eqref{eqn:walks} holds for all $g \in G$.
\end{remark}

\begin{remark}
If $t$ is bounded then clearly~\eqref{eqn:walks} holds for all
$g \in G$. In this case, there exist bounded operators
$L \in \bop{\ini}{\ini \ctimes \mul}$,
$S \in \bop{\ini \ctimes \mul}{}$ and
$F \in \bop{\ini \ctimes \mmul}{}$ such that
\[
L = \sum_{h \in H} t_h \otimes \ket{f_h}, \quad %
S = \sum_{h \in H} S_h \otimes \dyad{f_h}{f_h} %
\quad \text{and} \quad %
F = \begin{bmatrix}
 -\frac{1}{2} L^* L & -L^* \\[1ex]
 S L & S - 1_{\ini \otimes \mul}
\end{bmatrix},
\]
where $t_h$ acts by multiplication and $S_h$ is the unitary operator
on $\ell^2( G )$ such that $e_g \mapsto e_{h g}$.

It follows from \cite[Theorems~7.1 and 7.5]{LiW00b} that the
Hudson--Parthasarathy QSDE
\[
U_0 = I_{\ini \otimes \fock}, \qquad %
\rd U_t = %
(F \otimes 1_\fock) \Sigma ( U_t \otimes I_\mmul ) \std \Lambda_t,
\]
where $\Sigma$ is the swap isomorphism defined
after~\eqref{eqn:integrand}, has a unique solution which is a unitary
cocycle. Furthermore, by \cite[Theorem~7.4]{LiW00b}, setting
\[
k_t( a ) := U^*_t ( a \otimes 1_\fock ) U_t %
\quad \text{for all } a \in \bop{\ini}{} \text{ and } t \ge 0
\]
defines a quantum flow $k$ with generator
\[
\varphi : \bop{\ini}{} \to \bop{\ini \ctimes \mmul}{}; \ %
a \mapsto %
( a \otimes 1_\mmul ) F + F^* ( a \otimes 1_\mmul ) + %
F^* \Delta ( a \otimes 1_\mmul ) F.
\]
A short calculation shows that $\varphi$ is of the form covered by
Lemma~\ref{lem:bddphi}, with
\[
\pi( a ) = S^* ( a \otimes 1_\mul ) S, \qquad \delta( a ) = %
-L a + \pi( a ) L \quad \text{and} \quad \tau( a ) = -\hlf\{ L^* L, a
\} + L^* \pi( a ) L
\]
for all $a \in \bop{\ini}{}$. It follows that
$\varphi|_{\alg_0} = \phi$, where $\phi$ is the flow generator of
Lemma~\ref{lem:walks}, and so the cocycle $\jt$ given by
Theorem~\ref{thm:walks} is the restriction of $k$ to $\alg$. However,
this construction by conjugation does not give the Feller property,
that $\alg$ is preserved by $k$.
\end{remark}

\begin{example}
If $G = ( \Z, + )$, $H = \{ \pm 1 \}$ and the transition function $t$
is bounded, with $t_{+1}( g ) = 0$ for all $g < 0$ and
$t_{-1}( g ) = 0$ for all $g \le 0$, then the Markov semigroup $T$
given by Theorem~\ref{thm:walks} corresponds to the classical
birth-death process with birth and dates rates $| t_{+1} |^2$ and
$| t_{-1} |^2$, respectively. The cocycle constructed here is
Feller, as it acts on $\alg = C_0( \Z ) \oplus \C 1$, in contrast to
\cite[Example~3.3]{PaS90}, where the cocycle acts on the whole of
$\ell^\infty( \Z )$.
\end{example}

\begin{remark}
If $G = ( \Z, + )$, $H = \{ +1 \}$ and $t_{+1} : g \mapsto 2^g$ then
$M_g = 2^g$ and the condition \eqref{eqn:walks} holds for all
$g \in G$. Thus Theorem~\ref{thm:walks} applies to examples where the
transition function $t$ is unbounded.
\end{remark}

\section{The symmetric quantum exclusion process}\label{sec:sqep}

This section was inspired by Rebolledo's treatment of the quantum
exclusion process: see~\cite[Examples~2.4.3 and~4.1.3]{Reb05}.

\begin{definition}
Let $I$ be a non-empty set. The \emph{CAR algebra} is the unital
$C^*$~algebra $\alg$ with generators $\{ b_i : i \in I \}$, subject to
the anti-commutation relations
\begin{equation}\label{eqn:car}
\{ b_i, b_j \} = 0 \qquad \text{and} \qquad %
\{ b_i, b_j^* \} = \tfn{i = j} \qquad %
\text{for all } i, j \in I.
\end{equation}
It follows from~\eqref{eqn:car} that the $b_i$ are nonzero partial
isometries for all $i \in I$.

As is well known \cite[Proposition~5.2.2]{BrR02b}, $\alg$ is
represented faithfully and irreducibly on
$\fock_-\bigl( \ell^2( I ) \bigr)$, the Fermionic Fock space over
$\ell^2( I )$; in other words, we may (and do) suppose that
$\alg \subseteq \bop{\ini}{}$, where
$\ini := \fock_-\bigl( \ell^2 ( I ) \bigr)$, and the algebra identity
$\id = \id_\ini$.
\end{definition}

\begin{remark}
The elements of $I$ may be taken to correspond to sites at which
Fermionic particles may exist, with the operators $b_i$ and $b_i^*$
representing the annihilation and creation, respectively, of a
particle at site~$i$.
\end{remark}

\begin{notation}
Let $\alg_0$ be the unital algebra generated by
$\{ b_i, b_i^* : i \in I \}$; by definition, this is a norm-dense
unital $*$-subalgebra of $\alg$.
\end{notation}

\begin{lemma}\label{lem:fdsa}
For each $x \in \alg_0$ there exists a finite subset
$J \subseteq I$ such that $x$ lies in the finite-dimensional
$*$-subalgebra
\[
\alg_J := %
\lin\bigl\{ b_{j_1}^* \cdots b_{j_q}^* b_{i_1} \cdots b_{i_p} : %
0 \le p, q \le |J|, \ \{ i_1, \ldots, i_p \} \in
J^{(p)}, \ \{ j_1, \ldots, j_q \} \in J^{(q)} \} \subseteq \alg_0,
\]
where $J^{(p)}$ denote the set of subsets of~$J$ with cardinality $p$
\etc. Consequently, $\alg$ is an AF algebra and $\alg_0$ contains its
square roots.
\end{lemma}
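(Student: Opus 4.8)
The plan is to show that every element of $\alg_0$ lies in one of the algebras $\alg_J$; the fact that $\alg$ is AF and that $\alg_0$ contains its square roots will then follow quickly.

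\textbf{Step 1: every element of $\alg_0$ lies in some $\alg_J$.} Since $\alg_0$ is the unital algebra generated by $\{ b_i, b_i^* : i \in I \}$, an arbitrary $x \in \alg_0$ is a finite linear combination of words in the $b_i$ and $b_i^*$. Let $J$ be the (finite) set of indices appearing in these words. I would then argue that $\alg_J$, as defined, is closed under products and adjoints — this is the main computational point, handled by repeatedly applying the relations \eqref{eqn:car}. Concretely, given two spanning monomials $b_{j_1}^* \cdots b_{j_q}^* b_{i_1} \cdots b_{i_p}$ and $b_{l_1}^* \cdots b_{l_s}^* b_{k_1} \cdots b_{k_r}$ with all indices in $J$, their product can be brought back into the normally-ordered form $(\text{creators})(\text{annihilators})$ by moving each $b_{k}$ leftwards past the $b_l^*$'s, picking up a term $\tfn{k = l}$ whenever a creator and annihilator with equal index meet and a sign otherwise; repeated annihilators (or creators) with a common index vanish by $\{ b_i, b_i \} = 2 b_i^2 = 0$, so the resulting monomials still have distinct creator indices and distinct annihilator indices, all in $J$, and $p, q \le |J|$. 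Hence $\alg_J$ is a $*$-subalgebra of $\alg_0$ containing all the generators indexed by $J$, so it contains $x$. It is finite-dimensional because it is the span of finitely many monomials.

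\textbf{Step 2: $\alg$ is AF.} Enumerate $I$ (or use the net of finite subsets directed by inclusion if $I$ is uncountable, noting that the $\alg_J$ form an increasing net whose union is norm-dense); each $\alg_J$ is a finite-dimensional $C^*$-algebra, being a $*$-subalgebra of the $C^*$-algebra $\alg$ that is closed under the algebra operations and finite-dimensional (hence automatically norm-closed). Their union is $\alg_0$, which is norm-dense in $\alg$ by definition. Thus $\alg$ is the closure of an increasing net of finite-dimensional $*$-subalgebras, \ie an AF algebra; if one wants a sequence rather than a net, choose any countable cofinal subfamily of finite subsets of $I$ when $I$ is countable.

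\textbf{Step 3: $\alg_0$ contains its square roots.} Let $x \in \alg_0$ with $x \ge 0$. By Step 1, $x \in \alg_J$ for some finite $J$, and $\alg_J$ is a finite-dimensional $C^*$-algebra, hence norm-closed in $\alg$. The positive square root $x^{1/2}$, computed in $\alg$ (equivalently by functional calculus, which is a norm-limit of polynomials in $x$ with no constant term since $x^{1/2} = \lim p_n(x)$ for suitable polynomials vanishing at $0$), lies in the norm-closure of the algebra generated by $x$, which is contained in $\alg_J \subseteq \alg_0$. Therefore $x^{1/2} \in \alg_0$, as required. (This is exactly the mechanism already noted in Remark~\ref{rmk:af}.)

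\textbf{Main obstacle.} The only genuine work is the normal-ordering argument in Step 1 showing $\alg_J$ is multiplicatively closed with the stated index and degree bounds; everything else is a direct application of standard $C^*$-algebra facts (finite-dimensional $*$-subalgebras are closed, closed subalgebras are stable under continuous functional calculus for functions vanishing at $0$). I expect Step 1 to be dispatched by an induction on the total word length, using \eqref{eqn:car} to push annihilators past creators and to kill repeated indices.
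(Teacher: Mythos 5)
Your proposal is correct and follows essentially the same route as the paper: reduce arbitrary words to the normally ordered canonical form $b_{j_1}^*\cdots b_{j_q}^* b_{i_1}\cdots b_{i_p}$ with distinct indices via the CAR relations, observe that this yields the finite-dimensional $*$-algebra $\alg_J$, and then deduce the AF and square-root properties exactly as in Remark~\ref{rmk:af}. The only difference is one of presentation: the paper's proof is terse and simply cites the remark, whereas you spell out Steps 2 and 3 (including the cautious remark about nets versus sequences for uncountable $I$, a point the paper glosses over but which does not affect the square-root property actually needed downstream).
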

\begin{proof}
By employing the anti-commutation relations~\eqref{eqn:car}, any
finite product of terms from the generating set
$\{ b_i, b_i^* : i \in I \}$ may be reduced to a linear combination of
words of the form
\begin{equation}\label{eqn:canonical}
b_{j_1}^* \cdots b_{j_q}^* b_{i_1} \cdots b_{i_p},
\end{equation}
where $i_1$, \ldots, $i_p$ are distinct elements of $I$, as are $j_1$,
\ldots, $j_q$, and $p$,~$q \in \Z_+$, with an empty product equal
to~$\id$. As every element of $\alg_0$ is a finite linear combination
of such terms, the first claim follows. The second claim holds by
Remark~\ref{rmk:af}.
\end{proof}

\begin{definition}
Let $\{ \alpha_{i, j} : i, j \in I \} \subseteq \C$ be a fixed
collection of \emph{amplitudes}. We may view
$( I, \{ \alpha_{i, j} : i, j \in I \} )$ as a weighted directed
graph, where $I$ is the set of vertices, an edge exists from $i$
to~$j$ if $\alpha_{i, j} \neq 0$ and $\alpha_{i, j}$ is a complex
weight on the edge from vertex~$i$ to vertex~$j$, which may differ
from the weight $\alpha_{j, i}$ from $j$ to $i$.

For all $i \in I$, let
\[
\supp( i ) := \{ j \in I : \alpha_{i, j} \neq 0 \}
\ \text{ and } \ %
\supp^+( i ) := \supp( i ) \cup \{ i \}.
\]
Thus $\supp( i )$ is the set of sites with which site $i$ interacts
and $| \supp( i ) |$ is the valency of the vertex $i$. We require that
the valencies are finite:
\begin{equation}\label{eqn:finite}
| \supp( i ) | < \infty
\quad \text{for all } i \in I.
\end{equation}

The transport of a particle from site~$i$ to site~$j$ with amplitude
$\alpha_{i, j}$ is described by the operator
\[
t_{i, j} :=\alpha_{i, j} \, b_j^* b_i.
\]
\end{definition}

\begin{definition}
Let $\{ \eta_i : i \in I \} \subseteq \R$ be fixed. The total energy
in the system is given by
\[
h := \sum_{i \in I} \eta_i \, b_i^* b_i,
\]
where $\eta_i$ gives the energy of a particle at site~$i$. If
the set $\{ i \in I: \eta_i \neq 0 \}$ is infinite then the proper
interpretation of~$h$ involves issues of convergence; below it
will only appear in a commutator with elements of $\alg_0$, which is
sufficient to give a well-defined quantity.
\end{definition}

\begin{lemma}\label{lem:tau}
Let
\[
\tau_{i, j}( x ) := %
t_{i, j}^* [ t_{i, j}, x ] + [ x, t_{i, j}^* ] t_{i, j} = %
| \alpha_{i, j} |^2 \bigl( b_i^* b_j [ b_j^* b_i, x ] + %
[ x, b_i^* b_j ] b_j^* b_i \bigr)
\]
for all $i$,~$j \in I$ and $x \in \alg$, and let
\begin{equation}\label{eqn:energy}
[ h, x ] := \sum_{i \in I } \eta_i [ b_i^* b_i, x ]
\end{equation}
for all $x \in \alg_0$. Setting
\begin{equation}
\tau( x ) := %
\I [ h, x ] - \hlf \sum_{i, j \in I} \tau_{i, j}( x )
\end{equation}
defines a $*$-linear map $\tau : \alg_0 \to \alg_0$.
\end{lemma}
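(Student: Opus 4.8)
The plan is to reduce both sums defining $\tau$ to \emph{finite} ones by invoking Lemma~\ref{lem:fdsa}: each $x \in \alg_0$ lies in $\alg_J$ for some finite $J \subseteq I$. The displayed identity for $\tau_{i,j}$ is immediate, since $\alpha_{i,j}$ is a scalar: substituting $t_{i,j} = \alpha_{i,j}\, b_j^* b_i$ and extracting $\alpha_{i,j}$, resp.\ $\overline{\alpha_{i,j}}$, from each commutator gives $t_{i,j}^*[t_{i,j},x] = |\alpha_{i,j}|^2\, b_i^* b_j\, [b_j^* b_i, x]$ and $[x, t_{i,j}^*]\,t_{i,j} = |\alpha_{i,j}|^2\, [x, b_i^* b_j]\, b_j^* b_i$. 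It then suffices to show that all but finitely many of the terms $\eta_i[b_i^* b_i, x]$ and $\tau_{i,j}(x)$ vanish, that the survivors lie in a single finite-dimensional $*$-subalgebra of $\alg_0$, and that the resulting map is $*$-linear.

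For the energy term, fix $x \in \alg_J$. When $i \notin J$, both $b_i$ and $b_i^*$ anti-commute with every generator $b_k$, $b_k^*$ ($k \in J$) of $\alg_J$ by~\eqref{eqn:car}, so their product $b_i^* b_i$ commutes with each word in these generators, whence $[b_i^* b_i, x] = 0$. Thus $[h, x] = \sum_{i \in J} \eta_i\, [b_i^* b_i, x]$, a finite sum lying in $\alg_J$; in particular only the commutator of $h$ with elements of $\alg_0$ is ever needed, so the convergence caveat in the definition of $h$ causes no difficulty.

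The substantive point is the double sum. A summand $\tau_{i,j}(x)$ can be nonzero only if $t_{i,j} \neq 0$, that is, $\alpha_{i,j} \neq 0$, i.e.\ $j \in \supp( i )$. If moreover $\{ i, j \} \cap J = \emptyset$ then, as above, $b_i$, $b_i^*$, $b_j$, $b_j^*$ all anti-commute with the generators of $\alg_J$, so $b_j^* b_i$ and $b_i^* b_j$, being products of two such factors, commute with $\alg_J$; hence $[ t_{i,j}, x ] = 0 = [ x, t_{i,j}^* ]$ and $\tau_{i,j}( x ) = 0$. So the contributing pairs all satisfy $j \in \supp( i )$ and $\{ i, j \} \cap J \neq \emptyset$: by~\eqref{eqn:finite} there are finitely many with $i \in J$, and finitely many with $j \in J$ as well, since for each such $j$ the admissible $i$ range over the (finite) set of in-neighbours of $j$ --- which in the symmetric setting is just $\supp( j )$. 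Let $J'$ be the union of $J$ with all sites occurring in these finitely many pairs; then $J'$ is finite, each surviving $\tau_{i,j}( x )$ lies in $\alg_{J'}$, hence so does $\sum_{i,j} \tau_{i,j}( x )$, and therefore $\tau( x ) = \I [ h, x ] - \hlf \sum_{i,j} \tau_{i,j}( x ) \in \alg_{J'} \subseteq \alg_0$. Finally, $\tau$ is visibly linear, and $*$-preservation is checked termwise: since each $\eta_i$ is real and each $b_i^* b_i$ self adjoint, $( \I [ h, x ] )^* = \I [ h, x^* ]$, while taking adjoints of the two commutators in $\tau_{i,j}( x )$ yields $\tau_{i,j}( x )^* = \tau_{i,j}( x^* )$. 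The only delicate part is the analysis of the $\tau_{i,j}$ sum --- the parity bookkeeping that kills $\tau_{i,j}(x)$ off $J$, and pinning down a uniform finite set of sites from~\eqref{eqn:finite}.
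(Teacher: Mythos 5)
Your proof takes essentially the same route as the paper's: localise $x$ to a finite-dimensional $\alg_J$ via Lemma~\ref{lem:fdsa}, use the CAR parity trick to see that $b_j^* b_i$ commutes with $\alg_J$ whenever $i, j \notin J$, conclude that the two sums defining $\tau$ have only finitely many nonvanishing summands, all lying in a larger finite-dimensional subalgebra, and verify $*$-linearity termwise. The derivation of the displayed identity for $\tau_{i,j}$, the treatment of the energy term and the $*$-linearity check are all fine.

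One point on which you are more explicit than the paper deserves a remark. To make the double sum finite, one must control pairs with $j \in J$, $i \notin J$ and $\alpha_{i, j} \neq 0$, \ie the in-neighbourhoods of vertices in $J$; but condition~\eqref{eqn:finite} bounds only $| \supp( i ) |$, the out-degree. You fix this by invoking the symmetry condition~\eqref{eqn:symmetric}, under which the in-neighbourhood of $j$ coincides with $\supp( j )$ and is therefore finite. The same point is silently present in the paper's proof, which restricts the sum to $i, j \in J^+$ with $J^+$ built from out-neighbourhoods alone: that restriction is valid under symmetry but not in general --- \eg for $x = b_1$ one finds $\tau_{i, 1}( b_1 ) = | \alpha_{i, 1} |^2 \, b_i^* b_i \, b_1$, which need not vanish for $i$ outside $J^+$. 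Since Lemma~\ref{lem:tau} is stated without the symmetry hypothesis, this is a small imprecision you have inherited from, and in effect flagged in, the paper; it is not a flaw of your own, and your argument is otherwise correct.
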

\begin{proof}
Let $x \in \alg_0$ and note that $x \in \alg_J$ for some finite set
$J \subseteq I$, by Lemma~\ref{lem:fdsa}. Furthermore,
\[
[ b_j^* b_i, x ] = b^*_j \{ b_i, x \} - \{ b^*_j, x \} b_i = 0 %
\qquad \text{whenever } i \not \in J \text{ and } j \not \in J,
\]
so
\[
[ h, x ] = %
\sum_{i \in J} \eta_i [ b_i^* b_i , x ] \in \alg_J %
\qquad \text{and} \qquad %
\tau( x ) = \I [ h, x ] - %
\hlf \sum_{i, j \in J^+} \tau_{i, j}( x ) \in %
\alg_{J^{+}},
\]
where
\begin{equation}\label{eqn:Jdash}
J^+ := \bigcup_{k \in J} \supp^+( k ).
\end{equation}
Hence $\tau( \alg_J ) \subseteq \alg_{J^+}$ and, as~\eqref{eqn:finite}
implies that $J^+$ is finite, it follows that $\alg_0$ is invariant
under~$\tau$. The $*$-linearity of $\tau$ is immediately verified.
\end{proof}

\begin{lemma}\label{lem:delta}
Let
\[
\delta_{i, j}( x ) := %
[ t_{i, j} , x ] = \alpha_{i, j} ( b_j^* b_i x - x b_j^* b_i )
\]
for all $i$, $j \in I$ and $x \in \alg$, and let $\mul$ be a Hilbert
space with orthonormal basis $\{ f_{i, j} : i, j \in I \}$. Setting
\begin{equation}\label{eqn:deldef}
\delta( x ) := %
\sum_{i, j \in I} \delta_{i, j}( x ) \otimes \ket{f_{i, j}}
\end{equation}
for all $x \in \alg_0$ defines a linear map $\delta : \alg_0 \to
\alg_0 \otimes \ket{\mul}$ such that
\begin{align}
\delta( x y ) & = %
\delta( x ) y + ( x \otimes \id_\mul ) \delta( y ) %
\label{eqn:derv} \\[1ex]
\text{and} \qquad \delta^\dagger( x ) \delta( y ) & = %
\tau( x y ) - \tau( x ) y - x \tau( y )
\end{align}
for all $x$, $y \in \alg_0$, where $\tau$ is as defined in
Lemma~\ref{lem:tau}.
\end{lemma}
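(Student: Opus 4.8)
The plan is to reduce each assertion to an identity among the transport operators $t_{i,j} = \alpha_{i,j}\,b_j^* b_i$, verified term by term, and then to control the sums over $i, j \in I$ using Lemma~\ref{lem:fdsa} and the finiteness~\eqref{eqn:finite}. The only delicate point is well-definedness: that $\delta(x)$ really lies in the \emph{algebraic} tensor product $\alg_0 \otimes \ket{\mul}$ when $x \in \alg_0$. So first I would fix $x \in \alg_0$, pick by Lemma~\ref{lem:fdsa} a finite $J \subseteq I$ with $x \in \alg_J$, and invoke the elementary identity $[b_j^* b_i, x] = b_j^*\{b_i, x\} - \{b_j^*, x\}b_i$; splitting $x$ into its even and odd parts and using the anticommutation relations~\eqref{eqn:car} shows that $[b_j^* b_i, x] = 0$ whenever $i \notin J$ and $j \notin J$. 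Since moreover $\delta_{i,j}(x) = \alpha_{i,j}[b_j^* b_i, x]$ vanishes unless $j \in \supp(i)$, the finiteness hypothesis~\eqref{eqn:finite} leaves only finitely many nonzero summands, each lying in a fixed finite-dimensional subalgebra $\alg_{J'}$ with $J' \subseteq I$ finite. Hence $\delta(x) \in \alg_0 \otimes \ket{\mul}$, and the linearity of $\delta$ is then evident.

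For the $\pi$-derivation property~\eqref{eqn:derv}, with $\pi : x \mapsto x \otimes \id_\mul$, each $\ket{f_{i,j}}$ passes through unchanged, so it is enough to observe the Leibniz rule for commutators, $[t_{i,j}, xy] = [t_{i,j}, x]\,y + x\,[t_{i,j}, y]$, and sum over $(i,j)$ --- a finite sum by the first step. For the structure relation, I would first note that the Hamiltonian term contributes nothing: interpreted via~\eqref{eqn:energy}, $[h, \cdot\,]$ is a finite-sum derivation on $\alg_0$ (as in the proof of Lemma~\ref{lem:tau}), so $\I[h, xy] - \I[h,x]y - x\,\I[h,y] = 0$. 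It then remains to compare $-\hlf\sum_{i,j}\bigl(\tau_{i,j}(xy) - \tau_{i,j}(x)y - x\,\tau_{i,j}(y)\bigr)$ with $\delta^\dagger(x)\delta(y)$. Writing $t := t_{i,j}$ and expanding $\tau_{i,j}(z) = t^*tz + zt^*t - 2t^*zt$, a direct expansion gives $\tau_{i,j}(xy) - \tau_{i,j}(x)y - x\,\tau_{i,j}(y) = -2[x, t^*]\,[t, y]$. On the other hand, from $\delta^\dagger(x) = \delta(x^*)^*$ and $[t_{i,j}, x^*]^* = [x, t_{i,j}^*]$ one gets $\delta^\dagger(x) = \sum_{i,j}[x, t_{i,j}^*] \otimes \bra{f_{i,j}}$, and contracting against $\delta(y)$ via $\langle f_{i,j}, f_{k,l}\rangle = \tfn{(i,j) = (k,l)}$ yields $\delta^\dagger(x)\delta(y) = \sum_{i,j}[x, t_{i,j}^*]\,[t_{i,j}, y] = -\hlf\sum_{i,j}\bigl(\tau_{i,j}(xy) - \tau_{i,j}(x)y - x\,\tau_{i,j}(y)\bigr) = \tau(xy) - \tau(x)y - x\,\tau(y)$, as required. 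All sums here are finite by the first step, so the formal manipulations are legitimate.

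I expect the genuine obstacle to lie entirely in the first step: showing that the a priori infinite defining sum for $\delta(x)$ collapses to a finite one, which hinges on the anticommutation bookkeeping localizing the nonvanishing of $\delta_{i,j}(x)$ to indices near the finite set $J$ supporting $x$, together with the finiteness of the interaction graph. Once that is in place, the derivation rule is just the Leibniz rule for commutators, and the $\tau$-relation is a one-line commutator expansion.
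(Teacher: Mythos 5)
Your proof is correct and follows essentially the same route as the paper: finiteness of the defining sum via Lemma~\ref{lem:fdsa} and the hypothesis~\eqref{eqn:finite}, the Leibniz rule for commutators giving~\eqref{eqn:derv}, the observation that the Hamiltonian term contributes nothing to the cocycle identity since $[h,\cdot]$ is a derivation, and the term-by-term commutator identity $\tau_{i,j}(xy) - \tau_{i,j}(x)y - x\tau_{i,j}(y) = -2\delta_{i,j}^\dagger(x)\delta_{i,j}(y)$ (the paper's~\eqref{eqn:diss}), summed and contracted against the orthonormal basis. Your intermediate rewriting $\tau_{i,j}(z) = t^*tz + zt^*t - 2t^*zt$ makes the calculation the paper calls ``short'' fully explicit, which is a welcome addition, but it is the same argument.
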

\begin{proof}
The series in~\eqref{eqn:deldef} contains only finitely many
terms, since if $x \in \alg_J$ then
\[
\delta_{i, j}( x ) = 0 \qquad
\text{when } \{ i, j \} \not \subseteq J^+.
\]
Hence $\delta$ is well defined, and~\eqref{eqn:derv} holds because
each $\delta_{i, j}$ is a derivation. A short calculation shows that
\begin{equation}\label{eqn:diss}
\tau_{i, j}( x y ) - \tau_{i, j}( x ) y - x \tau_{i, j}( y ) = %
-2 \delta_{i, j}^\dagger( x ) \delta_{i, j}( y )
\end{equation}
for all $x$, $y \in \alg$. Since $x \mapsto [ b_i^* b_i, x ]$ is a
derivation for all $i \in I$, it follows from~\eqref{eqn:diss} that
\[
\tau( x y ) - \tau( x ) y - x \tau( y ) = %
\sum_{i, j \in I} \delta_{i, j}^\dagger( x ) \delta_{i, j}( y ) = %
\delta^\dagger( x ) \delta( y ) %
\qquad \text{for all } x, y \in \alg_0. %
\qed
\]
\renewcommand{\qed}{}
\end{proof}

\begin{lemma}\label{lem:sqepphi}
The map
\begin{equation}\label{eqn:sqepphi}
\phi : \alg_0 \to \alg_0 \otimes \bopp; \ %
x \mapsto \begin{bmatrix}
 \tau( x ) & \delta^\dagger( x ) \\[1ex]
 \delta( x ) & 0
\end{bmatrix},
\end{equation}
where $\tau$, $\delta$ and $\delta^\dagger$ are as defined in
Lemmas~\ref{lem:tau} and~\ref{lem:delta}, is a flow generator.

If the amplitudes satisfy the \emph{symmetry condition}
\begin{equation}\label{eqn:symmetric}
| \alpha_{i, j} | = | \alpha_{j, i} | \qquad \text{for all } i, j \in I
\end{equation}
then, for all $n \in \N$ and $i_0 \in I$,
\begin{equation}\label{eqn:phid}
\phi_n( b_{i_0} ) = \sum_{i_1 \in \supp^+( i_0 )} \cdots %
\sum_{i_n \in \supp^+( i_{n - 1} )} b_{i_n} \otimes %
B_{i_{n - 1}, i_n} \otimes \cdots \otimes B_{i_0, i_1},
\end{equation}
where
\[
B_{i, j} := \tfn{j = i} \lambda_i \dyad{\vac}{\vac} + %
\dyad{\vac}{\alpha_{i, j} f_{i, j}} - %
\dyad{\alpha_{j, i} f_{j, i}}{\vac}
\]
and
\[
\lambda_i := %
-\I \eta_i - \hlf \sum_{j \in \supp( i )} | \alpha_{j, i} |^2
\]
for all $i$, $j \in I$.
\end{lemma}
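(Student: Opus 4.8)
The plan is to establish the two assertions in turn. For the first — that $\phi$ is a flow generator — I would appeal directly to Lemma~\ref{lem:gen}. The map defined in~\eqref{eqn:sqepphi} has precisely the matrix form~\eqref{eqn:gencpts}, with $\pi$ taken to be the ampliation $x\mapsto x\otimes\id_\mul$, which is a unital $*$-homomorphism; with $\delta$ and $\delta^\dagger$ as constructed in Lemma~\ref{lem:delta}, so that $\delta$ is a $\pi$-derivation by~\eqref{eqn:derv}; and with $\tau$ as constructed in Lemma~\ref{lem:tau}, which satisfies the structure relation~\eqref{eqn:tau} by the final identity of Lemma~\ref{lem:delta}. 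The only additional requirement is that $\phi(\id)=0$, and this holds because $[h,\id]=0$, $\tau_{i,j}(\id)=0$ and $\delta_{i,j}(\id)=[t_{i,j},\id]=0$, so every entry of $\phi(\id)$ vanishes.

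For the formula~\eqref{eqn:phid} I would induct on $n$, the base case $n=1$ amounting to the identity $\phi(b_{i_0})=\sum_{i_1\in\supp^+(i_0)}b_{i_1}\otimes B_{i_0,i_1}$. To verify this I would expand $B_{i,j}$ into its four corners relative to $\mmul=\C\oplus\mul$ — the scalar $\tfn{j=i}\lambda_i$ in the $(\vac,\vac)$ position, the bra $\overline{\alpha_{i,j}}\bra{f_{i,j}}$, the ket $-\alpha_{j,i}\ket{f_{j,i}}$, and $0$ — and match these against the components $\tau(b_{i_0})$, $\delta^\dagger(b_{i_0})$, $\delta(b_{i_0})$ and $0$ of $\phi(b_{i_0})$ in~\eqref{eqn:sqepphi}. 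The underlying algebra reduces to two consequences of the anti-commutation relations~\eqref{eqn:car}, namely $[b_j^*b_i,b_{i_0}]=-\tfn{j=i_0}b_i$ and $[b_j^*b_i,b_{i_0}^*]=\tfn{i=i_0}b_j^*$: the first gives $\delta(b_{i_0})=-\sum_i\alpha_{i,i_0}\,b_i\otimes\ket{f_{i,i_0}}$, and taking adjoints gives $\delta^\dagger(b_{i_0})=\sum_j\overline{\alpha_{i_0,j}}\,b_j\otimes\bra{f_{i_0,j}}$. The symmetry hypothesis~\eqref{eqn:symmetric} enters here, since it identifies $\{\,i:\alpha_{i,i_0}\neq0\,\}$ with $\supp(i_0)$ and so lets both off-diagonal corners be re-indexed over $\supp^+(i_0)$, matching those of $\sum_{i_1}b_{i_1}\otimes B_{i_0,i_1}$.

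The delicate entry is $\tau(b_{i_0})$, and I expect the cancellation there to be the crux. From $[b_i^*b_i,b_{i_0}]=-\tfn{i=i_0}b_{i_0}$ one obtains $\I[h,b_{i_0}]=-\I\eta_{i_0}b_{i_0}$; then the two commutators above express $-\hlf\sum_{i,j}\tau_{i,j}(b_{i_0})$ as a combination of terms $b_i^*b_{i_0}b_i$ and $b_jb_j^*b_{i_0}$. Rewriting using $b_jb_j^*=\id-b_j^*b_j$, the partial-isometry relation $b_{i_0}b_{i_0}^*b_{i_0}=b_{i_0}$ and $b_{i_0}^2=0$, the terms proportional to $b_j^*b_jb_{i_0}$ pick up the coefficient $|\alpha_{i_0,j}|^2-|\alpha_{j,i_0}|^2$ and so cancel precisely because of~\eqref{eqn:symmetric}, leaving $\tau(b_{i_0})=\bigl(-\I\eta_{i_0}-\hlf\sum_{j\in\supp(i_0)}|\alpha_{j,i_0}|^2\bigr)b_{i_0}=\lambda_{i_0}b_{i_0}$; all the sums here are finite thanks to~\eqref{eqn:finite}. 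This cancellation is the main obstacle; the rest is bookkeeping. For the inductive step, $\phi_{n+1}(b_{i_0})=(\phi_n\otimes\iota_{\bopp})\bigl(\phi(b_{i_0})\bigr)=\sum_{i_1\in\supp^+(i_0)}\phi_n(b_{i_1})\otimes B_{i_0,i_1}$ by the base case, and since each $b_{i_1}$ is again one of the generators the inductive hypothesis applies to $\phi_n(b_{i_1})$, yielding~\eqref{eqn:phid} with $n$ replaced by $n+1$.
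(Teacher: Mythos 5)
Your proposal is correct and follows essentially the same route as the paper: verify the flow-generator axioms via Lemma~\ref{lem:gen} with $\pi$ the ampliation, then compute $\tau(b_{i_0})$, $\delta(b_{i_0})$ and $\delta^\dagger(b_{i_0})$ on the generators, using the symmetry condition~\eqref{eqn:symmetric} to collapse the $b_j^* b_j b_{i_0}$ terms, and iterate. The only cosmetic difference is that you phrase the passage to general $n$ as an explicit induction, where the paper takes it as read once $\phi(b_i)=\sum_{j\in\supp^+(i)}b_j\otimes B_{i,j}$ is established.
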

\begin{proof}
The first claim is an immediate consequence of Lemmas~\ref{lem:tau},
\ref{lem:delta} and \ref{lem:gen}.

If $i$,~$j$,~$k \in I$ then a short calculation shows that
\[
\tau_{j, k}( b_i ) = \left\{ \begin{array}{ll}
 | \alpha_{i, i} |^2 b_i & ( j = i, k = i ), \\[1ex]
 | \alpha_{j, i} |^2 b_j^* b_j b_i & ( j \neq i, k = i ), \\[1ex]
 | \alpha_{i, k} |^2 b_k b_k^* b_i & ( j = i, k \neq i ), \\[1ex]
 0 & ( j \neq i, k \neq i ).
\end{array} \right.
\]
Since
\[
[ h, b_i ] = \sum_{j \in I} \eta_j [ b_j^* b_j, b_i ] = %
\eta_i [ b_i^* b_i, b_i ] = - \eta_i b_i,
\]
the symmetry condition~\eqref{eqn:symmetric} implies that
\[
\tau( b_i ) = \lambda_i b_i \qquad \text{for all } i \in I.
\]
Furthermore, if $i$, $j$, $k \in I$ then
\[
\delta_{j, k}( b_i ) = %
\alpha_{j, k} ( b_k^* b_j b_i - b_i b_k^* b_j ) = %
-\alpha_{j, k} \{ b_k^*, b_i \} b_j = %
- \tfn{k = i} \alpha_{j, i} \, b_j
\]
and
\[
\delta^\dagger_{j, k}( b_i ) = %
\overline{\alpha_{j, k}} ( b_i b_j^* b_k - b_j^* b_k b_i ) = %
\overline{\alpha_{j, k}} \{ b_i, b_j^* \} b_k = %
\tfn{j = i} \overline{\alpha_{i, k}} \, b_k;
\]
thus
\[
\delta( b_i ) = %
\sum_{j, k \in I} \delta_{j, k}( b_i ) \otimes \ket{f_{j, k}} = %
-\sum_{j \in \supp( i )} \alpha_{j, i} \, b_j \otimes \ket{f_{j, i}}
\]
and
\[
\delta^\dagger( b_i ) = %
\sum_{j, k \in I} \delta^\dagger_{j, k}( b_i ) \otimes \bra{f_{j, k}}
= \sum_{k \in \supp( i )} %
\overline{\alpha_{i, k}} \, b_k \otimes \bra{f_{i, k}}.
\]
Hence
\begin{align*}
\phi( b_i ) & = \lambda_i b_i \otimes \dyad{\vac}{\vac} - %
\sum_{j \in \supp( i )} %
\alpha_{j, i} b_j \otimes \dyad{f_{j, i}}{\vac} + %
\sum_{k \in \supp( i )} %
\overline{\alpha_{i, k}} b_k \otimes \dyad{\vac}{f_{i, k}} \\[1ex]
 & = \sum_{j \in \supp^+( i )} b_j \otimes %
\bigl( \tfn{j = i} \lambda_i \dyad{\vac}{\vac} + %
\dyad{\vac}{\alpha_{i, j} f_{i, j}} - %
\dyad{\alpha_{j, i} f_{j, i}}{\vac} \bigr)
\end{align*}
and the identity~\eqref{eqn:phid} follows.
\end{proof}

\begin{theorem}
Let $\alg$ be the CAR algebra and let $\phi$ be defined as in
Lemma~\ref{lem:sqepphi}. If the amplitudes $\{ \alpha_{i , j} \}$
and energies $\{ \eta_i \}$ are chosen so that $\alg_\phi = \alg_0$
then there exists an adapted family of unital $*$-homomorphisms
$\bigl( j_t : \alg \to \bop{\ini \ctimes \fock}{} \bigr)_{t \ge 0}$
which forms a Feller cocycle in the sense of Theorem~\ref{thm:cocycle}
and satisfies the quantum stochastic differential
equation~\eqref{eqn:EHqsde} in the strong sense on~$\alg_0$ for
all~$t \ge 0$. Setting
\[
T_t( a ) := %
\bigl( \id_\ini \otimes \bra{\evec{0}} \bigr) j_t( a ) %
\bigl( \id_\ini \otimes \ket{\evec{0}} \bigr) %
\qquad \text{for all } a \in \alg \text{ and } t \ge 0
\]
gives a quantum Markov semigroup~$T$ on $\alg$ whose generator is the
closure of
\[
\tau : \alg_0 \to \alg_0; \ %
x \mapsto \I \sum_{i \in I} \eta_i [ b_i^* b_i , x ] - %
\hlf \sum_{i, j \in I} | \alpha_{i, j} |^2 \bigl( %
b_i^* b_j [ b_j^* b_i, x ] + [ x, b_i^* b_j ] b_j^* b_i \bigr).
\]
\end{theorem}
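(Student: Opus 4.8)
The plan is to recognise this statement as an instance of Theorem~\ref{thm:extension1} and then to read off the cocycle and semigroup conclusions from Corollary~\ref{cor:stsoln} and Theorem~\ref{thm:cocycle}. First I would collect the three hypotheses of Theorem~\ref{thm:extension1}: Lemma~\ref{lem:sqepphi} supplies that the map $\phi$ of~\eqref{eqn:sqepphi} is a flow generator; Lemma~\ref{lem:fdsa} (via Remark~\ref{rmk:af}) shows that $\alg$ is an AF algebra whose local $*$-algebra $\alg_0$ contains its square roots; and the equality $\alg_\phi = \alg_0$ is precisely the standing assumption on the amplitudes $\{ \alpha_{i, j} \}$ and energies $\{ \eta_i \}$. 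Hence, for each $t \ge 0$, Theorem~\ref{thm:extension1} produces a unital $*$-homomorphism $\jt_t : \alg \to \bop{\ini \ctimes \fock}{}$ agreeing with $j_t$ of Theorem~\ref{thm:intrep} on $\ini \otimes \evecs$, and adaptedness is inherited from Theorem~\ref{thm:intrep}.

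Next I would apply Corollary~\ref{cor:stsoln} to conclude that $( \jt_t )_{t \ge 0}$ is a strong solution of the Evans--Hudson equation~\eqref{eqn:EHqsde} on $\alg_0$, and Theorem~\ref{thm:cocycle} to conclude that it is a Feller cocycle and that the vacuum compression $T_t( a ) = ( \id_\ini \otimes \bra{\evec{0}} ) \jt_t( a ) ( \id_\ini \otimes \ket{\evec{0}} )$ defines a strongly continuous semigroup of completely positive contractions on $\alg$ for which $\alg_0$ is a core. Conservativity, \ie $T_t( \id ) = \id$, is then immediate: $\phi$ being a flow generator forces $\phi( \id ) = 0$, and Theorem~\ref{thm:cocycle} gives $T_t( \id ) = \id$ for all $t \ge 0$; so $T$ is a quantum Markov semigroup, with no separate conservation argument required.

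Finally I would identify the generator explicitly. Theorem~\ref{thm:cocycle} gives $T_t( x ) = \exp( t \phi_\vac^\vac )( x )$ for all $x \in \alg_0$, and by definition $\phi_\vac^\vac( x ) = ( 1_\ini \otimes \bra{\vac} ) \phi( x ) ( 1_\ini \otimes \ket{\vac} )$ is the top-left entry of the matrix~\eqref{eqn:sqepphi}, namely $\tau( x )$. Substituting the expressions of Lemma~\ref{lem:tau} for $\tau$, for $[ h, x ] = \sum_{i \in I} \eta_i [ b_i^* b_i, x ]$ and for $\tau_{i, j}( x ) = | \alpha_{i, j} |^2 \bigl( b_i^* b_j [ b_j^* b_i, x ] + [ x, b_i^* b_j ] b_j^* b_i \bigr)$ reproduces the displayed formula for the generator; since $\alg_0$ is a core, the generator of $T$ is the closure $\overline{\tau}$.

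Because the analytic work has already been carried out in Sections~\ref{sec:hopf} and~\ref{sec:qf}, I do not expect a substantive obstacle. The one point deserving care is that the hypothesis $\alg_\phi = \alg_0$ is a genuine restriction rather than something automatic for the CAR algebra: it is the growth bound on the iterates $\phi_n( b_{i_0} )$ afforded by the explicit formula~\eqref{eqn:phid} that must be verified in each concrete choice of amplitudes and energies.
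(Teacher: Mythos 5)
Your proposal is correct and follows essentially the same route as the paper: the paper's proof simply cites Theorem~\ref{thm:extension1}, Theorem~\ref{thm:cocycle} and Lemma~\ref{lem:sqepphi}, and you have spelled out the details that the paper treats as immediate, including the appeal to Lemma~\ref{lem:fdsa} and Remark~\ref{rmk:af} for the square-root hypothesis and to Corollary~\ref{cor:stsoln} for the strong solution claim.
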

\begin{proof}
This is an immediate consequence of Theorem~\ref{thm:extension1},
Theorem~\ref{thm:cocycle} and Lemma~\ref{lem:sqepphi}.
\end{proof}

\begin{example}\label{ex:sym+bounds}
Suppose that the amplitudes satisfy the symmetry
condition~\eqref{eqn:symmetric}, and further that there are
uniform bounds on the amplitudes, valencies and energies:
\begin{equation}\label{eqn:allbdd}
M := \sup_{ i, j \in I } | \alpha_{ i, j } | < \infty,
\qquad
V := \sup_{i \in I} | \supp( i ) | < \infty
\qquad \text{and} \qquad
H := \sup_{i \in I} | \eta_i | < \infty.
\end{equation}
It follows that
\[
| \lambda_i | \le | \eta_i | + \hlf V M^2 %
\qquad \text{and} \qquad %
\| B_{i, j} \| \le %
| \lambda_i | + 2 M \le H + \hlf V M^2 + 2 M
\]
for all $i$,~$j \in I$. Hence, for all $n \in \Z_+$,
\[
\| \phi_n( b_i ) \| \le %
( V + 1 )^n \bigl( H + \hlf V M^2 + 2 M \bigr)^n
\]
and so $\alg_\phi = \alg_0$, by Corollary~\ref{cor:bound}. Hence
there is a flow on $\alg$ for this generator.
\end{example}

\begin{example}
We can lift the boundedness assumptions in Example~\ref{ex:sym+bounds}
by taking $I$ to be a disjoint union of subsets,
\[
I = \bigsqcup_{k \in K} I_k,
\]
such that there is no transport between any of these subsets, \ie
\[
\alpha_{ i, j } \neq 0 \text{ only if there is some } %
k \in K \text{ such that } i, j \in I_k.
\]
Assume the symmetry condition~\eqref{eqn:symmetric} once
again. Suppose that in each $I_k$ the conditions of~\eqref{eqn:allbdd}
are satisfied, but with respect to constants $M_k$, $V_k$ and
$H_k$ that depend on $k$. Then, if $i \in I_k$, we get the estimate
\[
\| \phi_n( b_i ) \| \le %
( V_k + 1 )^n \bigl( H_k + \hlf V_k M_k^2 + 2 M_k \bigr)^n
\]
and so $\alg_\phi = \alg_0$ once more, but now it is possible that
$M = \infty$ \etc.
\end{example}

\begin{example}
To create an example where the graph associated to $I$ has only one
component, but where we do not assume $M < \infty$ as in
Example~\ref{ex:sym+bounds}, assume once again that
$I$ is decomposed into a disjoint union:
\[
I = \bigsqcup_{k \in \Z_+} I_k \qquad %
\text{with } | I_k | < \infty \text{ for all } k \in \Z_+.
\]
This time assume, as well as the symmetry
condition~\eqref{eqn:symmetric}, that $\alpha_{i,j} = 0$ unless there
is some $k \in \Z_+$ such that $i \in I_k$ and $j \in I_{k + 1}$, or
$j \in I_k$ and $i \in I_{k + 1}$, so that there is transport only
between neighbouring levels in $I$. Set
\[
a_k = \sup \{ |\alpha_{ i, j }|: i \in I_k, \ j \in I_{k + 1} \} %
\qquad \text{for all } k \in \Z_+,
\]
and furthermore assume that the energies are bounded,
\ie $H < \infty$.

Now if $k \in \N$ and $i \in I_k$ then
\begin{align*}
\sum_{j \in \supp^+( i )} \| B_{i, j} \| %
 & \le \| B_{i, i} \| + \sum_{j \in I_{k - 1}} \| B_{i, j} \| + %
\sum_{j \in I_{k + 1}} \| B_{i, j} \| \\ %
& \le | \lambda_i | + 2 | I_{k - 1} | a_{k - 1} + %
2 | I_{k + 1} | a_k,
\end{align*}
with a similar estimate holding if $i \in I_0$. Furthermore,
\[
| \lambda_i | \le H + %
\hlf | I_{k - 1} | a_{k - 1}^2 + %
\hlf | I_{k + 1} | a_k^2.
\]
As in Example~\ref{ex:sym+bounds}, if it can be shown that
\[
\sum_{j \in \supp^+( i )} \| B_{ i, j } \| \le C
\]
for some constant $C$ that does not depend on $i$, it follows that
$\| \phi_n( b_i ) \| \le C^n$ for each $n \in \Z_+$ and $i \in I$, and
so $\alg_\phi = \alg_0$ once more. Here, the previous working shows
this will hold if there are constants $a > 0$, $b > 0$ and $p \ge 1$
such that
\[
a_k \le \frac{a}{( k + 2 )^p} \quad \text{and} \quad %
| I_k | \le b ( k + 1 )^p \qquad \text{for all } k \in \Z_+.
\]
It is clear that this can yield an example where $M = \infty$, \ie
there is no upper bound on the valencies.
\end{example}

\section[Flows on universal C* algebras]%
{Flows on universal \boldmath{$C^*$}~algebras}\label{sec:uni}

\subsection{The non-commutative torus}

\begin{definition}\label{dfn:nct}
Let $\lambda \in \T$, the set of complex numbers with unit
modulus. The \emph{non-commutative torus} is the universal
$C^*$~algebra $\alg$ generated by unitaries $U$ and $V$ which satisfy
the relation
\[
U V = \lambda V U.
\]
Let $\alg_0$ denote the dense $*$-subalgebra of $\alg$ generated by
$U$ and $V$.

There is a faithful trace $\tr$ on~$\alg$ such that
$\tau( U^m V^n ) = \tfn{m = n = 0}$ for all $m$, $n \in \Z$; the proof
of this in \cite[pp.166--168]{Dav96} is valid for all $\lambda$.
Consequently $\{U^m V^n : m, n \in \Z\}$ is a basis for~$\alg_0$.
\end{definition}

\begin{lemma}\label{lem:nct-real}
Let $\ini := \ell^2( \Z^2 )$, let
\[
( U_c u )_{m, n} = u_{m + 1, n} \quad \text{and} \quad %
( V_c u )_{m, n} = \lambda^m u_{m, n + 1} \qquad %
\text{for all } u \in \ini \text{ and } m, n \in \Z,
\]
and let $\alg_c \subseteq \bop{\ini}{}$ be the $C^*$~algebra generated
by $U_c$ and $V_c$. There is a $C^*$~isomorphism from~$\alg$ to
$\alg_c$ such that $U \mapsto U_c$ and $V \mapsto V_c$. Moreover,
under this map the trace $\tr$ corresponds to the vector state given
by $e \in \ini$ such that $e_{m, n} = \tfn{m = n = 0}$ for all $m$,
$n \in \Z$.
\end{lemma}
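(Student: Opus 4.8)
The plan is to verify that $U_c$ and $V_c$ are unitaries on $\ini = \ell^2( \Z^2 )$ satisfying the torus relation, to invoke the universal property of $\alg$ to obtain a surjective $*$-homomorphism onto $\alg_c$, and then to use the faithfulness of $\tr$ to promote this to an isomorphism, identifying $\tr$ with the vector state $\omega_e : T \mapsto \langle e, T e \rangle$ on $\alg_c$ in the process.

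First I would dispose of the routine checks. The operator $U_c$ is the bilateral shift in the first variable, hence unitary; for $V_c$ one computes the adjoint to be $( V_c^* w )_{m, n} = \overline{\lambda^m} w_{m, n - 1}$, and since $| \lambda | = 1$ it follows that $V_c^* V_c = V_c V_c^* = \id_\ini$, so $V_c$ is unitary too. Applying both operators to a general $u \in \ini$ gives $( U_c V_c u )_{m, n} = \lambda^{m + 1} u_{m + 1, n + 1}$ and $( V_c U_c u )_{m, n} = \lambda^m u_{m + 1, n + 1}$, so that $U_c V_c = \lambda V_c U_c$. By Definition~\ref{dfn:nct}, the universal property of $\alg$ then yields a unital $*$-homomorphism $\pi : \alg \to \bop{\ini}{}$ with $\pi( U ) = U_c$ and $\pi( V ) = V_c$; its range, being the image of a $*$-homomorphism between $C^*$~algebras, is a $C^*$-subalgebra of $\bop{\ini}{}$, and since it contains $U_c$ and $V_c$ it equals $\alg_c$, so $\pi$ is a surjection onto $\alg_c$.

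The substantive step is injectivity, which the faithful trace supplies. A short computation with the two shifts shows that $U_c^m V_c^n e$ is the standard basis vector of $\ell^2( \Z^2 )$ supported at $( -m, -n )$, since no $\lambda$-phase accumulates while the first coordinate stays $0$ throughout the action of $V_c$; therefore $\omega_e( U_c^m V_c^n ) = \tfn{m = n = 0} = \tr( U^m V^n )$ for all $m$, $n \in \Z$. Since $\pi( U^m V^n ) = U_c^m V_c^n$ and the monomials $\{ U^m V^n : m, n \in \Z \}$ span the dense $*$-subalgebra $\alg_0$, the bounded functionals $\omega_e \comp \pi$ and $\tr$ agree on $\alg_0$, hence on all of $\alg$. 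Consequently, if $a \in \alg$ satisfies $\pi( a ) = 0$ then $\tr( a^* a ) = \omega_e\bigl( \pi( a )^* \pi( a ) \bigr) = 0$, and faithfulness of $\tr$ forces $a = 0$. Thus $\pi : \alg \to \alg_c$ is a $C^*$-isomorphism, and the identity $\omega_e \comp \pi = \tr$ already established is exactly the final assertion of the lemma.

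I expect the only mildly delicate point to be bookkeeping: tracking the sign conventions in the two shifts when evaluating $U_c^m V_c^n e$ and confirming that the factors $\lambda^m$ contribute nothing on the orbit of $e$. Everything else is standard, with the faithfulness of $\tr$ cited from \cite{Dav96} carrying the real weight of the injectivity argument.
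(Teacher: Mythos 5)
Your proposal is correct and follows essentially the same route as the paper's (very terse) proof: verify unitarity and the commutation relation, use universality of $\alg$ to obtain a surjective $*$-homomorphism onto $\alg_c$, and then derive injectivity from the fact that the faithful trace $\tr$ is carried to the vector state $\omega_e$. You have simply filled in the details the paper leaves implicit, most notably the computation $U_c^m V_c^n e = \delta_{(-m,-n)}$ (with the observation that no $\lambda$-phase accumulates because the first coordinate of the orbit of $e$ under $V_c$ stays zero), the density argument extending $\omega_e \comp \pi = \tr$ from $\alg_0$ to $\alg$, and the standard step that $\pi(a) = 0 \Rightarrow \tr(a^*a) = 0 \Rightarrow a = 0$.
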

\begin{proof}
Unitarity of $U_c$ and $V_c$ is immediately verified, as is the
identity $U_c V_c = \lambda V_c U_c$, so the universality of~$\alg$
gives a surjective $*$-homomorphism from $\alg$ to $\alg_c$.
Injectivity is a consequence of the final observation, that $\tr$
corresponds to the vector state given by~$e$.
\end{proof}

From now on we will identify $\alg$ and $\alg_c$.

\begin{definition}\label{dfn:rotates}
For each $(\mu, \nu) \in \T^2$, let $\pi_{\mu, \nu}$ be the
automorphism of $\alg$ such that
\[
\pi_{\mu, \nu}( U^m V^n ) = \mu^m \nu^n U^m V^n \qquad %
\text{for all } m, n \in \Z;
\]
the existence of $\pi_{\mu, \nu}$ is an immediate consequence of
universality.
\end{definition}

The proofs of the next two lemmas are a matter of routine algebraic
computation.

\begin{lemma}\label{lem:newderivs}
For all $a$, $b \in \Z$, define maps ${}_a\delta : \alg_0 \to \alg_0$
and $\delta_b : \alg_0 \to \alg_0$ by linear extension of the
identities
\[
{}_a\delta( U^m V^n ) = m U^{a + m} V^n %
\quad \text{and} \quad %
\delta_b( U^m V^n ) = n \lambda^{-b m} U^m V^{b + n} %
\qquad \text{for all } m, n \in \Z.
\]
Then ${}_a\delta$ is a $\pi_{1, \lambda^a}$-derivation and $\delta_b$
is a $\pi_{\lambda^{-b}, 1}$-derivation; moreover, their adjoints are
such that
\[
{}_a\delta^\dagger( U^m V^n ) = -m \lambda^{a n} U^{-a + m} V^n %
\quad \text{and} \quad %
\delta_b^\dagger( U^m V^n ) = -n U^m V^{-b + n}
\]
for all $m$, $n \in \Z$.
\end{lemma}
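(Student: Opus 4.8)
The plan is to reduce everything to a calculation on the basis $\{ U^m V^n : m, n \in \Z \}$ of~$\alg_0$, available by Definition~\ref{dfn:nct}: since ${}_a\delta$, $\delta_b$, each $\pi_{\mu, \nu}$ and their adjoints are linear, while the multiplication of~$\alg_0$ is bilinear, it suffices to verify all of the stated identities on these basis elements. Two facts will drive the computation: the \emph{commutation rule}
\[
( U^m V^n )( U^p V^q ) = \lambda^{-n p} U^{m + p} V^{n + q}
\qquad ( m, n, p, q \in \Z ),
\]
obtained by iterating the defining relation written as $V U = \lambda^{-1} U V$, and the \emph{adjoint rule} $( U^m V^n )^* = \lambda^{-m n} U^{-m} V^{-n}$, which follows from the commutation rule and the unitarity of $U$ and~$V$.

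For the derivation property of~${}_a\delta$ I would substitute $x = U^m V^n$, $y = U^p V^q$ into ${}_a\delta( x y ) = {}_a\delta( x ) y + \pi_{1, \lambda^a}( x ) {}_a\delta( y )$. Using the commutation rule, the left-hand side becomes $( m + p ) \lambda^{-n p} U^{a + m + p} V^{n + q}$; on the right, the first term gives $m \lambda^{-n p} U^{a + m + p} V^{n + q}$, while in the second term the factor $\lambda^{a n}$ from $\pi_{1, \lambda^a}( U^m V^n )$ is cancelled exactly by the factor $\lambda^{-n a}$ produced when $V^n$ is carried past~$U^a$, leaving $p \lambda^{-n p} U^{a + m + p} V^{n + q}$; the two sides then agree. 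The check that $\delta_b$ is a $\pi_{\lambda^{-b}, 1}$-derivation runs identically, the $\lambda$-bookkeeping this time involving the factors $\lambda^{-b m}$, $\lambda^{-b p}$ and $\lambda^{-n p}$.

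For the adjoints I would apply the definition $\alpha^\dagger( a ) = \alpha( a^* )^*$ directly: thus ${}_a\delta^\dagger( U^m V^n ) = \bigl( {}_a\delta( \lambda^{-m n} U^{-m} V^{-n} ) \bigr)^* = \bigl( -m \lambda^{-m n} U^{a - m} V^{-n} \bigr)^*$, and a second use of the adjoint rule, together with $\overline{\lambda} = \lambda^{-1}$, collapses the accumulated powers of~$\lambda$ to $\lambda^{a n}$, yielding $-m \lambda^{a n} U^{-a + m} V^n$ as claimed. The same computation applied to $\delta_b$ makes the powers of~$\lambda$ cancel completely, giving $-n U^m V^{-b + n}$.

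No real obstacle is anticipated; the only point requiring care is the tracking of the exponents of~$\lambda$, since a sign slip in the commutation rule would propagate through every identity. It is also worth making explicit that the linear extensions defining ${}_a\delta$ and $\delta_b$ are unambiguous precisely because $\{ U^m V^n : m, n \in \Z \}$ is a basis, as recorded in Definition~\ref{dfn:nct}.
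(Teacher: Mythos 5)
Your computation is correct and is exactly the ``routine algebraic computation'' the paper invokes without writing out; the paper gives no proof beyond that remark, so there is nothing to contrast. One small point worth recording explicitly is the adjoint rule you use, namely $(U^m V^n)^* = \lambda^{-mn} U^{-m} V^{-n}$, since it is the source of all the $\lambda$-exponents and is itself a one-line consequence of $V^n U^p = \lambda^{-np} U^p V^n$; with that in hand your bookkeeping for ${}_a\delta$, $\delta_b$ and both adjoints checks out.
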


\begin{remark}
The sufficient condition in Lemma~\ref{lem:newderivs} is also
necessary. It is easy to show that if~${}_a\delta$ is a
$\pi_{\mu, \nu}$-derivation then $\mu = 1$ and $\nu = \lambda^a$;
similarly, if $\delta_b$ is a $\pi_{\mu, \nu}$-derivation then
$\mu = \lambda^{-b}$ and $\nu = 1$.
\end{remark}

\begin{lemma}\label{lem:nct}
With $\alg_0$ as in Definition~\ref{dfn:nct}, and ${}_a\delta$ and
$\delta_b$ as in Lemma~\ref{lem:newderivs}, fix $c_1$,~$c_2 \in \C$
and let
\[
\phi : \alg_0 \to \alg_0 \otimes \bop{\C^3}{}; \ %
x \mapsto \begin{bmatrix}
\tau( x ) & \overline{c_1} \, {}_a\delta^\dagger( x )
 & \overline{c_2} \, \delta^\dagger_b( x ) \\[1ex]
 c_1 \, {}_a\delta( x ) & \pi_{1, \lambda^a}( x ) - x & 0 \\[1ex]
 c_2 \, \delta_b( x ) & 0 & \pi_{\lambda^{-b}, 1}( x ) - x
\end{bmatrix},
\]
where the map
\[
\tau : %
\alg_0 \to \alg_0; \ %
U^m V^n \mapsto %
 -\hlf \bigl( | c_1 |^2 m^2 + | c_2 |^2 n^2 \bigr) U^m V^n.
\]
Then $\tau$ is $*$-linear and $\phi$ is a flow generator.
\end{lemma}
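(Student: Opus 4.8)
The plan is to verify directly that $\phi$ has the form~\eqref{eqn:gencpts} required by Lemma~\ref{lem:gen}, thereby reducing the proof of ``$\phi$ is a flow generator'' to checking the three structural conditions on $\pi$, $\delta$ and $\tau$ in that lemma. Reading off the matrix, the diagonal ``$\pi$ part'' should be
\[
\wt{\pi} := \pi_{1, \lambda^a} \oplus \pi_{\lambda^{-b}, 1} :
\alg_0 \to \alg_0 \otimes \bop{\C^2}{},
\]
which is a unital $*$-homomorphism because each $\pi_{\mu,\nu}$ is an automorphism of $\alg$ restricting to an automorphism of $\alg_0$ (Definition~\ref{dfn:rotates}), and a direct sum of $*$-homomorphisms is again one. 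The ``$\delta$ part'' is $\wt\delta := (c_1\,{}_a\delta) \oplus (c_2\,\delta_b)$ mapping into $\alg_0 \otimes \ket{\C^2}$, and the first task is to confirm this is a $\wt\pi$-derivation: since scalar multiples and direct sums of $\pi_i$-derivations form a $(\pi_1 \oplus \pi_2)$-derivation whenever each summand is a $\pi_i$-derivation into the corresponding block, this follows at once from Lemma~\ref{lem:newderivs}, which already records that ${}_a\delta$ is a $\pi_{1,\lambda^a}$-derivation and $\delta_b$ is a $\pi_{\lambda^{-b},1}$-derivation. The adjoint row $[\,\overline{c_1}\,{}_a\delta^\dagger \ \ \overline{c_2}\,\delta^\dagger_b\,]$ is then automatically $\wt\delta^\dagger$, using the formulas for ${}_a\delta^\dagger$ and $\delta_b^\dagger$ from the same lemma together with the identity $(c\,\alpha)^\dagger = \overline{c}\,\alpha^\dagger$.

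The substantive computation is the $*$-linearity of $\tau$ and the relation~\eqref{eqn:tau}, namely
\[
\tau(xy) - \tau(x) y - x\tau(y) = \wt\delta^\dagger(x)\wt\delta(y)
= |c_1|^2\,{}_a\delta^\dagger(x)\,{}_a\delta(y) + |c_2|^2\,\delta_b^\dagger(x)\,\delta_b(y)
\qquad\text{for all } x, y \in \alg_0.
\]
Both sides are bilinear, so it suffices to test on the basis $\{U^m V^n : m, n \in \Z\}$ of $\alg_0$ furnished by Definition~\ref{dfn:nct}. For $*$-linearity, note $(U^m V^n)^* = \lambda^{mn} U^{-m} V^{-n}$ (from $UV = \lambda VU$ and unitarity of $U$, $V$), and since the eigenvalue $-\tfrac12(|c_1|^2 m^2 + |c_2|^2 n^2)$ is real and invariant under $(m,n) \mapsto (-m,-n)$, one gets $\tau(x^*) = \tau(x)^*$ immediately. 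For~\eqref{eqn:tau} I would compute each side on $x = U^m V^n$, $y = U^p V^q$. Using $U^m V^n \cdot U^p V^q = \lambda^{-np} U^{m+p} V^{n+q}$, the left-hand side becomes $-\tfrac12\lambda^{-np}\bigl[\,|c_1|^2((m+p)^2 - m^2 - p^2) + |c_2|^2((n+q)^2 - n^2 - q^2)\,\bigr] U^{m+p} V^{n+q}$, i.e.\ a multiple of $-(|c_1|^2 mp + |c_2|^2 nq)$; on the right, ${}_a\delta^\dagger(U^m V^n)\,{}_a\delta(U^p V^q)$ contributes (up to the $\lambda$-bookkeeping, which must cancel correctly) a term proportional to $mp$, and $\delta_b^\dagger\delta_b$ a term proportional to $nq$, with the $\lambda$-powers $\lambda^{an}$, $\lambda^{-ap}$ from the two $a$-derivation factors and $\lambda^{-bm}$ matching $\lambda^{-np}$ after the product $U^{m+p}V^{n+q}$ is formed. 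The one point that genuinely needs care is this phase-factor accounting: one must check that $\lambda^{an}\cdot\lambda^{-ap} = \lambda^{-np}$ does \emph{not} hold in general, so the cancellation is not term-by-term but happens only after summing the $a$-block and using that $\phi$ is built from \emph{both} ${}_a\delta$ and its adjoint — in fact the cleanest route is to observe that $\tau = \tau_1 + \tau_2$ where $\tau_1(U^mV^n) = -\tfrac12|c_1|^2 m^2 U^mV^n$ satisfies $\tau_1(xy) - \tau_1(x)y - x\tau_1(y) = |c_1|^2\,{}_a\delta^\dagger(x)\,{}_a\delta(y)$ on its own (and similarly for $\tau_2$ with $\delta_b$), then add.

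The main obstacle, then, is purely bookkeeping: verifying that the single scalar ``dissipation'' $|c_1|^2$ correctly reproduces the cross-term $-|c_1|^2 mp$ with all $\lambda$-phases cancelling in the product ${}_a\delta^\dagger(U^mV^n)\,{}_a\delta(U^pV^q)$, and likewise for the $b$-block (where no $\lambda$'s appear in $\delta_b^\dagger$, so that block is simpler). Once the two blocks are handled separately the sum is immediate, $\phi(\id) = 0$ holds since $m = n = 0$ on $\id = U^0 V^0$ makes every matrix entry vanish, and Lemma~\ref{lem:gen} then delivers that $\phi$ is a flow generator. This is exactly the ``routine algebraic computation'' flagged before the lemma, so in the paper I would simply record the reduction to Lemma~\ref{lem:gen} and the block decomposition $\tau = \tau_1 + \tau_2$, leaving the scalar identities to the reader.
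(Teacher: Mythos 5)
Your reduction to Lemma~\ref{lem:gen} and the split $\tau = \tau_1 + \tau_2$ are exactly the right way to organise the ``routine algebraic computation'' the paper alludes to, and the overall proposal is correct. However, the phase inventory in your bookkeeping paragraph is garbled, and since you flag it as ``the one point that genuinely needs care'' it is worth setting straight: ${}_a\delta(U^pV^q) = p\,U^{a+p}V^q$ carries \emph{no} $\lambda$-factor, so no $\lambda^{-ap}$ ever appears, and $\delta_b^\dagger(U^mV^n) = -n\,U^mV^{n-b}$ is likewise $\lambda$-free, so the $\lambda^{-bm}$ you list never occurs either. The phases that actually arise in the $a$-block are $\lambda^{an}$ from ${}_a\delta^\dagger(U^mV^n)$ together with the commutation factor $\lambda^{-n(a+p)}$ from pushing $V^n$ past $U^{a+p}$; these combine to $\lambda^{-np}$, matching the left-hand side. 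In the $b$-block the phases are $\lambda^{-bp}$ from $\delta_b(U^pV^q)$ and $\lambda^{-(n-b)p}$ from commutation, again combining to $\lambda^{-np}$. So, contrary to what you say, the cancellation \emph{is} block-by-block --- indeed that is precisely what your own decomposition $\tau = \tau_1 + \tau_2$ asserts, and it works for every $a, b \in \Z$ (not just $a = b = 0$; the restriction on $a, b$ only enters later, in Lemma~\ref{lem:inorout}, to control the growth of $\phi_n$). A cosmetic slip: $(U^mV^n)^* = \lambda^{-mn}U^{-m}V^{-n}$, not $\lambda^{mn}U^{-m}V^{-n}$, though this does not affect the $*$-linearity check since the eigenvalue of $\tau$ is real and even in $(m,n)$.
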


\begin{lemma}\label{lem:inorout}
Let $\phi$ be as in Lemma~\ref{lem:nct}. If $a = b = 0$ then
$\alg_\phi = \alg_0$; conversely, if $a \neq 0$ and $c_1 \neq 0$ then
$U \notin \alg_\phi$, and if $b \neq 0$ and $c_2 \neq 0$ then
$V \notin \alg_\phi$.
\end{lemma}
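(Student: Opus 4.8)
The plan is to handle the two assertions separately, in each case working with the basis $\{ U^m V^n : m, n \in \Z \}$ of $\alg_0$ and the slice identity~\eqref{eqn:ncpts}. Let $\{ \vac, e_1, e_2 \}$ be the standard orthonormal basis of $\mmul = \C^3$, ordered so that the matrix defining $\phi$ in Lemma~\ref{lem:nct} has rows and columns indexed by $\vac, e_1, e_2$; reading off its entries gives the slices $\phi^{e_1}_\vac = c_1 \, {}_a\delta$, $\phi^\vac_{e_1} = \overline{c_1} \, {}_a\delta^\dagger$, $\phi^{e_2}_\vac = c_2 \, \delta_b$ and $\phi^\vac_{e_2} = \overline{c_2} \, \delta_b^\dagger$.

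For the first assertion, when $a = b = 0$ the automorphisms $\pi_{1, \lambda^a}$ and $\pi_{\lambda^{-b}, 1}$ are both $\iota_{\alg_0}$, and each of $\tau$, ${}_0\delta$, ${}_0\delta^\dagger$, $\delta_0$ and $\delta_0^\dagger$ multiplies $U^m V^n$ by a scalar. Thus $\phi( U^m V^n ) = U^m V^n \otimes R_{m, n}$ for some $R_{m, n} \in \bop{\C^3}{}$, and an easy induction on $k$, using $\phi_{k + 1} = ( \phi_k \otimes \iota_{\bopp} ) \comp \phi$, gives $\phi_k( U^m V^n ) = U^m V^n \otimes R_{m, n}^{\otimes k}$. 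As $U^m V^n$ is unitary, $\| \phi_k( U^m V^n ) \| = \| R_{m, n} \|^k$ for all $k \in \Z_+$, so $U^m V^n \in \alg_\phi$ for every $m$, $n \in \Z$; since $\{ U, V \}$ is a $*$-generating set for $\alg_0$ contained in $\alg_\phi$, Corollary~\ref{cor:bound} gives $\alg_\phi = \alg_0$.

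For the converse, suppose $a \neq 0$ and $c_1 \neq 0$. By~\eqref{eqn:ncpts}, the compression $( \id_\ini \otimes \bra{e_1}^{\otimes n} ) \phi_n( U ) ( \id_\ini \otimes \ket{\vac}^{\otimes n} )$ equals $( \phi^{e_1}_\vac )^n( U ) = c_1^n ( {}_a\delta )^n( U )$, and the compression with the roles of $e_1$ and $\vac$ interchanged equals $\overline{c_1}^{\,n} ( {}_a\delta^\dagger )^n( U )$; since such compressions do not increase the operator norm,
\[
\| \phi_n( U ) \| \ge | c_1 |^n \max \bigl\{ \| ( {}_a\delta )^n( U ) \|, \ \| ( {}_a\delta^\dagger )^n( U ) \| \bigr\} .
\]
Iterating the formulae of Lemma~\ref{lem:newderivs} from $U = U^1 V^0$ gives
$( {}_a\delta )^n( U ) = \bigl( \prod_{j = 0}^{n - 1} ( ja + 1 ) \bigr) U^{na + 1}$ and
$( {}_a\delta^\dagger )^n( U ) = ( -1 )^n \bigl( \prod_{j = 0}^{n - 1} ( 1 - ja ) \bigr) U^{1 - na}$,
so, $U^{na + 1}$ and $U^{1 - na}$ being unitary, the two norms above are $\prod_{j = 0}^{n - 1} | ja + 1 |$ and $\prod_{j = 0}^{n - 1} | 1 - ja |$. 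The elementary point is that, for each integer $a \neq 0$, at least one of these products is at least $( n - 1 )!$: if $a \neq -1$ the first does, since it equals $n!$ when $a = 1$ and, for $| a | \ge 2$, every factor with $j \ge 1$ satisfies $| ja + 1 | \ge | a | j - 1 \ge j$; and if $a = -1$ the second does, equalling $n!$. Hence $\| \phi_n( U ) \| \ge | c_1 |^n ( n - 1 )!$ for all $n$, a growth rate no sequence $C_U M_U^n$ can match, so $U \notin \alg_\phi$. The case of $V$, when $b \neq 0$ and $c_2 \neq 0$, is identical: compress instead against $e_2$ and $\vac$ to reach the slices $c_2 \delta_b$ and $\overline{c_2} \delta_b^\dagger$, and use $( \delta_b )^n( V ) = \bigl( \prod_{j = 0}^{n - 1} ( jb + 1 ) \bigr) V^{nb + 1}$ and $( \delta_b^\dagger )^n( V ) = ( -1 )^n \bigl( \prod_{j = 0}^{n - 1} ( 1 - jb ) \bigr) V^{1 - nb}$.

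The only subtle point --- and so the place I would be most careful --- is this last dichotomy: ${}_a\delta$ annihilates $U$ after two steps exactly when $a = -1$ (and ${}_a\delta^\dagger$ does so exactly when $a = 1$), so no single slice works uniformly in $a$ and one must switch between the creation- and annihilation-type slices according to the sign of $a$. Everything else is routine computation with the basis $\{ U^m V^n \}$ and the scalar estimates above.
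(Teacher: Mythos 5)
Your proof is correct and follows essentially the same approach as the paper: for the positive direction you exploit that, when $a = b = 0$, each component of $\phi$ acts diagonally on the basis $\{ U^m V^n \}$, giving $\phi_n$ an explicit tensor-power form (the paper only records this for $U$ and $V$, but as you note that is all Corollary~\ref{cor:bound} needs); for the converse you slice $\phi_n$ against $\ket{\vac}^{\otimes n}$ and $\bra{e_1}^{\otimes n}$ (and vice versa) via~\eqref{eqn:ncpts} and iterate the formulae of Lemma~\ref{lem:newderivs}. The only genuine difference is cosmetic: the paper splits by the sign of $a$, using $({}_a\delta)^n$ for $a > 0$ and $({}_a\delta^\dagger)^n$ for $a < 0$, obtaining the bound $\| \phi_n(U) \| \ge |c_1|^n\, n!$ directly in both cases; you instead split on $a = -1$ versus $a \ne -1$ and settle for $(n-1)!$ when $|a|\ge 2$, which is still superexponential and so suffices. (Your final paragraph's remark about switching slices ``according to the sign of $a$'' describes the paper's split rather than the one you actually used, but both are valid.)
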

\begin{proof}
When $a = b = 0$, note that $\phi( U ) = U \otimes m_U$ and
$\phi( V ) = V \otimes m_V$, where
\[
m_U := \begin{bmatrix} -\hlf | c_1 |^2 & -\overline{c_1} & 0 \\[1ex]
c_1 & 0 & 0 \\[1ex] 0 & 0 & 0 \end{bmatrix} %
\qquad \text{and} \qquad %
m_V := \begin{bmatrix} -\hlf | c_2 |^2 & 0 & -\overline{c_2} \\[1ex]
0 & 0 & 0 \\[1ex] c_2 & 0 & 0 \end{bmatrix}.
\]
Hence $\phi_n( U ) = U \otimes m_U^{\otimes n}$ and
$\phi_n( V ) = V \otimes m_V^{\otimes n}$, so $U$, $V \in \alg_\phi$,
as claimed, and $\alg_\phi = \alg_0$, by Corollary~\ref{cor:bound}.

If $a > 0$ then, by induction, one gets that
\[
{}_a\delta^n( U ) = %
\prod_{i = 0}^{n - 1} \bigl( i a + 1 \bigr) U^{a n + 1} %
\qquad \text{for all } n \in \N.
\]
Let $e = [1 \ 0 \ 0]^T$ and $f = [0 \ 1 \ 0]^T$ be unit vectors
in~$\C^3$, and note that
\[
\bigl( \id_\ini \otimes \bra{f} \otimes \cdots %
\otimes \bra{f} \bigr) \phi_n( x ) %
\bigl( \id_\ini \otimes \ket{e} \otimes \cdots %
\otimes \ket{e} \bigr) = c_1^n \, {}_a\delta^n( x ) %
\qquad \text{for all } x \in \alg_0,
\]
so
\[
\| \phi_n( U ) \| \ge %
| c_1 |^n \prod_{i = 0}^{n - 1} \bigl( i a + 1 \bigr) \ge %
| c_1 |^n n!.
\]
If $a < 0$ then, by considering ${}_a\delta^\dagger$ instead, we see
that
\[
\| \phi_n( U ) \| \ge \| \bigl( %
\id_\ini \otimes \bra{e} \otimes \cdots \otimes \bra{e} \bigr) %
\phi_n( U ) \bigl( %
\id_\ini \otimes \ket{f} \otimes \cdots \otimes \ket{f} \bigr) \| %
\ge | c_1 |^n n!.
\]
A similar proof shows that $V \notin \alg_\phi$ when $b \neq 0$.
\end{proof}

\begin{remark}
The lower bounds obtained in Lemma~\ref{lem:inorout} when $a \neq 0$
or $b \neq 0$ show that our techniques do not apply in these
cases. The same problem arises if one attempts to use the results of
\cite{FaS93} instead.
\end{remark}

The following theorem gives the existence of a quantum flow used by
Goswami, Sahu and Sinha \cite[Theorem~2.1(i)]{GSS05}.

\begin{theorem}\label{thm:nctcgs}
Let $\alg$ be as in Definition~\ref{dfn:nct} and $\phi$ as in
Lemma~\ref{lem:nct} for $a = b = 0$. There exists an adapted family
$j$ of unital $*$-homomorphisms from $\alg$ to
$\bop{\ini \ctimes \fock}{}$ such that
\[
\langle u \evec{f}, j_t( x ) v \evec{g} \rangle = %
\langle u \evec{f}, ( x v ) \evec{g} \rangle + %
\int_0^t \langle u \evec{f}, j_s\bigl( %
\phi^{\wh{f}( s )}_{\wh{g}( s )}( x ) \bigr) v \evec{g} %
\rangle \std s
\]
for all $u$,~$v \in \ini$, $f$,~$g \in \elltwo$, $x \in \alg_0$ and
$t \ge 0$.
\end{theorem}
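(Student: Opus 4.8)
The plan is to recognise this statement as a direct application of Theorem~\ref{thm:extension2}: the non-commutative torus is presented as a universal $C^*$~algebra on two unitary generators, and unitaries are in particular isometries, so no fresh analysis is required. The work is entirely in checking the hypotheses of Theorem~\ref{thm:extension2} and then reading off the conclusion from it together with Theorem~\ref{thm:intrep}.

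First I would verify the hypotheses. By Lemma~\ref{lem:nct}, the map $\phi$ associated with $a = b = 0$ is a flow generator on $\alg_0$ (with multiplicity space $\mul = \C^2$, so that $\mmul = \C^3$). By Lemma~\ref{lem:inorout}, because $a = b = 0$ we have $\alg_\phi = \alg_0$ --- the point being that $\phi$ sends each of $U$, $V$ to an elementary tensor, so iteration gives a geometric growth bound on the $*$-generating set $\{ U, V \}$ and Corollary~\ref{cor:bound} applies. Finally, $\alg$ is the universal $C^*$~algebra with generators $\{ U, V \}$ subject to the relations $U^*U - 1$, $UU^* - 1$, $V^*V - 1$, $VV^* - 1$ and $UV - \lambda VU$; since $\{ U^*U - 1,\ V^*V - 1 \}$ occurs among these relations, $\alg$ is generated by isometries in the sense required (indeed by unitaries), and $\alg_0$ is precisely the $*$-algebra generated by $U$ and $V$.

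With these checks in place, Theorem~\ref{thm:extension2} provides, for each $t \ge 0$, a unital $*$-homomorphism $\jt_t : \alg \to \bop{\ini \ctimes \fock}{}$ with $\jt_t( x ) = j_t( x )$ on $\ini \otimes \evecs$ for every $x \in \alg_0$, where $j_t$ is the family from Theorem~\ref{thm:intrep}. Adaptedness of $( j_t( x ) )_{t \ge 0}$ and the weak quantum stochastic differential equation~\eqref{eqn:wqsde} are supplied by Theorem~\ref{thm:intrep}; restricting~\eqref{eqn:wqsde} to $x \in \alg_0$ and using $\jt_t( x ) = j_t( x )$ on $\ini \otimes \evecs$ gives exactly the displayed identity, so setting $j := \jt$ finishes the argument. (Theorem~\ref{thm:cocycle} may then be invoked to upgrade $j$ to a Feller cocycle, recovering the quantum flow of~\cite[Theorem~2.1(i)]{GSS05}.) There is no genuine obstacle here: the theorem repackages Lemma~\ref{lem:nct}, Lemma~\ref{lem:inorout}, Theorem~\ref{thm:extension2} and Theorem~\ref{thm:intrep}. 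The one point that rewards a second's thought is confirming that the standard presentation of the non-commutative torus fits the ``generated by isometries'' framework --- which it does, a fortiori, since it is generated by unitaries.
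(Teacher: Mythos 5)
Your proposal is correct and follows exactly the paper's own argument: the paper proves Theorem~\ref{thm:nctcgs} by citing Theorem~\ref{thm:extension2}, Lemma~\ref{lem:nct} and Lemma~\ref{lem:inorout}, with the displayed identity supplied by~\eqref{eqn:wqsde} from Theorem~\ref{thm:intrep}. Your additional remarks --- that the universal presentation by unitaries a fortiori satisfies the ``generated by isometries'' hypothesis, and that $a = b = 0$ makes $\phi$ act on $U$, $V$ by elementary tensors so Corollary~\ref{cor:bound} applies --- are accurate and match the reasoning implicit in Lemma~\ref{lem:inorout} and the surrounding text.
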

\begin{proof}
This follows from Theorem~\ref{thm:extension2}, Lemma~\ref{lem:nct}
and Lemma~\ref{lem:inorout}.
\end{proof}

\begin{remark}
The cocycle constructed in Theorem~\ref{thm:nctcgs} is essentially a
classical object: as noted in \cite[Theorem~2.1]{CGS03}, when
$c_1 = c_2 = \I$ one may take
\[
j_t( x ) :=
\beta\bigl( \exp( 2 \pi \I B^1_t ),
 \exp( 2 \pi \I B^2_t ) \bigr)( x )
\qquad \text{for all } x \in \alg \text{ and } t \ge 0,
\]
where $\beta : \T^2 \to \Aut( \alg )$ is the natural action of the
$2$-torus $\T^2$ on $\alg$, so that
\[
\beta( z, w )( U^m V^n ) = z^m w^n U^m V^n
\qquad \text{for all } ( z, w ) \in \T^2,
\]
and the Fock space $\fock$ is identified in the usual manner with the
$L^2$~space of the two-dimensional classical Brownian motion
$( B^1, B^2 )$.
\end{remark}

The existence of flows where the generator has non-zero gauge
part may also be established.

\begin{lemma}\label{lem:gauge}
Fix $( \mu, \nu ) \in \T^2$ with $\mu \neq 1$. Let $\alg_0$ be as in
Definition~\ref{dfn:nct} and $\pi_{\mu,\nu}$ as in
Definition~\ref{dfn:rotates}. There exists a flow generator
\[
\phi : \alg_0 \to \alg_0 \otimes \bop{\C^2}{}; \ %
x \mapsto \begin{bmatrix}
\tau( x ) & -\mu \delta( x ) \\[1ex]
\delta( x ) & \pi_{\mu, \nu}( x ) - x
\end{bmatrix},
\]
where the $\pi_{\mu, \nu}$-derivation
\begin{equation}\label{eqn:pidact}
\delta : \alg_0 \to \alg_0; \ U^m V^n \mapsto %
\frac{1 - \mu^m \nu^n}{1 - \mu} U^m V^n
\end{equation}
is such that $\delta^\dagger = -\mu \delta$, and the map
\[
\tau := \frac{\mu}{1 - \mu} \delta : \alg_0 \to \alg_0; \ %
U^m V^n \mapsto %
\frac{\mu ( 1 - \mu^m \nu^n )}{( 1 - \mu )^2} U^m V^n.
\]
Furthermore, $U$, $V \in \alg_\phi$ and so $\alg_\phi = \alg_0$.
\end{lemma}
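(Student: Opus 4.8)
The plan is to recognise the map $\delta$ of~\eqref{eqn:pidact} as a scalar multiple of $\iota_{\alg_0} - \pi_{\mu, \nu}$, which renders the $\pi_{\mu, \nu}$-derivation property and the structure relations purely formal, and then to read off $\alg_\phi = \alg_0$ from the fact that $\phi$ acts diagonally on $U$ and~$V$. Since $\pi_{\mu, \nu}( U^m V^n ) = \mu^m \nu^n U^m V^n$, formula~\eqref{eqn:pidact} says exactly that $\delta = ( 1 - \mu )^{-1}( \iota_{\alg_0} - \pi_{\mu, \nu} )$. The automorphism $\pi_{\mu, \nu}$ of Definition~\ref{dfn:rotates} is a $*$-automorphism arising from universality, and it restricts to a unital $*$-homomorphism of $\alg_0$; for any unital $*$-homomorphism $\pi : \alg_0 \to \alg_0$ the map $\iota_{\alg_0} - \pi$ is a $\pi$-derivation, because $( \iota_{\alg_0} - \pi )( x y ) = x y - \pi( x ) \pi( y ) = ( \iota_{\alg_0} - \pi )( x ) y + \pi( x )( \iota_{\alg_0} - \pi )( y )$. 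Applied with $\pi = \pi_{\mu, \nu}$, this shows $\delta$ is a $\pi_{\mu, \nu}$-derivation.

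Next I would compute the adjoints. As $\iota_{\alg_0}^\dagger = \iota_{\alg_0}$ and $\pi_{\mu, \nu}^\dagger = \pi_{\mu, \nu}$, being a $*$-automorphism, $\delta^\dagger = \overline{( 1 - \mu )^{-1}}\,( \iota_{\alg_0} - \pi_{\mu, \nu} ) = ( 1 - \overline{\mu} )^{-1}( \iota_{\alg_0} - \pi_{\mu, \nu} )$; since $| \mu | = 1$ one has $( 1 - \overline{\mu} )^{-1} = -\mu ( 1 - \mu )^{-1}$, so $\delta^\dagger = -\mu \delta$. The same arithmetic gives $\tau^\dagger = \overline{\mu ( 1 - \mu )^{-1}}\, \delta^\dagger = \tau$, so $\tau$ is $*$-linear.

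For the structure relation~\eqref{eqn:tau}, I would use the $\pi_{\mu, \nu}$-derivation identity to write $\delta( x y ) - \delta( x ) y - x \delta( y ) = ( \pi_{\mu, \nu}( x ) - x )\delta( y )$, note that $\pi_{\mu, \nu}( x ) - x = -( 1 - \mu )\delta( x )$, and conclude, since $\tau = \mu ( 1 - \mu )^{-1}\delta$, that
\[
\tau( x y ) - \tau( x ) y - x \tau( y ) = \frac{\mu}{1 - \mu}\bigl( \pi_{\mu, \nu}( x ) - x \bigr)\delta( y ) = -\mu\,\delta( x )\delta( y ) = \delta^\dagger( x )\delta( y )
\]
for all $x$,~$y \in \alg_0$, which is~\eqref{eqn:tau}. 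Taking $\mul = \C$, so that $\alg_0 \otimes \ket{\mul}$, $\alg_0 \otimes \bra{\mul}$ and $\alg_0 \otimes \bop{\mul}{}$ are each identified with $\alg_0$ and $\bopp = \bop{\C^2}{}$, the matrix displayed in the lemma is precisely the form~\eqref{eqn:gencpts} with $\pi = \pi_{\mu, \nu}$, the $\pi_{\mu, \nu}$-derivation $\delta$, and the $*$-linear map $\tau$ just shown to satisfy~\eqref{eqn:tau}; Lemma~\ref{lem:gen} then identifies $\phi$ as a flow generator.

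Finally, evaluating~\eqref{eqn:pidact} at the generators gives $\delta( U ) = U$ and $\delta( V ) = ( 1 - \nu )( 1 - \mu )^{-1} V$, so that $\phi( U ) = U \otimes m_U$ and $\phi( V ) = V \otimes m_V$ for the explicit constant matrices
\[
m_U := \begin{bmatrix} \mu ( 1 - \mu )^{-1} & -\mu \\[1ex] 1 & \mu - 1 \end{bmatrix} \in \bop{\C^2}{}
\qquad\text{and}\qquad
m_V := \frac{1 - \nu}{1 - \mu}\, m_U .
\]
Since $\phi_{n + 1} = ( \phi_n \otimes \iota_{\bopp} ) \comp \phi$, an immediate induction gives $\phi_n( U ) = U \otimes m_U^{\otimes n}$ and $\phi_n( V ) = V \otimes m_V^{\otimes n}$, so $\| \phi_n( U ) \| = \| m_U \|^n$ and $\| \phi_n( V ) \| = \| m_V \|^n$ for all $n \in \Z_+$; hence $U$,~$V \in \alg_\phi$. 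As $\{ U, V \}$ is $*$-generating for $\alg_0$ and $\alg_\phi$ is a unital $*$-subalgebra of $\alg_0$, Corollary~\ref{cor:bound} gives $\alg_\phi = \alg_0$. I do not anticipate a genuine obstacle: after the identification $\delta = ( 1 - \mu )^{-1}( \iota_{\alg_0} - \pi_{\mu, \nu} )$ the whole argument is formal, and the only step needing care is the unimodular-scalar bookkeeping --- in particular the identity $( 1 - \overline{\mu} )^{-1} = -\mu ( 1 - \mu )^{-1}$ --- used to verify $\delta^\dagger = -\mu \delta$ and $\tau^\dagger = \tau$.
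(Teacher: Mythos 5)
Your proof is correct and follows essentially the same route as the paper's, which likewise checks that $\delta$ is a $\pi_{\mu,\nu}$-derivation with $\delta^\dagger=-\mu\delta$ and then reads off $U,V\in\alg_\phi$ from the explicit matrices $m_U$ and $m_V$ together with Corollary~\ref{cor:bound}. Your observation that $\delta=(1-\mu)^{-1}(\iota_{\alg_0}-\pi_{\mu,\nu})$, which makes the derivation and adjoint identities and the structure relation for $\tau$ formal consequences of $\pi^\dagger_{\mu,\nu}=\pi_{\mu,\nu}$ and $|\mu|=1$, is a clean way of organising the algebra that the paper's proof leaves as ``readily verified''.
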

\begin{proof}
Using the basis $\{ U^m V^n: m, n \in \Z \}$, one can readily verify
that $\delta$ is a $\pi_{\mu, \nu}$-derivation such that
$\delta^\dagger = -\mu \delta$, and hence $\phi$ is a flow
generator. Since
\[
\phi( U ) = U \otimes %
\begin{bmatrix}
 \ds\frac{\mu}{1 - \mu} & -\mu \\[2ex]
 1 & \mu - 1
\end{bmatrix} \qquad \text{and} \qquad %
\phi( V ) = V \otimes \frac{1 - \nu}{1 - \mu}%
\begin{bmatrix}
 \ds\frac{\mu}{1 - \mu} & -\mu \\[2ex]
 1 & \mu - 1
\end{bmatrix},
\]
the fact that $\{ U, V \} \subseteq \alg_\phi$ follows as in the proof
of Lemma~\ref{lem:inorout}.
\end{proof}

\begin{remark}
It is curious that for $\phi$ as in Lemma~\ref{lem:gauge} we have
$\tau = \mu ( 1 - \mu )^{-1} \delta$, and so $\tau$ is first rather
than second order. Whether or not $\phi$ or, equivalently, $\delta$ is
bounded is an open question; our existence result obviates the need to
determine this.
\end{remark}

\begin{theorem}
Let $\alg$ be as in Definition~\ref{dfn:nct} and $\phi$ as in
Lemma~\ref{lem:gauge}. There exists an adapted family $j$ of unital
$*$-homomorphisms from $\alg$ to $\bop{\ini \ctimes \fock}{}$ such
that
\[
\langle u \evec{f}, j_t( x ) v \evec{g} \rangle = %
\langle u \evec{f}, ( x v ) \evec{g} \rangle + %
\int_0^t \langle u \evec{f}, j_s\bigl( %
\phi^{\wh{f}( s )}_{\wh{g}( s )}( x ) \bigr) v \evec{g} %
\rangle \std s
\]
for all $u$,~$v \in \ini$, $f$,~$g \in \elltwo$, $x \in \alg_0$ and
$t \ge 0$.
\end{theorem}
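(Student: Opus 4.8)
The plan is to recognise this as a direct instance of Theorem~\ref{thm:extension2}, exactly as Theorem~\ref{thm:nctcgs} was. The non-commutative torus $\alg$ of Definition~\ref{dfn:nct} is by construction the universal $C^*$~algebra generated by the two unitaries $U$ and $V$ subject to $U V = \lambda V U$; since a unitary is in particular an isometry, the defining relations include $U^* U - \id$ and $V^* V - \id$, so $\alg$ is generated by isometries in the technical sense required by Theorem~\ref{thm:extension2}, with $\alg_0$ the $*$-algebra that $U$ and $V$ generate. The first step is therefore merely to record that the hypotheses on $\alg$ and $\alg_0$ are met.

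Next I would invoke Lemma~\ref{lem:gauge}: the map $\phi$ appearing in the statement is a flow generator, and $\alg_\phi = \alg_0$. With this in hand, Theorem~\ref{thm:extension2} produces, for each $t \ge 0$, a unital $*$-homomorphism $\jt_t : \alg \to \bop{\ini \ctimes \fock}{}$ that agrees on $\ini \otimes \evecs$ with the quantum Wiener series $j_t$ of Theorem~\ref{thm:intrep} for every $x \in \alg_0$. Writing $j_t := \jt_t$, adaptedness of $\bigl( j_t( x ) \bigr)_{t \ge 0}$ is inherited from Theorem~\ref{thm:intrep} (equivalently Theorem~\ref{thm:cocycle}), and the integral identity in the statement is precisely the weak QSDE~\eqref{eqn:wqsde} of Theorem~\ref{thm:intrep} specialised to $x \in \alg_0$, where $\jt_t( x )$ and $j_t( x )$ coincide on $\ini \otimes \evecs$. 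So in outline the whole theorem follows from Lemma~\ref{lem:gauge} together with Theorems~\ref{thm:intrep}, \ref{thm:extension2} and~\ref{thm:cocycle}.

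There is essentially no obstacle, since all of the analytic content is already packaged in the cited results; the one point worth stressing is why the universal-algebra route of Theorem~\ref{thm:extension2} is needed here rather than the square-root trick of Theorem~\ref{thm:extension1}. The generator $\phi$ of Lemma~\ref{lem:gauge} has non-trivial gauge component $\pi_{\mu, \nu}( x ) - x$ (as $\mu \neq 1$), and $\alg_0$ is not closed under taking square roots, so Theorem~\ref{thm:extension1} does not apply; on the other hand the growth bound $\alg_\phi = \alg_0$ is immediate from the explicit formulae $\phi( U ) = U \otimes m$ and $\phi( V ) = \tfrac{1 - \nu}{1 - \mu}\, V \otimes m$ for a single fixed matrix $m \in \bop{\C^2}{}$, which force $\phi_n( U )$ and $\phi_n( V )$ to grow at most geometrically, whence Corollary~\ref{cor:bound} applies to the $*$-generating set $\{ U, V \}$. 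Verifying these last identities is the only routine computation underlying the statement, and it has already been carried out in the proof of Lemma~\ref{lem:gauge}.
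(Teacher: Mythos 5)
Your proposal is correct and follows precisely the route the paper intends: the paper leaves this theorem unproved because, exactly as you say, it is the same appeal to Theorem~\ref{thm:extension2} that underlies Theorem~\ref{thm:nctcgs}, but now using Lemma~\ref{lem:gauge} in place of Lemmas~\ref{lem:nct} and~\ref{lem:inorout}, with the weak QSDE~\eqref{eqn:wqsde} and adaptedness supplied by Theorem~\ref{thm:intrep}. Your side remark explaining why the universal-algebra route is forced here (the $*$-algebra $\alg_0$ generated by $U$ and $V$ is not closed under square roots, so Theorem~\ref{thm:extension1} is unavailable) is an accurate and useful addition that the paper leaves implicit.
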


As noted by Hudson and Robinson~\cite{HuR88}, the following
result makes clear why in Theorem~\ref{thm:nctcgs} it is necessary to
use two dimensions of noise to obtain a process whose flow generator
includes both of the derivations
$c_1 \, {}_0\delta$ and $c_2 \, \delta_0$: the linear combination
$\delta = c_1 \, {}_0\delta + c_2 \, \delta_0$ can appear on the
right-hand side of \eqref{eqn:tau} only when the coefficients $c_1$
and $c_2$ satisfy a particular algebraic relation.

\begin{proposition}\label{prp:nogo}
Let ${}_0\delta$ and $\delta_0$ be as in Lemma~\ref{lem:newderivs},
and let $\delta = c_1 \, {}_0 \delta + c_2 \, \delta_0$ for complex
numbers $c_1$ and $c_2$. A necessary and sufficient condition for the
existence of a linear map $\tau : \alg_0 \to \alg$ such that
\[
\tau( x y ) - \tau( x ) y - x \tau( y ) = %
\delta^\dagger( x ) \delta( y ) \qquad \text{for all } x, y \in \alg_0
\]
is the equality $c_1 \overline{c_2} = \overline{c_1} c_2$.
\end{proposition}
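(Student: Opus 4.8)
The plan is to prove the two implications separately, both by explicit computation in the basis $\{ U^m V^n : m, n \in \Z \}$ of $\alg_0$. Throughout I use Lemma~\ref{lem:newderivs} with $a = b = 0$; since $\pi_{1, \lambda^0} = \pi_{\lambda^{-0}, 1} = \iota_{\alg_0}$, the map $\delta = c_1 \, {}_0\delta + c_2 \, \delta_0$ is an ordinary derivation acting by $\delta( U^m V^n ) = ( c_1 m + c_2 n ) U^m V^n$, with adjoint $\delta^\dagger( U^m V^n ) = -( \overline{c_1} m + \overline{c_2} n ) U^m V^n$; in particular $\delta( U ) = c_1 U$, $\delta( V ) = c_2 V$, $\delta^\dagger( U ) = -\overline{c_1} U$ and $\delta^\dagger( V ) = -\overline{c_2} V$, so that $\delta^\dagger( U ) \delta( V ) = -\overline{c_1} c_2 \, U V$ and $\delta^\dagger( V ) \delta( U ) = -\overline{c_2} c_1 \, V U$.

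For sufficiency, assume $c_1 \overline{c_2} = \overline{c_1} c_2$. I would define $\tau$ by linear extension of
\[
\tau( U^m V^n ) :=
-\tfrac12 \bigl( | c_1 |^2 m^2 + 2 \overline{c_1} c_2 \, m n + | c_2 |^2 n^2 \bigr) U^m V^n ;
\]
writing $t_{m, n}$ for the scalar coefficient and using $U^a V^b U^c V^d = \lambda^{-bc} U^{a+c} V^{b+d}$, the required identity for $x = U^a V^b$, $y = U^c V^d$ reduces to the scalar relation $t_{a+c, b+d} - t_{a, b} - t_{c, d} = -( \overline{c_1} a + \overline{c_2} b )( c_1 c + c_2 d )$. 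Expanding both sides, they agree term by term except for $\overline{c_1} c_2 \, b c$ against $\overline{c_2} c_1 \, b c$, and these coincide exactly under the hypothesis; bilinearity then gives the identity for all $x, y \in \alg_0$.

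For necessity, suppose a linear $\tau : \alg_0 \to \alg$ with the stated property exists. The difficulty is that $\tau$ is an arbitrary map into the completion, hence neither diagonal nor $\alg_0$-valued a priori, so no direct comparison of coefficients is available. I would sidestep this using the faithful trace $\tr$ of Definition~\ref{dfn:nct} and the single functional $\varphi := \tr\bigl( ( U V )^* \, \cdot \, \bigr)$ on $\alg$. Applying the defining relation once with $( x, y ) = ( U, V )$ and once with $( x, y ) = ( V, U )$, and evaluating $\varphi$ on both, the traciality of $\tr$ collapses $\varphi( \tau( U ) V )$, $\varphi( U \tau( V ) )$, $\varphi( \tau( V ) U )$, $\varphi( V \tau( U ) )$ down to $\tr( U^{-1} \tau( U ) )$ and $\tr( V^{-1} \tau( V ) )$, while $U V = \lambda V U$ gives $\varphi( \tau( V U ) ) = \lambda^{-1} \varphi( \tau( U V ) )$ and $\varphi( V U ) = \lambda^{-1}$, $\varphi( U V ) = 1$. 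After clearing the common factor $\lambda^{-1}$ from the second equation, both equations have the identical left-hand side $\varphi( \tau( U V ) ) - \tr( U^{-1} \tau( U ) ) - \tr( V^{-1} \tau( V ) )$, while their right-hand sides are $-\overline{c_1} c_2$ and $-\overline{c_2} c_1$; hence $\overline{c_1} c_2 = \overline{c_2} c_1$, i.e.\ $c_1 \overline{c_2} = \overline{c_1} c_2$.

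The main obstacle is exactly this last step: pinning down a scalar constraint from an unstructured $\tau$. The trick is to pair the defining relation against the right functional so that every genuinely $\tau$-dependent quantity produced by the instances $( U, V )$ and $( V, U )$ is forced to agree, leaving only $\delta^\dagger( U ) \delta( V )$ and $\delta^\dagger( V ) \delta( U )$ to be compared. (Equivalently, one may first average $\tau$ over the canonical $\T^2$-action $\beta$ from the remark after Theorem~\ref{thm:nctcgs}, which is legitimate because $\delta$ and $\delta^\dagger$ commute with every $\beta( z, w )$ and the Fourier projections separate points of $\alg$; the averaged $\tau$ is diagonal, and the constraint drops out of the symmetry of the resulting scalar cocycle identity under $( a, b ) \leftrightarrow ( c, d )$. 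The trace computation above is a streamlined version of this.)
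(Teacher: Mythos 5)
Your proof is correct, and since the paper's own proof is only the one-line citation ``adapting slightly the proof of \cite[Theorem~2.2]{Rob90}'', what you have written is a genuinely self-contained argument for both directions.

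For sufficiency, I checked the reduction to the scalar identity: with $t_{m,n} := -\tfrac12\bigl(|c_1|^2 m^2 + 2\overline{c_1}c_2\,mn + |c_2|^2 n^2\bigr)$, the bilinear form $t_{a+c,b+d} - t_{a,b} - t_{c,d}$ equals $-\bigl(|c_1|^2 ac + \overline{c_1}c_2(ad+bc) + |c_2|^2 bd\bigr)$, while the right-hand side is $-\bigl(|c_1|^2 ac + \overline{c_1}c_2\,ad + \overline{c_2}c_1\,bc + |c_2|^2 bd\bigr)$; the only discrepancy is the $bc$ coefficient, which matches iff $\overline{c_1}c_2 = \overline{c_2}c_1$, exactly as you say. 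Since both sides of the defining identity are bilinear in $(x,y)$, verification on the basis $\{U^m V^n\}$ suffices, so this direction is complete.

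For necessity, the trace trick is a nice way of extracting a scalar constraint from an a priori unstructured $\tau : \alg_0 \to \alg$. One small imprecision in your phrasing: $\varphi(\tau(V)U)$ and $\varphi(V\tau(U))$ do not collapse directly to $\tr(V^{-1}\tau(V))$ and $\tr(U^{-1}\tau(U))$ but rather to $\lambda^{-1}\tr(V^{-1}\tau(V))$ and $\lambda^{-1}\tr(U^{-1}\tau(U))$ (using $UV^*U^* = \lambda^{-1}V^*$ and $V^*U^*V = \lambda^{-1}U^*$). This is harmless because the entire second equation carries the same $\lambda^{-1}$ factor --- including $\varphi(\tau(VU)) = \lambda^{-1}\varphi(\tau(UV))$ and $\varphi(VU) = \lambda^{-1}$ --- and you do clear it; but the sentence as written slightly misstates where the factor appears. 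The parenthetical averaging remark is a reasonable alternative sketch, though it is not needed and is less concrete than the trace computation.

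One further observation worth recording: your proof exhibits why the obstruction vanishes in the universal rotation algebra (Lemma~\ref{lem:ura}), where the central unitary $Z$ provides the additional diagonal direction $p \mapsto (\overline{c_1}c_2 - c_1\overline{c_2})p$ needed to absorb the asymmetric cross-term. Your sufficiency computation makes the mechanism transparent in a way that a bare reference to Robinson's paper does not.
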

\begin{proof}
This may be established by adapting slightly the proof of
\cite[Theorem~2.2]{Rob90}.
\end{proof}

\subsection{The universal rotation algebra}

To avoid the issue of Proposition~\ref{prp:nogo}, Hudson and
Robinson work with the universal rotation algebra.

\begin{definition}\label{dfn:ura}
Let $\alg$ be the \emph{universal rotation algebra} \cite{AnP89}: this
is the universal $C^*$~algebra with unitary generators $U$, $V$ and
$Z$ satisfying the relations
\[
U V = Z V U, \qquad U Z = Z U \quad \text{and} \quad V Z = Z V.
\]
It may be viewed as the group $C^*$~algebra corresponding to the
discrete Heisenberg group
\[
\Gamma := %
\langle u, v, z \mid u v = z v u, \ u z = z u, \ v z = z v \rangle;
\]
from this perspective, its universal nature is immediately apparent.

Letting $\alg_0$ denote the $*$-subalgebra generated by $U$, $V$
and~$Z$, there are skew-adjoint derivations
\[
\delta_1 : \alg_0 \to \alg_0; \ U^m V^n Z^p \mapsto m U^m V^n Z^p %
\quad \text{and} \quad %
\delta_2 : \alg_0 \to \alg_0; \ U^m V^n Z^p \mapsto n U^m V^n Z^p
\]
for all $m$, $n$, $p \in \Z$.
\end{definition}

\begin{remark}
For a concrete version of the universal rotation algebra, let
$\ini := \ell^2( \Z^3 )$ and define operators $U_c$, $V_c$ and $Z_c$
by setting
\[
( U_c u )_{m, n, p} = u_{m + 1, n, p}, \quad %
( V_c u )_{m, n, p} = u_{m, n + 1, m + p} \quad \text{and} \quad %
( Z_c u )_{m, n, p} = u_{m, n, p + 1}
\]
for all $u \in \ini$ and $m$, $n$, $p \in \Z$. It is readily verified
that $U_c$, $V_c$ and $Z_c$ are unitary and satisfy the commutation
relations as claimed; let $\alg_c$ be the $C^*$~algebra generated by
these operators.

Universality gives a surjective $*$-homomorphism from $\alg$ to
$\alg_c$ such that $U \mapsto U_c$, $V \mapsto V_c$ and
$Z \mapsto Z_c$, and injectivity may be established in the same manner
as for the non-commutative torus: there is a faithful state $\tau$ on
$\alg$ such that $\tau( U^m V^n Z^p ) = \tfn{m = n = p = 0}$ and this
corresponds to the vector state given by $e \in \ini$ such that
$e_{m, n, p} = \tfn{m = n = p = 0}$.
\end{remark}

\begin{lemma}\label{lem:ura}
With $\alg_0$, $\delta_1$ and $\delta_2$ as in
Definition~\ref{dfn:ura}, fix $c_1$,~$c_2 \in \C$, let
$\delta = c_1 \delta_1 + c_2 \delta_2$ and define the
\emph{Bellissard map}
\[
\tau : \alg_0 \to \alg_0; \ U^m V^n Z^p \mapsto %
- \bigl( \hlf | c_1 |^2 m^2 + \hlf | c_2 |^2 n^2 + %
\overline{c_1} c_2 m n + %
( \overline{c_1} c_2 - c_1 \overline{c_2} ) p \bigr) U^m V^n Z^p,
\]
Then $\tau$ is $*$-linear and such that
\[
\tau( x y ) - \tau( x ) y - x \tau( y ) = %
\delta^\dagger( x ) \delta( y ) %
\qquad \text{for all } x, y \in \alg_0,
\]
so the map
\[
\phi : \alg_0 \to \alg_0 \otimes \bop{\C^2}{}; \ %
x \mapsto \begin{bmatrix} \tau( x ) & \delta^\dagger( x ) \\[1ex]
\delta( x ) & 0 \end{bmatrix}
\]
is a flow generator.

Furthermore, $U$, $V$, $Z \in \alg_\phi$ and $\alg_\phi = \alg_0$.
\end{lemma}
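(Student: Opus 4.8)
The plan is to push everything down to the monomial basis $\{ U^m V^n Z^p : m, n, p \in \Z \}$ of $\alg_0$, whose existence follows from the faithful state on $\alg$ exhibited after Definition~\ref{dfn:ura}, just as for the non-commutative torus in Definition~\ref{dfn:nct}. The one algebraic input needed is the normal-ordering rule: $Z$ is central and $U V = Z V U$ gives $U^j V^n = Z^{j n} V^n U^j$ by induction on $j$, whence
\[
( U^m V^n Z^p )( U^{m'} V^{n'} Z^{p'} ) = U^{m + m'} V^{n + n'} Z^{p + p' - n m'}
\qquad \text{and} \qquad
( U^m V^n Z^p )^* = U^{-m} V^{-n} Z^{-p - m n}
\]
for all $m, n, p, m', n', p' \in \Z$. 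Recalling from Definition~\ref{dfn:ura} that $\delta_1$ and $\delta_2$ are skew-adjoint derivations of $\alg_0$, it follows that $\delta = c_1 \delta_1 + c_2 \delta_2$ is a derivation, hence a $\pi$-derivation for $\pi := \iota_{\alg_0}$, and that $\delta^\dagger = -( \overline{c_1} \delta_1 + \overline{c_2} \delta_2 )$.

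Next I would verify the two stated properties of the Bellissard map by a scalar computation on monomials. Writing $\tau( U^m V^n Z^p ) = -Q( m, n, p )\, U^m V^n Z^p$ with $Q( m, n, p ) := \hlf | c_1 |^2 m^2 + \hlf | c_2 |^2 n^2 + \overline{c_1} c_2\, m n + ( \overline{c_1} c_2 - c_1 \overline{c_2} )\, p$, the $*$-linearity of $\tau$ reduces, via the adjoint formula above, to the scalar identity $Q( -m, -n, -p - m n ) = \overline{Q( m, n, p )}$, which holds because the shift $-m n$ in the last slot produces the term $-( \overline{c_1} c_2 - c_1 \overline{c_2} )\, m n$, converting $\overline{c_1} c_2\, m n$ into $c_1 \overline{c_2}\, m n$. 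Since $\tau$, $\delta$ and $\delta^\dagger$ each multiply a monomial by a scalar and the product of two basis monomials is again a basis monomial, the structure relation~\eqref{eqn:tau} reduces on monomials to
\[
Q( m + m', n + n', p + p' - n m' ) - Q( m, n, p ) - Q( m', n', p' ) =
( \overline{c_1} m + \overline{c_2} n )( c_1 m' + c_2 n' );
\]
expanding the left-hand side, the contribution $-( \overline{c_1} c_2 - c_1 \overline{c_2} )\, n m'$ of the $Z$-shift cancels the surplus cross term from $\overline{c_1} c_2 ( m + m' )( n + n' )$, leaving $| c_1 |^2 m m' + | c_2 |^2 n n' + \overline{c_1} c_2\, m n' + c_1 \overline{c_2}\, m' n$, which is exactly the right-hand side. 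This is the heart of the lemma, and conceptually the point at which the central generator $Z$ earns its keep: the group cocycle $- n m'$ in the product law is absorbed precisely by the term $( \overline{c_1} c_2 - c_1 \overline{c_2} )\, p$ of $\tau$ — the very obstruction that, on the non-commutative torus, forces the constraint $c_1 \overline{c_2} = \overline{c_1} c_2$ (Proposition~\ref{prp:nogo}).

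Granted $\tau$ $*$-linear and satisfying~\eqref{eqn:tau}, $\delta$ a $\pi$-derivation and $\pi = \iota_{\alg_0}$ a unital $*$-homomorphism, Lemma~\ref{lem:gen} immediately gives that $\phi$ — whose $(2,2)$-entry $\pi( x ) - x$ vanishes — is a flow generator. For the growth bound, I would evaluate $\phi$ on the three generators: $\phi( U ) = U \otimes m_U$, $\phi( V ) = V \otimes m_V$ and $\phi( Z ) = Z \otimes m_Z$, where
\[
m_U = \begin{bmatrix} -\hlf | c_1 |^2 & -\overline{c_1} \\[1ex] c_1 & 0 \end{bmatrix}, \qquad
m_V = \begin{bmatrix} -\hlf | c_2 |^2 & -\overline{c_2} \\[1ex] c_2 & 0 \end{bmatrix}, \qquad
m_Z = \begin{bmatrix} c_1 \overline{c_2} - \overline{c_1} c_2 & 0 \\[1ex] 0 & 0 \end{bmatrix}
\]
lie in $\bop{\C^2}{}$. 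The recursion $\phi_{n + 1} = ( \phi_n \otimes \iota_{\bopp} ) \comp \phi$ then yields $\phi_n( W ) = W \otimes m_W^{\otimes n}$ for $W \in \{ U, V, Z \}$, so $\| \phi_n( W ) \| = \| m_W \|^n$ for all $n \in \Z_+$; hence $U, V, Z \in \alg_\phi$, and since $\{ U, V, Z \}$ is a $*$-generating set for $\alg_0$, Corollary~\ref{cor:bound} gives $\alg_\phi = \alg_0$. The only step that demands care is the monomial bookkeeping in the second paragraph — specifically, keeping track of the $Z$-exponent shift $-n m'$; everything else is formal.
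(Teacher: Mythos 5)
Your proposal is correct and follows essentially the same route as the paper: the paper dismisses the algebraic statements as "readily verified" and concentrates on exhibiting $\phi(U)=U\otimes m_U$, $\phi(V)=V\otimes m_V$, $\phi(Z)=Z\otimes m_Z$ and invoking Corollary~\ref{cor:bound}, which is exactly how you finish. Your contribution is simply to supply the omitted verification — the normal-ordering rule, the scalar form of the $*$-linearity check $Q(-m,-n,-p-mn)=\overline{Q(m,n,p)}$, and the cocycle cancellation in the carr\'e du champ identity — all of which are accurate.
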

\begin{proof}
The algebraic statements are readily verified, and a short calculation
shows that
\[
\phi( U ) = U \otimes m_U, \qquad %
\phi( V ) = V \otimes m_V \qquad \text{and} \qquad %
\phi( Z ) = Z \otimes m_Z,
\]
where
\[
m_U = \begin{bmatrix}
-\hlf | c_1 |^2 & -\overline{c_1} \\[1ex]
 c_1 & 0
\end{bmatrix}, \qquad %
m_V = \begin{bmatrix}
-\hlf | c_2 |^2 & -\overline{c_2} \\[1ex]
 c_2 & 0
\end{bmatrix} \qquad \text{and} \qquad %
m_Z = \begin{bmatrix}
 c_1 \overline{c_2} - \overline{c_1} c_2 & 0 \\[1ex]
 0 & 0
\end{bmatrix}.
\]
Hence
\[
\phi_n( U ) = U \otimes m_U^{\otimes n}, \qquad %
\phi_n( V ) = V \otimes m_V^{\otimes n} \qquad \text{and} \qquad %
\phi_n( Z ) = Z \otimes m_Z^{\otimes n}
\]
for all $n \in \Z_+$, so $U$, $V$, $Z \in \alg_\phi$ and
$\alg_\phi = \alg_0$, by Corollary~\ref{cor:bound}.
\end{proof}

The following theorem is an algebraic version of the result
presented by Hudson and Robinson in \cite[Section~4]{HuR88}.

\begin{theorem}
Let $\alg$ be as in Definition~\ref{dfn:ura} and $\phi$ as in
Lemma~\ref{lem:ura}. There exists an adapted family $j$ of unital
$*$-homomorphisms from $\alg$ to $\bop{\ini \ctimes \fock}{}$ such
that
\[
\langle u \evec{f}, j_t( x ) v \evec{g} \rangle = %
\langle u \evec{f}, ( x v ) \evec{g} \rangle + %
\int_0^t \langle u \evec{f}, j_s\bigl( %
\phi^{\wh{f}( s )}_{\wh{g}( s )}( x ) \bigr) v \evec{g} %
\rangle \std s
\]
for all $u$,~$v \in \ini$, $f$,~$g \in \elltwo$, $x \in \alg_0$ and
$t \ge 0$.
\hfill$\Box$
\end{theorem}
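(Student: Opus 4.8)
The plan is to obtain this exactly as Theorem~\ref{thm:nctcgs} was obtained: as an immediate application of Theorem~\ref{thm:extension2}, with all the substantive work already carried out in Lemma~\ref{lem:ura}.

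First I would check that the universal rotation algebra $\alg$ of Definition~\ref{dfn:ura} fits the framework of Theorem~\ref{thm:extension2}. It is the universal $C^*$~algebra on generators $\{ U, V, Z \}$ subject to the three commutation relations of Definition~\ref{dfn:ura} together with the unitarity relations $U^* U - 1 = U U^* - 1 = 0$, and likewise for $V$ and $Z$; in particular the polynomials $X_i^* X_i - 1$ (with $i$ ranging over $\{ 1, 2, 3 \}$, indexing $U$, $V$, $Z$) occur among the defining relations, so $\alg$ is generated by unitaries, \emph{a fortiori} by isometries, in the sense required, and the $*$-algebra $\alg_0$ generated by $\{ U, V, Z \}$ is the appropriate dense subalgebra. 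Second, Lemma~\ref{lem:ura} supplies the flow generator $\phi$ together with the key equality $\alg_\phi = \alg_0$; recall that the latter rests on the fact that $\phi( U ) = U \otimes m_U$, $\phi( V ) = V \otimes m_V$ and $\phi( Z ) = Z \otimes m_Z$, so that $\phi_n$ grows only geometrically on the $*$-generating set $\{ U, V, Z \}$ and Corollary~\ref{cor:bound} applies.

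With these two ingredients, Theorem~\ref{thm:extension2} produces, for each $t \ge 0$, a unital $*$-homomorphism $\jt_t : \alg \to \bop{\ini \ctimes \fock}{}$ that agrees with the operator $j_t( x )$ of Theorem~\ref{thm:intrep} on $\ini \otimes \evecs$ for every $x \in \alg_0$; writing $j := \jt$, adaptedness of $j$ is inherited from Theorem~\ref{thm:intrep}, and the displayed integral equation is precisely~\eqref{eqn:wqsde} restricted to $x \in \alg_0$. I do not expect any genuine obstacle here: the only non-routine point is the growth bound $\alg_\phi = \alg_0$, and this has already been established in Lemma~\ref{lem:ura}. If desired, one could instead invoke Theorem~\ref{thm:cocycle} to upgrade $j$ to a Feller cocycle dilating the quantum Markov semigroup generated by the closure of the Bellissard map $\tau$, but this strengthening is not needed for the statement as given.
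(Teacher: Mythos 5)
Your proposal is correct and is exactly the argument the paper intends: the theorem is stated with no written proof precisely because it follows immediately from Theorem~\ref{thm:extension2} (the universal rotation algebra being the universal $C^*$~algebra generated by the unitaries $U$, $V$, $Z$) combined with the fact $\alg_\phi = \alg_0$ established in Lemma~\ref{lem:ura}, mirroring how Theorem~\ref{thm:nctcgs} was derived for the non-commutative torus.
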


\section*{Acknowledgements}
ACRB thanks Professors Kalyan Sinha and Tirthankar Bhattacharyya for
hospitality at the Indian Institute of Science, Bangalore, and in
Munnar, Kerala; part of this work was completed during a visit to
India supported by the UKIERI research network \emph{Quantum
Probability, Noncommutative Geometry and Quantum Information}. Thanks
are also due to Professor Martin Lindsay for helpful discussions.
Funding from Lancaster University's Research Support Office and
Faculty of Science and Technology is gratefully acknowledged.

SJW thanks Professor Rolando Rebolledo for a very pleasant visit to
Santiago in 2006 where thoughts about the quantum exclusion process
were first encouraged.

Both authors are indebted to the two anonymous referees and the
associate editor for their constructive comments on an earlier draft
of this paper.

\section{References}

\end{document}